%% file: main.tex
\documentclass[10pt, a4paper]{article}

\input{macros}

\usepackage[normalem]{ulem}
\begin{document}

% \title{Time-uniform Last-iterate SGD under a Sub-Gaussian Assumption}
\title{The Exact Time-Uniform Rate Frontier for Stochastic Gradient Descent on Smooth Convex Objectives}

\author{Ruijie Li \thanks{School of Data Science, Fudan University. Email: \texttt{24300720157@m.fudan.edu.cn}}
\and Kang Chen \thanks{Shanghai Center for Mathematical Sciences, Fudan University. Email: \texttt{kchen26@m.fudan.edu.cn}} \and Tianyu Wang \thanks{Shanghai Center for Mathematical Sciences, Fudan University. Email: \texttt{wangtianyu@fudan.edu.cn}}
}

\maketitle

\input{sections/abstract.tex}

\input{sections/intro-new}

\input{sections/preliminaries.tex}

\input{sections/sufficiency.tex}

\input{sections/necessity.tex}

\input{sections/conclusion.tex}

\input{sections/AI-statement}

\input{sections/Reproducibility-statement}

\bibliographystyle{unsrtnat}
\bibliography{references}

\appendix

\input{sections/Appendix-A-Deferred-proofs-from-section-2}

\input{sections/Appendix-B-Proofs-and-auxiliary-results-for-sufficiency}

\input{sections/Appendix-C-auxiliary-results-necessity}

\end{document}

%% file: macros.tex
\usepackage[left=24mm,right=24mm,top=22mm,bottom=22mm]{geometry}
\usepackage{amsmath,amsthm,amssymb}
\usepackage{tcolorbox}
\usepackage{graphicx}
\usepackage[utf8]{inputenc}
\usepackage{listings}
\usepackage{enumitem}
\usepackage[linesnumbered,ruled,vlined]{algorithm2e}
\usepackage{float}
\usepackage[numbers]{natbib}
\usepackage[colorlinks=true, linkcolor=blue, urlcolor=blue, citecolor=blue]{hyperref}

\newtheorem{theorem}{Theorem}[section]
\newtheorem{corollary}[theorem]{Corollary}
\newtheorem{lemma}[theorem]{Lemma}

\newtheorem{definition}[theorem]{Definition}

\newtheorem{remark}[theorem]{Remark}

%% file: sections/abstract.tex
\begin{abstract}
    % We study the time-uniform convergence of the raw iterate of standard stochastic gradient descent for unconstrained smooth convex objectives.
    % Under conditionally unbiased norm-sub-Gaussian gradient noise, we ask which overhead profiles can be achieved by a single deterministic stepsize schedule that is fixed before the
    % run, independent of both the stopping horizon and the confidence level. For every positive, eventually
    % nondecreasing sequence $h$ satisfying \(h(n) = o(\sqrt{n})\), we prove an exact characterization:
    % a dimension-free bound of order $h(n)/\sqrt{n}$, holding simultaneously for all $n$ with probability at least $1-\alpha$ and uniformly over the problem class, is achievable if and only if
    % \[
    %     \sum_{j = 1}^{\infty} \frac{1}{h(2^j)^2} < \infty.
    % \]
    % Thus, \((\log n)^{1/2+\varepsilon} / \sqrt{n}\) is achievable for every \(\varepsilon > 0\), whereas
    % \(\sqrt{\log n / n}\) is not. \textcolor{red}{The time-uniform convergence rate gets arbitrarily close to \(\sqrt{\log n / n}\) but never reaches it.} 
    % The constructive sufficiency result follows from a dyadic horizon-free schedule together with an additive conditional-restart inequality. The necessity counterpart applies to every deterministic nonnegative schedule and holds even for a one-dimensional analytic smooth convex objective with Gaussian noise.

    We study the time-uniform convergence of the raw iterate of standard stochastic gradient descent (SGD) for unconstrained smooth convex objectives. We prove that, under standard noise assumptions, the time-uniform convergence rate gets arbitrarily close to \(\sqrt{\log n / n}\) but never reaches it. More specifically, we prove that for every positive, eventually
    nondecreasing sequence $h$ satisfying \(h(n) = o(\sqrt{n})\), 
    a bound of order $h(n)/\sqrt{n}$, holding simultaneously for all $n$ with probability at least $1-\alpha$ and uniformly over the problem class, is achievable if and only if 
    \[ 
        \sum_{j = 1}^{\infty} \frac{1}{h(2^j)^2} < \infty. 
    \] 
    The constructive sufficiency result follows from a dyadic horizon-free schedule together with an additive conditional-restart inequality. The necessity counterpart applies to every deterministic nonnegative schedule and holds even for a one-dimensional analytic smooth convex objective with Gaussian noise.
    
\end{abstract}

%% file: sections/intro-new.tex
\section{Introduction}

Stochastic gradient descent (SGD), dating back to at least the stochastic approximation algorithms \citet{RobbinsMonro1951}, finds important applications in modern learning \citep[See e.g.][ for a review]{bottou2018optimization}. 
 In its simplest form, SGD iterates as 
 \begin{align}\label{eq:intro-sgd} 
    x_{t+1}=x_t-\eta_tG(x_t,\xi_t), 
\end{align}
where $G(x_t,\xi_t)$ is an estimate of $\nabla f(x_t)$, and $f$ is the objective function of interest. 

% Here is a polished rewrite with improved clarity, flow, and academic tone:

% ---

% Throughout the years, many researchers have contributed to this topic \citep[e.g.,][ and references therein]{lan2020first,liu2024revisiting,2506.23335,doi:10.1287/stsy.2022.0097, 10.1214/24-AAP2143}. 
Among many types of convergence guarantees, time-uniform guarantees are especially valuable for SGD, because they remain valid under data-dependent stopping. This property is crucial in practice, as stochastic algorithms are often terminated using dynamic, data-dependent rules. 
% A prominent example is early stopping in large-scale machine learning, where training is halted when validation performance stagnates or worsens—a widely used heuristic to mitigate overfitting \citep[e.g.,][]{prechelt2002early,dodge2020fine}.
From a theoretical perspective, such termination criteria can be formalized as stopping times. 
% Consider, for instance, a simple rule for an iterative stochastic algorithm:  
% \begin{align}  
%     \tau := \min \{ k \in \mathbb{N} : \| x_k - x_{k-1} \| \le \varepsilon \}, \label{eq:toy}  
% \end{align}  
% where $ x_k $ denotes the iterate at step $ k $, and $ \varepsilon > 0 $ is a fixed tolerance.
Crucially, the stopping time is inherently random, reflecting the stochastic nature of the iterates. A fixed-time guarantee at a predetermined iteration does not directly justify such a stopping rule. To obtain this type of convergence guarantee, we seek to establish bounds of the form: 
\begin{align}\label{eq:intro-time-uniform} \mathbb{P}\left(f(x_n)-f^*\leq b_{\alpha}(n),\; \forall n\geq1\right)\geq1-\alpha, \quad \forall \alpha \in (0,1), 
\end{align} 
where \(b_{\alpha}: \mathbb{N} \to \mathbb{R}_{+}\) is a boundary depending on \(\alpha\).
Unlike a collection of pointwise high-probability bounds, \eqref{eq:intro-time-uniform} is one event over the infinite trajectory and hence remains valid at every almost surely finite data-dependent stopping time. This viewpoint is closely related to confidence sequences in sequential inference \citep{darling1967confidence, lai1976confidence,HowardEtAl2021} in statistical inference. 
% , but our target is a deterministic envelope for each raw SGD iterate rather than an observable, trajectory-dependent certificate. 

We study this question for standard SGD on unconstrained smooth convex objectives over real Hilbert spaces under conditionally unbiased norm-sub-Gaussian noise. We ask which profiles \(h\) permit \eqref{eq:intro-time-uniform} with \(b_{\alpha}(n) = \mathcal{O}(h(n)/\sqrt n)\) under one deterministic horizon- and confidence-independent schedule, uniformly over the problem class and without dimension dependence.

\subsection{Contributions}\label{sec:intro-contributions}

    Our main result characterizes exactly every achievable overhead profile \(h\).

    % In this paper, we prove the exact order of $h$ in \eqref{eq:intro-time-uniform}, as stated in the following theorem. 

    \begin{theorem}[Exact profile frontier, informal]\label{thm:intro-frontier}
        Let $h:\mathbb{N}_+\to(0,\infty)$ be an eventually nondecreasing profile satisfying $h(n)=o(\sqrt{n})$. There exists a deterministic, confidence-independent, horizon-free stepsize schedule for the SGD recursion \eqref{eq:intro-sgd} on smooth convex objectives whose iterates satisfy \eqref{eq:intro-time-uniform} with \(b_{\alpha}(n) = \mathcal{O}(h(n) / \sqrt{n})\), if and only if 
        % simultaneously for all $n$ with probability at least $1-\alpha$ and uniformly over the problem class, if and only if
        \begin{align}\label{eq:intro-frontier}
            \sum_{j=1}^{\infty}\frac{1}{h(2^j)^2}<\infty. 
        \end{align}
         % Furthermore, there is no asymptotically smallest profile $h$. 
    \end{theorem}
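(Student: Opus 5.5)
The proof has two directions: a construction for sufficiency and a one-dimensional lower bound for necessity. Both rest on dyadic blocking of time, and in both the summability condition \eqref{eq:intro-frontier} comes from a Borel--Cantelli-type union over blocks $[2^j,2^{j+1})$.

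\textbf{Sufficiency.} I would build a horizon-free schedule that is constant on dyadic blocks, with $\eta_t = c\,\lambda_j/\sqrt{2^j}$ for $t\in[2^j,2^{j+1})$. The weights $\lambda_j$ are tuned to $h$; the natural choice is $\lambda_j\asymp h(2^j)$, capped so that $\eta_t\le 1/L$. Inside block $j$ I would use the standard last-iterate analysis of constant-step SGD on $L$-smooth convex $f$, restarted conditionally on $x_{2^j}$. This is the "additive conditional-restart inequality": conditionally on $\mathcal F_{2^j}$, one has
\[
f(x_n)-f^*\le \frac{\|x_{2^j}-x^*\|^2}{\eta\,(n-2^j)} + \eta\sigma^2 + (\text{martingale term}).
\]
For early $n$ in a block, where $n-2^j$ is small, I would instead compare with the previous block.

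The martingale terms are sums of norm-sub-Gaussian increments. Their deviations inside block $j$ are controlled by a maximal (Freedman/Pinelis-type, dimension-free) inequality at confidence level $\alpha_j$. The block deviation scales like $\sigma\sqrt{\log(1/\alpha_j)}/\sqrt{2^j}$, and this must be at most $h(2^j)/\sqrt{2^j}$. We therefore need $\log(1/\alpha_j)\lesssim h(2^j)^2$, i.e.\ $\alpha_j=\exp(-c\,h(2^j)^2)$ up to scaling.

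At this point there is a subtlety. Summability of $\exp(-c h^2)$ is weaker than $\sum 1/h^2<\infty$, so a naive union bound would give a better rate. The real constraint must come from the distance term. Since $\|x_{2^j}-x^*\|^2$ has to stay bounded along the trajectory, one needs $\sum_t \eta_t^2\sigma^2 \asymp \sum_j 2^j\lambda_j^2/2^j=\sum_j\lambda_j^2<\infty$. The optimization error is about $1/(\eta n)\asymp 1/(\lambda_j\sqrt{2^j})$. Balancing gives $\lambda_j=1/h(2^j)$, which makes both error terms $\lesssim h(2^j)/\sqrt{2^j}$ and makes the distance stay bounded exactly when $\sum_j 1/h(2^j)^2<\infty$.

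I would therefore set $\lambda_j = 1/h(2^j)$. I would then control $\sup_t\|x_t-x^*\|^2$ with a time-uniform supermartingale bound (Ville's inequality) on $\|x_t-x^*\|^2-\sum_{s<t}\eta_s^2\sigma^2$ plus its sub-Gaussian martingale part. This uses $\alpha$ only once and is independent of the schedule.

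\textbf{Necessity.} I would take $f(x)=\frac{L}{2}x^2$ on $\mathbb R$ (or a gentle analytic convex variant) with Gaussian noise $G=f'(x)+\sigma\xi$. The iterates then form a linear Gaussian AR process, $x_{t+1}=(1-L\eta_t)x_t-\eta_t\sigma\xi_t$. Suppose a schedule achieves $b(n)=C h(n)/\sqrt n$ with $\sum 1/h(2^j)^2=\infty$. I would then derive a contradiction in three steps.

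First, from a fixed-time lower bound, the variance injected near time $n$ is at least about $\sum_{s\text{ recent}}\eta_s^2$, and the bias from a far initial point decays like $\exp(-L\sum\eta_s)$. Together these force $\eta$ on block $j$ to be $\gtrsim 1/(h(2^j)\sqrt{2^j})$ up to constants.

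Second, the fresh noise injected in block $j$ has variance $v_j\gtrsim 1/h(2^j)^2$ relative to the threshold scale. The blocks are nearly independent Gaussians, and for blocks spaced far enough the contraction decorrelates them. With $\sum_j v_j=\infty$, a second Borel--Cantelli/Kolmogorov three-series argument, or a law of the iterated logarithm for Gaussian sums, shows that $f(x_n)$ exceeds $C h(n)/\sqrt n$ infinitely often with probability one. This contradicts the time-uniform bound.

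The main obstacle is this necessity step. It requires turning the divergence of $\sum 1/h(2^j)^2$ into infinitely many exceedances uniformly over every nonnegative deterministic schedule, including schedules that vary wildly within blocks. I would handle it by a case split. If $\sum_t \eta_t^2=\infty$ over the relevant blocks, the accumulated variance is unbounded and Gaussian tail estimates give exceedances. Otherwise the steps are too small, and the deterministic bias from a suitable initialization (allowed by the uniformity over the problem class) dominates.
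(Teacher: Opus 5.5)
The necessity half has a genuine gap: neither the witness nor the probabilistic mechanism can produce the contradiction.

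On $f(x)=\frac L2x^2$, or any fixed quadratic, the contraction $1-L\eta_t$ prevents accumulation across blocks. Take $\eta_t=c/(2^{j/2}h(2^j))$ on $[2^j,2^{j+1})$ with $h(2^j)^2=j\log j$, so that $\sum_jh(2^j)^{-2}=\infty$. Then $x_n$ is Gaussian with variance of order $\sigma^2\eta/L$, where $\eta$ is the current or previous block's step, plus a mean decaying like $\mathrm e^{-LS_n}$. So $f(x_n)$ sits a factor $\asymp h(2^j)^{-2}$ below $Bh(n)/\sqrt n$. A union bound over the $2^j$ times of block $j$ gives failure probability $\lesssim 2^j\exp(-cB\,j\log j/\sigma^2)$, which is summable and below $\alpha$ once $B$ is large. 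Thus this schedule meets the time-uniform bound on your instance. It is excluded only by horizon-adapted instances, since here $p_j\asymp 1/h_j$ while $\mathcal Q_j\to\infty$.

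The same computation shows why Step 2 fails. With normalized block variance $v_j\asymp h_j^{-2}$, the second Borel--Cantelli lemma sees $\sum_j\mathrm e^{-c/v_j}=\sum_j\mathrm e^{-ch_j^2}$, not $\sum_jv_j$. This is the very mismatch you noticed in the sufficiency part. After decorrelation nothing accumulates, so a three-series or LIL argument has nothing to act on. Likewise, ``$\sum_t\eta_t^2=\infty$ implies exceedances'' ignores the drift. (Also, noise $\sigma\xi$ with $\xi\sim\mathcal N(0,1)$ violates $\mathbb E\exp(\|Z\|^2/\sigma^2)\le\mathrm e$; its variance must be scaled down, e.g.\ to $\vartheta\sigma^2$.)

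The missing idea is that the series enters through a deterministic recurrence on the schedule. Use curvature adapted to the horizon, $\lambda_n\asymp\min\{L,1/S_n\}$ (the flat-active family $\varphi_{\lambda,a}$ with $a\downarrow0$), rather than the fixed-$L$ bias $\mathrm e^{-LS_n}$. Then Lemmas~\ref{lem:deterministic-progress-obstruction} and~\ref{lem:stochastic-energy-obstruction} give $p_j=S_{N_j}/\sqrt{N_j}\gtrsim1/h_j$ and $p_j\gtrsim\mathcal Q_j/h_j$. These bounds are cumulative, not the per-block lower bound on $\eta$ you assert. Cauchy--Schwarz on each dyadic block gives $\mathcal Q_{j+1}-\mathcal Q_j\ge(\sqrt2p_{j+1}-p_j)^2$, hence $\sum_jp_j^2\lesssim Q_\infty$. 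That settles your case $\sum_t\eta_t^2<\infty$.

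The case $\sum_t\eta_t^2=\infty$ is where the feedback $p_j\gtrsim\mathcal Q_j/h_j$ is essential. It yields the Riccati inequality of Lemma~\ref{lem:energy-amplification} and $1/T_J-1/T_{J+1}\ge\kappa/(2h_J^2)$, whose divergent sum is the contradiction. The paper controls $\kappa T_J/h_J^2$ through its epoch ceiling (Lemma~\ref{lem:dyadic-epoch-ceiling}), proved by block anti-concentration on the quartic-flat witness started at its minimizer. For that step, note that $p_j^2\le\mathcal Q_j\lesssim p_jh_j$ already gives $\mathcal Q_j\lesssim h_j^2$, which is all it needs.

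Your sufficiency outline follows the paper's route: the same schedule $\gamma_j\asymp R/(\sigma\sqrt{N_j}h_j)$ with total energy $\asymp\sum_jh_j^{-2}$, a Ville-type radius bound for the whole run, and conditional restarts whose early positions are handed off from the previous epoch. However, the martingale step would not work as described. In a last-iterate inequality the directional noise terms carry weights of order $1/(r-t+1)$; in the paper these are the coefficients $w_tv_{t-1}$ with $w_t=r-t+1$. Bounding the increments by the global radius therefore leaves a deviation of order $\sigma\sqrt{B_{\alpha/2}\log(1/\delta)}$ that does not decay in $r$. The scale $\sigma\sqrt{\log(1/\alpha_j)/2^j}$ you quote is the averaged-iterate scale. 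The paper instead absorbs the quadratic variation into the negative remainder $-\sum_t\frac{v_{t-1}}{2\gamma}\|d_t\|^2$ (Lemma~\ref{lem:inner-product-martingale}) and obtains $\sigma^2\gamma(6\mathsf H_r+9\log(1/\delta))$. Your displayed restart inequality also drops the factor $\mathsf H_r$.

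Moreover, the weights and moving comparators depend on the terminal position, so one maximal inequality per block does not cover every $n$. The paper takes a union bound over all $2^j$ positions, which adds $O(j)$ to $\log(1/\delta)$. This is affordable only because the dependence is additive and $j/h_j^2\to0$ (Lemma~\ref{lem:summability-epoch}); a product $\mathsf H_r\log(1/\delta)$ would cost $j^2/h_j^2$.
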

    
    In particular, a time-uniform bound of order $\mathcal{O} ( ( \log n)^{(1+\varepsilon)/2} / \sqrt{n} )$ is achievable for every $\varepsilon>0$, whereas $\mathcal{O} \left( \sqrt{ \log n / n  } \right) $ is not. In addition, the time-uniform bound gets arbitrarily close to the order of $ \sqrt{\log n / n } $ through progressively slower iterated-logarithmic overheads, but never reaches it. 

    The formal version of our main result appears in Theorem~\ref{thm:main}.  This criterion is exact at the level of arbitrary slowly varying profiles, not merely a prescribed logarithmic power. 
    % It shows that ; admissible iterated-logarithm profiles approach the boundary still more closely. 
    % In Corollary \ref{cor:no-smallest-profile}, we further prove that there is no asymptotically smallest profile $h$ -- meaning that the time-uniform bound gets arbitrarily close to the order of $ \sqrt{ \frac{\log n}{n} } $ but never reaches it. The meaning of no asymptotically smallest profile is in Definition \ref{def:achievable-profile}.  
    Corollary \ref{cor:no-smallest-profile} establishes that no asymptotically smallest profile $h$ exists. %Specifically, while the time-uniform bound can approach the order of $\sqrt{\log n / n}$ arbitrarily closely, it never attains this rate. 
    Also, the Cauchy condensation test shows that \eqref{eq:intro-frontier} is equivalent to \(\sum_{n=1}^{\infty} (n h(n)^2)^{-1} < \infty\).
    A machine-checked Lean 4 formalization of the targeted mathematical results can be found in \url{https://github.com/Su-Zi-Zhan/Time-Uniform-SGD-Formalization}.
    
    % One readily checks that, for eventually nondecreasing $h$, condition \eqref{eq:intro-frontier} is equivalent to the integral test 
    % $$\int_M^\infty \frac{\mathrm{d}t}{t h(t)^2}<\infty, $$ 
    % where $M\geq 2$ is sufficiently large. Thus the dyadic base is inessential, and no pointwise smallest achievable overhead exists.

    % The sufficiency result is constructive: it gives one schedule, independent of both the stopping horizon and the confidence level, whose raw iterates satisfy a single time-uniform bound. The matching necessity result applies to every deterministic nonnegative schedule and already holds on one-dimensional analytic smooth convex instances with Gaussian noise. Hence the frontier is not caused by non-smoothness, high dimension, projection, or adversarial state-dependent noise.

\subsection{Related works}

    For non-smooth stochastic optimization, standard polynomial stepsizes incur logarithmic losses in raw-iterate convergence \citep{ShamirZhang2013}, and \citet{HarveyEtAl2019} showed that such losses are unavoidable for the schedules they considered while establishing corresponding high-probability guarantees. Horizon-dependent schedules can recover the optimal rates \citep{JainEtAl2019}, whereas modified methods such as FTRL-based momentum attain the optimal \(\mathcal O(T^{-1/2})\) expected last-iterate rate without knowing \(T\) \citep{LiLiuOrabona2022}.

    In deterministic non-smooth optimization, \citet{ZamaniGlineur2025} used a moving-comparator argument to derive exact worst-case last-iterate bounds and showed that no universal stepsize sequence attains the optimal guarantee at every horizon. Their moving-comparator argument terminalizes weighted one-step inequalities. \citet{LiuZhou2024} extended this mechanism to stochastic composite mirror descent, attaining the optimal fixed-horizon high-probability raw-iterate scale for smooth convex objectives under sub-Gaussian noise. We retain this terminalization mechanism but redesign the remaining-time weights for conditional restarts. Whereas their nonincreasing weights use a non-positive Bregman remainder to absorb directional martingale variation, ours separate the harmonic and confidence costs, replacing $\mathsf{H}_r(1+\log(1/\delta))$ by $\mathsf{H}_r+\log(1/\delta)$. This additive separation enables simultaneous control over infinitely many epochs and terminal positions.

    % In the deterministic non-smooth setting, \citet{ZamaniGlineur2025} derived exact worst-case last-iterate bounds for projected subgradient methods and a horizon-dependent linearly decaying schedule attaining the black-box lower bound, while ruling out a universal sequence that achieves this exact guarantee at every horizon. Their moving-comparator construction terminalizes a weighted family of one-step inequalities. \citet{LiuZhou2024} extended this mechanism to stochastic composite mirror descent and obtained the optimal fixed-horizon high-probability last-iterate scale for smooth convex objectives under sub-Gaussian noise; their schedule depends on the prescribed horizon, and its optimized tuning also depends on the confidence level. Our argument retains this terminalization mechanism but uses remaining-time weights tailored to conditional restarts. Unlike the nonincreasing auxiliary weights of \citet{LiuZhou2024}, whose non-positive Bregman remainder absorbs directional martingale variation, our weights separate the harmonic and confidence costs, yielding the additive dependence $\mathsf{H}_r+\log(1/\delta)$ instead of $\mathsf{H}_r(1+\log(1/\delta))$. This separation is essential for controlling infinitely many epochs and terminal positions.

    Recent results provide important time-uniform benchmarks. 
    For smooth convex objectives, \citet{FengJiangWangYing2023} proved an $\mathcal{O}((1+\log(1/\beta))\log k/\sqrt{k})$ anytime bound for a particular stochastic gradient descent with momentum (SGDM) scheme; later \citep{FengJiangWangYing2025} sharpened this to $(\log k)^{(1+\varepsilon)/2}/\sqrt{k}$, $\varepsilon\in(0,1/2)$, for a variant of SGDM. 
    Under strong convexity or related contractive structure, \citet{ChenFengWang2026} and \citet{PhamRinaldoSarkar2025} established sharp \(\mathcal O((\log\log k+\log(1/\beta))/k)\)-type uniform rates.
    % Under contractive geometry, \citet{ChenFengWang2026} established the tight uniform-in-time rate $\mathcal{O}((\log\log k+\log(1/\beta))/k)$ for smooth strongly convex SGD, with extensions to PL objectives and contractive stochastic approximation with additive noise. Using a quantitative Robbins--Siegmund framework, \citet{PhamRinaldoSarkar2025} obtained the same order for strongly convex and PL SGD under bounded noise, together with applications to Oja's algorithm and Robbins--Monro stochastic approximation. 
    Moreover, \citet{AolariteiJordan2025} derived anytime-valid weighted-suboptimality bounds for projected SGD on general convex domains. These results concern momentum, contractive objectives, or weighted certificates, rather than the raw iterate of standard SGD on the general smooth convex class.
    % For smooth convex objectives, \citet{FengJiangWangYing2023} proved an $\mathcal{O}((1+\log(1/\beta))\log k/\sqrt{k})$ anytime bound for a particular SGDM scheme. Their subsequent stopping-time analysis \citep{FengJiangWangYing2025} developed a large-deviation inequality for almost supermartingales and obtained the sharper family $(\log k)^{(1+\varepsilon)/2}/\sqrt{k}$, for every $\varepsilon\in(0,1/2)$, for a variant of SGDM. Under contractive geometry, \citet{ChenFengWang2026} established the uniform-in-time rate $\mathcal{O}((\log\log k+\log(1/\beta))/k)$ for smooth strongly convex SGD, with tight $\log\log k$ dependence at fixed confidence, and extended the result to PL objectives and contractive stochastic approximation with additive noise. \citet{PhamRinaldoSarkar2025} developed a quantitative Robbins--Siegmund framework for time-uniform concentration of recursive stochastic algorithms, obtaining an $\mathcal{O}((\log\log k+\log(1/\beta))/k)$ rate for strongly convex and PL SGD under bounded-noise conditions, together with applications to Oja's algorithm and Robbins--Monro stochastic approximation. In another direction, \citet{AolariteiJordan2025} constructed anytime-valid bounds for the weighted suboptimality of projected SGD on general convex domains; their fully observable confidence sequence requires a known uniform radius, as on a bounded domain. These results concern momentum schemes, contractive objective classes, or weighted observable certificates, rather than the raw last iterate of standard SGD on the general smooth convex class.

    In the non-smooth Lipschitz setting over bounded convex domains, \citet{KornowskiShamir2026} proved that no fixed infinite stepsize schedule can achieve an anytime raw-iterate rate $o((\log T)^{1/8}/\sqrt{T})$. To the best of our knowledge, no existing work has characterized the exact time-uniform overhead frontier for the raw last iterate of standard SGD over the general smooth convex class when one deterministic schedule must be independent of both the horizon and the confidence level.
    % What remains unresolved for standard SGD in the non-contractive smooth convex regime is the precise overhead forced jointly by the raw last iterate, a confidence-independent horizon-free schedule, and a single infinite-horizon event.

\subsection{Technical overview}\label{sec:intro-technical-overview}

    For sufficiency part, we use dyadic epochs \(N_j := 2^j\) with stepsize \(\gamma_j\asymp R / (\sigma\sqrt{N_j}h(N_j))\) where \(\sigma\) and \(R\) are constants.
    The stochastic energy of this epoch is $N_j\gamma_j^2\asymp R^2/(\sigma^2h(N_j)^2)$, so \eqref{eq:intro-frontier} is exactly the condition that the total squared-step energy be finite. The main analytical device is an additive conditional-restart inequality based on remaining-time weights. It controls every terminal position in an epoch while separating the harmonic and confidence terms, after which a summable allocation of failure probabilities yields one event for all $n$.
    
    To prove the necessity part of Theorem~\ref{thm:intro-frontier}, we consider a class of one-dimensional analytic quartic-flat convex objectives with Gaussian noise. Such objectives separate the two constraints on the schedule: sufficient cumulative steps are required for deterministic progress, while sufficiently small blockwise squared-step energy is needed to suppress stochastic excursions. Independent Gaussian block events convert these constraints into a deterministic Riccati-type recurrence. If \eqref{eq:intro-time-uniform} held while the series in \eqref{eq:intro-frontier} diverged, the recurrence would be forced to remain bounded and diverge simultaneously. Thus the two mechanisms identify the same reciprocal-square frontier. 

% To ensure this kind of convergence guarantee, we aim at prove bounds of the following form

 % Let $\mathcal H$ be a real separable Hilbert space. For minimizing the objective function $f:\mathcal H \to \mathbb R$, starting from an initial point $x_0 \in \mathcal H$, SGD generates 
    
    % where $G(x_t,\xi_t)$ is an estimate of $\nabla f(x_t)$. For general convex stochastic optimization, the canonical stochastic scale is $n^{-1/2}$ \citep{NemirovskiEtAl2009,AgarwalEtAl2012}. Averaging is the classical route to optimality, from asymptotic statistical efficiency to non-asymptotic general-convex guarantees \citep{PolyakJuditsky1992,NemirovskiEtAl2009}, whereas algorithms are commonly reported through the raw last iterate. This distinction is mathematically substantive.

%% file: sections/preliminaries.tex
\section{Preliminaries} \label{sec:preliminaries}

We use the standard separability convention for real Hilbert spaces. Every space in scope is 
finite-dimensional or isomorphic to \( \ell_2 \). Accordingly, let
\( \mathfrak{H}=\{ \mathbb{R}^d:d\in\mathbb{N}_{+} \}
\cup \{ \ell_2 \} \). The problem class is defined as follows.

\begin{definition} \label{def:problem-class}
    Fix \( L, R, \sigma > 0 \). Let $\mathfrak{P}_{\mathrm{cvx}}(L, R, \sigma)$ denote the class of problem instances $\mathcal{P}$ consisting of the following:
    % An admissible problem instance \( \mathcal{P} \in \mathfrak{P}_{\mathrm{cvx}}(L, R, \sigma) \)
    % consists of the following elements:
    \begin{enumerate}[label=(\roman*)]
        \item The ambient space \(\mathcal{H} \in \mathfrak{H}\) is selected before the run. We denote
        the corresponding inner product and norm by \( \langle \cdot, \cdot \rangle \) and
        \( \| \cdot \| \), respectively.
        \item The objective \( f: \mathcal{H} \to \mathbb{R} \) is convex, Fr\'echet differentiable,
        and \( L \)-smooth, with nonempty minimizer set \( X^* \). 
        \item The initial point $x_0\in\mathcal{H}$ satisfies $\operatorname{dist}(x_0,X^*)\leq R$. Fix $x^*\in X^*$ such that $\|x_0-x^*\|\leq R$, and write $f^*:=\min_{x\in\mathcal{H}}f(x)=f(x^*)$. 
        % \item The initial point \( x_0 \in \mathcal{H} \) and a minimizer \( x^* \in X^* \) satisfy
        % \( \|x_0 - x^* \| \leq R \). For brevity, we write \( f^* = f(x^*) \).
        \item Let \( \{ \xi_t \}_{t \geq 0} \) be fresh, independent random seeds, and let
        \( \mathcal{F}_{t} := \sigma (\xi_{0}, \dots, \xi_{t-1}) \) be the pre-query filtration.
        For every \(\mathcal{F}_t\)-measurable query \(x_t\), the oracle returns a stochastic gradient $G(x_t, \xi_t)$. We require this response to be \(\mathcal{F}_{t+1}\)-measurable, and define
        \(Z_t:=G(x_t,\xi_t)-\nabla f(x_t)\). The stochastic gradient oracle is conditionally unbiased
        and norm-sub-Gaussian in the following almost-sure sense:
        \[
            \mathbb{E}[Z_t\mid\mathcal{F}_t]=0, \quad
            \mathbb{E}\left[\left.
                \exp\left(\frac{\|Z_t\|^2}{\sigma^2}\right)
            \right|\mathcal{F}_t\right]\leq\mathrm{e}, \quad \forall t\geq0.
        \]
    \end{enumerate}
    We refer to any $\mathcal{P}\in\mathfrak{P}_{\mathrm{cvx}}(L, R, \sigma)$ as an admissible problem instance.
\end{definition}

For a deterministic infinite schedule \( \eta = \{\eta_{t}\}_{t \geq 0} \) with \( \eta_{t} \geq 0 \), consider the standard SGD \eqref{eq:intro-sgd} and define
\[
    % x_{t+1} = x_{t} - \eta_{t} G(x_{t}, \xi_{t}), \quad 
    \Delta_{t} := f(x_{t}) - f^*.
\]
The schedule is called horizon-free because the full infinite sequence is determined before the run; it is
not retuned for a terminal horizon. More precisely, the schedule is 
\( \eta_{t} = \eta_{t}(h, L, R, \sigma) \).
Our goal is to establish a time-uniform bound for standard SGD, which is given in the following definition. 

\begin{definition} \label{def:time-uniform-valid-boundary}
    % Let \( \mathfrak{P}_{\mathrm{cvx}}(L, R, \sigma) \) be the set of admissible problems defined in
    % Definition~\ref{def:problem-class} for some constants \( L, R, \sigma > 0 \). A stochastic gradient
    % descent method \( \mathcal{M} \) has a time-uniform convergence boundary \( b_{\alpha, \eta}(\cdot):
    % \mathbb{N}_{+} \to \mathbb{R}_{+} \) on \( \mathfrak{P}_{\mathrm{cvx}}(L, R, \sigma) \) with 
    % confidence level \( 1 - \alpha \), if 
    Fix $L,R,\sigma>0$, a confidence level $\alpha\in(0,1)$, and a deterministic infinite stepsize schedule $\eta=\{\eta_t\}_{t\geq0}$ with $\eta_t\geq0$. Let $\mathcal{M}_\eta$ denote the standard SGD method induced by $\eta$ on the problem class $\mathfrak{P}_{\mathrm{cvx}}(L,R,\sigma)$, and let $\mathbb{P}_{\mathcal{P}}^\eta$ denote the law of its trajectory on an instance $\mathcal{P}\in\mathfrak{P}_{\mathrm{cvx}}(L,R,\sigma)$. A deterministic function $b_{\alpha,\eta}:\mathbb{N}_+\to\mathbb{R}_+$ is called a time-uniform convergence boundary for $\mathcal{M}_\eta$ over $\mathfrak{P}_{\mathrm{cvx}}(L,R,\sigma)$ at confidence level $1-\alpha$ if and only if
    \[
        \inf_{\mathcal{P} \in \mathfrak{P}_{\mathrm{cvx}}(L, R, \sigma)} \mathbb{P}_{\mathcal{P}}^{\eta}
        \bigl(\Delta_{n} \leq b_{\alpha, \eta}(n),\; \forall n \geq 1\bigr) \geq 1 - \alpha.
    \]
\end{definition}

This probability concerns one joint event over the entire infinite run, and \( b_{\alpha, \eta} \) must
not depend on the realized sample path.

We now define the class of rate profiles considered in this paper:
\[
    \mathcal{H}_{0} := \{ h: \mathbb{N}_+ \to (0, \infty): h \text{ is eventually nondecreasing 
    and } h(n) = o(\sqrt{n}) \}.
\]
Here, ``eventually nondecreasing'' means that there exists \( n_{0} \in \mathbb{N}_+ \) such that
\( h(n) \) is nondecreasing for all \( n \geq n_{0} \). For any \( h \in \mathcal{H}_{0} \), 
it acts as a multiplicative overhead on the polynomial rate \( \mathcal{O}(1/\sqrt{n}) \), which
is the optimal fixed-horizon rate for SGD on general smooth convex objectives, since we need
\( h(n) \) to elevate a fixed-horizon guarantee to a strictly time-uniform guarantee. %With this terminology in place, we can define the class of achievable profiles.

% \begin{definition} \label{def:achievable-profile}
%     A profile \( h \in \mathcal{H}_{0} \) is called achievable if, for every $L, R, \sigma>0$, there exists one deterministic nonnegative infinite 
%     schedule \( \eta = \eta(h, L, R, \sigma) \), fixed before the run and chosen independently of the horizon, the problem 
%     instance, and the confidence level, such that the raw iterates of $\mathcal{M}_\eta$ satisfy the following property: for every \( \alpha \in (0, 1) \), there exists
%     a constant \( C_{h, \alpha, L, R, \sigma} < \infty \) such that
%     \[
%         \inf_{\mathcal{P} \in \mathfrak{P}_{\mathrm{cvx}}(L, R, \sigma)} \mathbb{P}_{\mathcal{P}}^{\eta}
%         \bigg(\Delta_{n} \leq C_{h, \alpha, L, R, \sigma} \min \bigg\{ 1, \frac{h(n)}{\sqrt{n}} \bigg\},\; 
%         \forall n \geq 1 \bigg)
%         \geq 1 - \alpha.
%     \]
%     We denote the set of all achievable profiles by \(\mathcal{H}_{\mathrm{ach}}\).
% \end{definition}

% \sout{The profile must be selected
% before the run. A different admissible profile may use a different schedule.}

With the problem class and operational constraints formally established, we state our main result. The
following theorem reveals that the boundary between achievable and unachievable overhead profiles is
exactly determined by a reciprocal-square series.

\begin{theorem}[Exact reciprocal-square profile frontier, formal] \label{thm:main}
    For every fixed \( L, R, \sigma > 0 \) and \(h \in \mathcal{H}_{0}\), the following statements are equivalent.
    \begin{enumerate}[label=(\roman*)]
        \item There exists a deterministic infinite schedule \(\eta = \eta(h, L, R, \sigma)\), independent of the horizon, problem instance and confidence level, such that for every \(\alpha \in (0, 1)\), there exists \(C_{h, \alpha, L, R, \sigma} < \infty\) satisfying
        \[
        \inf_{\mathcal{P} \in \mathfrak{P}_{\mathrm{cvx}}(L, R, \sigma)} \mathbb{P}_{\mathcal{P}}^{\eta}
         \bigg(\Delta_{n} \leq C_{h, \alpha, L, R, \sigma} \min \bigg\{ 1, \frac{h(n)}{\sqrt{n}} \bigg\},\; 
         \forall n \geq 1 \bigg)
         \geq 1 - \alpha.
        \]
        \item 
        \[
        \sum_{j=1}^{\infty} \frac{1}{h(2^j)^2} < \infty.
        \]
    \end{enumerate}
    We call profiles satisfying these equivalent conditions achievable, and denote their collection by \(\mathcal{H}_{\mathrm{ach}}\)
\end{theorem}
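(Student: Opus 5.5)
The proof splits into the two implications; I treat (ii)$\Rightarrow$(i) constructively and (i)$\Rightarrow$(ii) by contradiction on a one-dimensional hard instance.

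\textbf{Sufficiency (ii)$\Rightarrow$(i).} Use dyadic epochs $[N_j, N_{j+1})$ with $N_j=2^j$. On epoch $j$ take the constant stepsize
\[
\eta_t=\gamma_j:=\min\Bigl\{\tfrac{1}{2L},\ \tfrac{c\,R}{\sigma\sqrt{N_j}\,h(N_j)}\Bigr\}.
\]
Because $h$ is eventually nondecreasing, (ii) is the statement that the total squared-step energy $\sum_j N_j\gamma_j^2\asymp (R/\sigma)^2\sum_j h(N_j)^{-2}$ is finite.

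First I control the distance $\|x_t-x^*\|$ uniformly in time. Expand $\|x_{t+1}-x^*\|^2$ and use convexity together with the co-coercivity consequence $\eta_t\le 1/(2L)$ of $L$-smoothness. This gives a supermartingale-type recursion whose increments are $\eta_t^2\|Z_t\|^2$ plus a martingale term $-2\eta_t\langle Z_t,x_t-x^*\rangle$. Finite energy plus a Freedman/sub-Gaussian maximal inequality, applied with the self-normalization trick where the martingale variance is bounded by $\sum\eta_t^2\sigma^2\max_s\|x_s-x^*\|^2$, yields the following: with probability $1-\alpha/2$, $\sup_t\|x_t-x^*\|^2\le C_\alpha R^2$.

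Next, within epoch $j$, I use a last-iterate terminalization argument in the spirit of \citet{LiuZhou2024}. The idea is to restart conditionally at the epoch start $x_{N_j}$ and then apply moving comparators $z_s$, with remaining-time weights $1/(n-s+1)$, for each terminal position $n$ in the epoch. This should give, on an event of probability $1-\delta$,
\[
\Delta_n\lesssim \frac{\|x_{N_j}-x^*\|^2}{\gamma_j N_j}+\gamma_j\sigma^2\bigl(\mathsf H_{n-N_j}+\log(1/\delta)\bigr).
\]
The weights must be designed so that the harmonic and confidence costs enter additively. A union bound over the $N_j$ terminal positions costs $\log N_j$, and allocating $\delta_j=\alpha/(4j^2)$ adds $\log j$. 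With $\gamma_j\asymp R/(\sigma\sqrt{N_j}h(N_j))$ the right side is at most
\[
\frac{\sigma R}{\sqrt{N_j}}\Bigl(C_\alpha h(N_j)+\frac{\log N_j+\log(j^2/\alpha)}{h(N_j)}\Bigr).
\]
Since $h(n)^2\gtrsim$ eventually... this is where care is needed. Condition (ii) forces $h(2^j)^2\ge j$ along most $j$ but not pointwise, so the $\log N_j/h(N_j)$ term is not automatically $O(h(N_j))$.

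My first attempt at this obstacle is to avoid the union bound over terminal positions altogether. Instead I would bound $\max_n$ of the weighted martingale directly by a maximal inequality, so that only the harmonic sum $\mathsf H$ appears, which contributes $\gamma_j\sigma^2\log N_j$ deterministically in expectation-like form. I would then absorb that term using the epoch's distance contribution via a different split. If that fails, I would replace $h$ by $\tilde h\ge h$ with $\tilde h(n)\asymp\max\{h(n),\sqrt{\log n}\}$ up to constants, which preserves (ii) only if needed. I expect this step, the simultaneous additive control over infinitely many epochs and terminal positions, to be the main obstacle. The regime $n\le n_0$ and the $\min\{1,\cdot\}$ are handled by the bounded-distance event together with $\Delta_n\le \tfrac{L}{2}\|x_n-x^*\|^2$.

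\textbf{Necessity (i)$\Rightarrow$(ii).} Suppose (i) holds but $\sum_j h(2^j)^{-2}=\infty$. Take $\mathcal H=\mathbb R$, Gaussian noise $Z_t\sim N(0,\sigma_0^2)$, and analytic convex $f$ that is quartic-flat near $0$, say $f(x)=\epsilon\bigl(\sqrt{1+x^4}-1\bigr)$ scaled to be $L$-smooth. The argument then runs in three steps.

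\emph{Deterministic progress.} Instances with minimizer at distance $R$, using a linear-growth region, force $\sum_{t<n}\eta_t\gtrsim \sqrt n/(C h(n))$.

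\emph{Stochastic excursions.} For each block $j$, the Gaussian block increment $\sum_{t\in\text{block }j}\eta_t Z_t$ has variance $\sigma_0^2E_j$, where $E_j=\sum_{t\in\text{block }j}\eta_t^2$. The quartic-flat region has weak restoring drift, so an excursion of size $u$ yields $\Delta\approx u^4$. Independence of the blocks plus Borel--Cantelli (second lemma) then require $\sum_j \mathbb P(\text{excursion}_j)<\infty$, which bounds $E_j$ against $h(N_j)/\sqrt{N_j}$.

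\emph{Contradiction.} Combine the two constraints into a Riccati-type recurrence for $s_j=\sum_{t<N_j}\eta_t$ of the form $s_{j+1}-s_j\le \sqrt{N_jE_j}$ with $E_j$ small. This recurrence forces $s_j$ both bounded (via Cauchy--Schwarz against the energy constraint) and divergent (when $\sum h(2^j)^{-2}=\infty$), which is the desired contradiction.
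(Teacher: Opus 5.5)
\textbf{Necessity.} This direction has a genuine gap. Your deterministic-progress step is sound; it is Lemma~\ref{lem:deterministic-progress-obstruction}. The excursion step fails, however, because it treats a whole dyadic epoch as one block. On the success event, the drift over a stretch of step mass $H$ is only bounded by $L\beta_j^3H/(4R^2)$, where $\beta_j\asymp(R^2\bar\varepsilon_j/L)^{1/4}$ is the admissible radius at level $\bar\varepsilon_j\asymp Bh_j/\sqrt{N_j}$. This drift is $O(\beta_j)$ only when $H\lesssim R^2/(L\beta_j^2)\asymp N_j^{1/4}/\sqrt{h_j}$. Your own progress bound, though, forces step mass of order $\sqrt{N_j}/h_j$, which is larger by a diverging factor. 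So success does not force an epoch's Gaussian increment to be small, and you have no independent events to feed into Borel--Cantelli. The paper instead cuts each epoch into greedy sub-blocks of step mass $\tau_j\asymp R^2/(L\beta_j^2)$, on each of which $|G_k|\le4\beta_j$ is forced. Anti-concentration over the $\gtrsim H_j/\tau_j$ independent sub-blocks of a single epoch then yields the ceiling $H_j\le C_bB\hat h_j\sqrt{N_j}/(\sigma^2 j)$ (Lemma~\ref{lem:dyadic-epoch-ceiling}).

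Your contradiction step also does not go through. The inequality $s_{j+1}-s_j\le\sqrt{N_jE_j}$ is Cauchy--Schwarz in the direction that only caps step mass, and it is linear. Nothing in it makes $s_j$ bounded (indeed $s_j\to\infty$), and $\sum_j h(2^j)^{-2}$ never enters. The missing coupling is the second fixed-horizon constraint, Lemma~\ref{lem:stochastic-energy-obstruction}: on a quadratic of curvature $1/(2S_n)$ with Gaussian noise started at the minimizer, one gets $\varepsilon_\alpha(n)\ge z_\alpha^2\sigma^2Q_n/(64S_n)$, i.e.\ $p_j:=S_{N_j}/\sqrt{N_j}\gtrsim\mathcal{Q}_j/h_j$. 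Combine this with the opposite inequality $\mathcal{Q}_{j+1}-\mathcal{Q}_j\ge H_j^2/N_j=(\sqrt2p_{j+1}-p_j)^2$. The result is the Riccati lower recurrence $\mathcal{Q}_J\ge(2-\sqrt2)\mathcal{Q}_m+c\sum_{j=m+1}^{J-1}\mathcal{Q}_j^2/h_j^2$. The ceiling gives $p_j\lesssim h_j/j$, hence $\mathcal{Q}_j\lesssim h_j^2/j$. This keeps $T_{J+1}=T_J+\kappa T_J^2/h_J^2$ in the regime $1/T_J-1/T_{J+1}\ge\kappa/(2h_J^2)$, and summing contradicts $\sum_j h_j^{-2}=\infty$.

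\textbf{Sufficiency.} Your outline matches the paper's: the same $\gamma_j$, finite energy, a time-uniform radius bound, terminalization, and a summable allocation. It has two holes, though.

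First, the obstacle you single out is not real. Since $h$ is eventually nondecreasing, $\sum_{k=\lfloor j/2\rfloor}^{j}h_k^{-2}\ge jh_j^{-2}/2$, and the left side is a vanishing tail, so $j/h_j^2\to0$ (Lemma~\ref{lem:summability-epoch}). The harmonic and union-bound costs, $O(j)+\log(1/\alpha)$, times $\sigma^2\gamma_j\asymp R\sigma/(\sqrt{N_j}h_j)$, are therefore $O(h_j/\sqrt{N_j})$. Neither workaround replaces this fact: a maximal inequality still leaves $\mathsf H_{N_j}\asymp j$, and passing to $\max\{h,\sqrt{\log n}\}$ yields an $O(h)$ bound only through this very fact.

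Second, and more seriously, your displayed bound with $\gamma_jN_j$ in the denominator is not what a restart at $x_{N_j}$ gives. For $n=N_j+r$ the distance term is $\|x_{N_j}-x^*\|^2/(\gamma_jr)$, which is far too large for small $r$. The paper uses comparator $x^*$ only for $r\ge N_j/2$. For $r<N_j/2$ it takes $y=x_{N_j}$, which kills the distance term, and then adds the bound on $\Delta_{N_j}$ from the late half of epoch $j-1$. That costs only a factor $\sqrt2$ and does not compound.

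Finally, the additive inequality you write as ``should give'' is the technical core. It needs $w_t=r-t+1$, whose unit decrements yield the remainder $-\sum_t v_{t-1}\|d_t\|^2/(2\gamma)$ that absorbs the directional variance. The $\|Z_t\|^2$ coefficients $2\gamma/(w_t+1)$ are handled in a separate supermartingale normalized by their maximum $\gamma$.
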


% In Sections~\ref{sec:sufficiency} and \ref{section:necessity}, we will
% prove Theorem~\ref{thm:main} in two parts, namely sufficiency and necessity.
% The sufficiency part establishes the upper bound by constructing an admissible schedule whenever the reciprocal-square series converges ((ii) \(\implies\) (i), see Theorem~\ref{thm:sufficiency}). The necessity part establishes the matching lower bound by showing that, when the series diverges, no deterministic nonnegative schedule can attain a time-uniform boundary of order $h(n)/\sqrt{n}$ ((i) \(\implies\) (ii), see Theorem~\ref{thm:necessity}). Together, these two directions establish the exact characterization of $\mathcal{H}_{\mathrm{ach}}$.

Sections~\ref{sec:sufficiency} and \ref{section:necessity} proves sufficiency ((ii) \(\implies\) (i), see Theorem~\ref{thm:sufficiency}) and necessity ((i) \(\implies\) (ii), see Theorem~\ref{thm:necessity}). Together, these two directions establish the exact characterization of $\mathcal{H}_{\mathrm{ach}}$.
% \begin{remark}
%     Although Theorem~\ref{thm:main} requires a single schedule independent of the confidence level, the necessity result below is proved in a stronger form: for each fixed $\alpha\in(0,1)$, the schedule is allowed to depend on $\alpha$ (see Theorem~\ref{thm:necessity}). Thus, allowing different deterministic horizon-free schedules at different confidence levels does not enlarge the class of achievable profiles.
% \end{remark}
By Theorem~\ref{thm:main}, we have the following corollary, whose proof is
deferred to Appendix~\ref{appendix:deferred-proofs-section2}.

\begin{corollary} \label{cor:no-smallest-profile}
    The set of achievable profiles \( \mathcal{H}_{\mathrm{ach}} \) has no asymptotically
    smallest element, i.e., if \( h \in \mathcal{H}_{\mathrm{ach}} \), then there exists
    \( g \in \mathcal{H}_{\mathrm{ach}} \) such that \( g(n) = o(h(n)) \). 
\end{corollary}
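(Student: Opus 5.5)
By Theorem~\ref{thm:main}, membership in $\mathcal{H}_{\mathrm{ach}}$ is equivalent to $h\in\mathcal{H}_0$ together with convergence of $\sum_j h(2^j)^{-2}$. The corollary therefore reduces to a statement about series: given such an $h$, we must build $g\in\mathcal{H}_0$ with $g(n)=o(h(n))$ and $\sum_j g(2^j)^{-2}<\infty$. The plan is the classical Abel--Dini construction (for every convergent positive series there is a more slowly convergent one), applied to $a_j := h(2^j)^{-2}$ and then extended from dyadic points to all $n$.

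Write $r_j := \sum_{i\ge j} a_i$. These tails are positive because $h>0$, they are nonincreasing, and $r_j\to 0$. Define $b_j := a_j/\sqrt{r_j}$. Since $a_j = r_j - r_{j+1}$ and $r_{j+1}\le r_j$,
\[
b_j = \frac{r_j-r_{j+1}}{\sqrt{r_j}} \le \frac{(\sqrt{r_j}-\sqrt{r_{j+1}})(\sqrt{r_j}+\sqrt{r_{j+1}})}{\sqrt{r_j}} \le 2\bigl(\sqrt{r_j}-\sqrt{r_{j+1}}\bigr).
\]
This telescopes, so $\sum_j b_j \le 2\sqrt{r_1} < \infty$. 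The candidate profile on dyadic points is $g(2^j) := b_j^{-1/2} = h(2^j)\, r_j^{1/4}$. Then $g(2^j)/h(2^j) = r_j^{1/4}\to 0$, and $\sum_j g(2^j)^{-2} = \sum_j b_j < \infty$.

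To extend to all $n$, set $\phi(n) := r_{\lfloor \log_2 n\rfloor}^{1/4}$ with $r_0 := r_1$, and define $g(n) := h(n)\,\phi(n)$. The factor $\phi$ is positive, nonincreasing in $n$, and tends to $0$. Hence $g>0$, $g(n)=o(h(n))$, and $g(n)\le \phi(1)\, h(n) = o(\sqrt n)$. On dyadic points $g$ agrees with the construction above, so condition (ii) holds for $g$.

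The main obstacle is the remaining requirement of $\mathcal{H}_0$, that $g$ be eventually nondecreasing. The product of a nondecreasing $h$ with a decreasing $\phi$ need not be monotone. The fix I would try is to shrink $\phi$ more gently. Put $\psi(n) := \max\{\phi(n),\, c_n\}$, where $c_n\downarrow 0$ decays slowly enough that $h(n)\psi(n)$ is nondecreasing. A cleaner alternative is to replace $g$ by its running maximum $\tilde g(n) := \max_{n_0\le m\le n} g(m)$. This is nondecreasing, and $\tilde g\ge g$, so dyadic summability must be rechecked and $\tilde g = o(h)$ must be verified. For $m\le n$ we have $g(m)=h(m)\phi(m)\le h(n)\phi(m)$. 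Splitting the maximum at $m$ near $\sqrt n$ or $n/2$ does not immediately give $o(h(n))$, because $h(m)$ can be comparable to $h(n)$ while $\phi(m)$ is not small.

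So I would instead exploit the freedom to slow down the decay of $\phi$. Choose $\phi$ as the tail-based factor above but flattened, so that it decreases only at indices where $h$ increases by at least a proportional amount. Concretely, take $\phi$ piecewise constant with drops only at times where $h$ has at least doubled since the previous drop, and each drop at most by a factor $2$. Then $h\phi$ is nondecreasing. If $h$ stays bounded the construction is easier, since then (ii) fails, so that case never arises. Finally, check that the flattened $\phi$ still has dyadic sum finite. This holds because $\phi_{\mathrm{flat}}\ge \phi$ pointwise, giving $g_{\mathrm{flat}}\ge g$ on dyadic points and so a smaller reciprocal-square sum. It also still tends to $0$, because $h\to\infty$ forces infinitely many doublings.
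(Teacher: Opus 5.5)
Your reduction to a series statement and the Abel--Dini step are correct. The bound $b_j\le 2(\sqrt{r_j}-\sqrt{r_{j+1}})$ makes $\sum_j b_j$ finite, and $g(2^j)/h(2^j)=r_j^{1/4}\to0$; the multiplier $r_j^{-1/2}$ is a slicker substitute for the paper's block-constant multipliers $c_j$. The genuine gap is the last step, eventual monotonicity of $g$, which membership in $\mathcal{H}_0$ requires. You claim that $h\phi$ is nondecreasing when $\phi$ is piecewise constant and drops, by at most a factor $2$, only at times $n_k$ where $h$ has doubled since the previous drop. That claim is false. Doubling only gives $h(n_k)\phi(n_k)\ge h(n_{k-1})\phi(n_{k-1})$, whereas monotonicity at the drop needs
\[
\frac{h(n_k)}{h(n_k-1)}\ \ge\ \frac{\phi(n_k-1)}{\phi(n_k)}.
\]
In other words, $h$ must \emph{jump} at $n_k$ by the drop factor, and nothing forces that. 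Take $h(n)=\log(n+1)$, which satisfies (ii). Then $h(n)/h(n-1)\to1$, so every late drop by a fixed factor makes $h\phi$ decrease. Drops small enough to respect the displayed inequality do not help either: the doubling times satisfy $n_k\ge n_{k-1}^2$, so the allowed factors $1+O(1/(n_k\log n_k))$ have a convergent product, and $\phi$ would no longer tend to $0$. So the sparse-drop product construction cannot be rescued.

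The repair is the running maximum $\tilde g(n)=\max_{n_0\le m\le n}h(m)\phi(m)$ that you discarded. The difficulty you met comes from splitting at $m\approx\sqrt n$ or $n/2$; the split should be at a \emph{fixed} index. Given $\varepsilon>0$, choose $M\ge n_0$ with $\phi(M)\le\varepsilon/2$. For $n\ge M$, monotonicity of $h$ on $[n_0,\infty)$ and of $\phi$ gives
\[
\tilde g(n)\le\max\Bigl\{\max_{n_0\le m<M}h(m)\phi(m),\ \phi(M)\,h(n)\Bigr\}.
\]
The first term is a constant, eventually below $\varepsilon h(n)/2$ because (ii) forces $h(n)\to\infty$. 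Summability follows from $\tilde g\ge g$, and positivity and $\tilde g=o(\sqrt n)$ are immediate. This is the paper's argument carried out at the dyadic level. It sets $g_j=\max_{i\le j}h(2^i)/\sqrt{c_i}$ and proves $g_j=o(h(2^j))$ by the same fixed-index split. It then extends $g$ as a step function constant on each $[2^j,2^{j+1})$, which is nondecreasing automatically and satisfies $g(n)/h(n)\le g_j/h(2^j)$. This avoids ever having to make a product $h\phi$ monotone.
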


To illustrate the corollary, consider the following example. Let \( \ell_k(n) = \overbrace{\log \cdots 
\log}^{k \text{ times}} n \) for all sufficiently large $n$. Each profile below is understood to be extended to $\mathbb{N}_+$ by an arbitrary positive modification on a finite prefix. Then \( \sqrt{\ell_{1}(n)} \) is not achievable, but \( (\ell_{1}(n))^{1/2+\varepsilon} \)
is achievable for any \( \varepsilon > 0 \). Similarly, \( (\ell_1 \cdots \ell_k(n))^{1/2} \) is not 
achievable, but \( (\ell_1 \cdots \ell_{k-1}(n) \cdot \ell_{k}^{1+\varepsilon}(n))^{1/2} \) is achievable
for any \( \varepsilon > 0 \). Let \( h_{k}(n) = (\ell_1 \cdots \ell_k^{1+\varepsilon}(n))^{1/2} \). Then \( h_{k} \in \mathcal{H}_{\mathrm{ach}} \) and \( h_{k+1}(n) = o(h_{k}(n)) \). Hence, there is no asymptotically smallest achievable profile in the family of iterated logarithms, which is consistent with Corollary~\ref{cor:no-smallest-profile}. 
Also, this shows that the time-uniform convergence rate gets arbitrarily close to $ \mathcal{O} ( \sqrt{ \log n / n } ) $ but never reaches it. 

%% file: sections/sufficiency.tex
\section{Sufficiency of the reciprocal-square profile}
\label{sec:sufficiency}

In this section, we prove the constructive sufficiency, or upper-bound, direction of Theorem~\ref{thm:main}: every profile $h\in\mathcal{H}_0$ satisfying $\sum_{j\geq1}h(2^j)^{-2}<\infty$ belongs to $\mathcal{H}_{\mathrm{ach}}$. For arbitrary $L,R,\sigma>0$, we construct a deterministic infinite schedule $\eta(h,L,R,\sigma)$, independent of the horizon, the problem instance, and the confidence level, whose raw SGD iterates satisfy the $\mathcal{O}(h(n)/\sqrt{n})$ bound simultaneously for all $n$ on a single high-probability event.

\subsection{The horizon-free dyadic schedule}

For brevity and clarity, we employ a dyadic epoch-based schedule that
maintains a constant stepsize within each epoch to leverage martingale concentration while systematically decreasing
the stepsize across epochs to drive optimization error to zero.
Let \( N_{j} = 2^j \) and \(h_{j} = h(N_{j})\). Algorithm \ref{alg:schedule} completely specifies our schedule.

\begin{algorithm}
    \caption{Horizon-Free Dyadic Schedule for an Admissible Profile}
    \label{alg:schedule}
    \SetAlgoLined
    \KwIn{A profile \( h \in \mathcal{H}_{0} \) satisfying \( \sum_{j \geq 1} h(2^j)^{-2} < \infty \),
    and parameters \( L, R, \sigma > 0 \).}
    \KwOut{One deterministic infinite schedule \(\eta\) depending only on \( h, L, R, \sigma \).}
    % \( J_{h} := \inf \{ j \in \mathbb{N}_{+}: h \text{ is nondecreasing on } [N_{j}, +\infty) \} \).
    \( J_{h} := \inf \{ j \in \mathbb{N}_{+}: h(n+1)\ge h(n) \text{ for all } n\in\mathbb{N}_+ \text{ with } n\ge N_j \} \).

    \( c_{h} := \max \{ 1, (\sum_{j \geq J_{h}} h_{j}^{-2})^{1/2} \} \).

    \( j_{0} := \inf \{ j \geq J_{h}: h_{j} \geq 1, R/(\sigma c_{h} \sqrt{N_{j}} h_{j}) \leq 1/(2L) \} \).

    \For{\( t = 0, 1, 2, \dots \)}{
        \If{\( t < N_{j_{0}} \)}{
            \( \eta_{t} := 0 \).
        }
        \If{\( N_{j} \leq t < N_{j+1} \) for some \( j \geq j_{0} \)}{
            \( \eta_{t} = \gamma_{j} := \frac{R}{\sigma c_{h} \sqrt{ N_{j} } h_{j}} \).
        }
    }

    \Return{\( \eta = \{ \eta_{t} \}_{t \geq 0} \).}
\end{algorithm}

Algorithm~\ref{alg:schedule} is well defined. Indeed, $\sum_{j\geq J_h}h_j^{-2}<\infty$ implies $h_j\to\infty$, and hence $\sqrt{N_j}h_j\to\infty$. Thus the set defining $j_0$ is nonempty and $j_0<\infty$. Moreover, since $h_j$ is nondecreasing for $j\geq J_h$, the active stepsizes $\gamma_j=R/(\sigma c_h\sqrt{N_j}h_j)$ are nonincreasing.
Crucially, the total squared-step energy of this infinite
sequence is bounded above by \( R^2 / \sigma^2 \), as guaranteed by the convergence of the
reciprocal-square series. 

\begin{remark}
    A constant stepsize is adopted within each epoch primarily for simplicity of the sufficiency proof. Alternative stepsize schedules can also be employed.
\end{remark}

% \textit{Remark}: A constant stepsize is adopted within each epoch primarily for simplicity of the sufficiency proof. Alternative stepsize schedules can also be employed.

\subsection{Core tool: additive conditional restart}

The sufficiency proof repeatedly applies finite-block last-iterate estimates along the dyadic schedule. Since a block may start from an iterate determined by the preceding trajectory, and since the comparator may be \(\mathcal F_m\)-measurable, we need an estimate that remains valid conditionally on \(\mathcal F_m\). We refer to such an application as an analytical restart; the SGD trajectory itself is not reset.

A direct specialization of the high-probability last-iterate bound of \citet{LiuZhou2024} yields a stochastic term of the form \(\sigma^2 \gamma \mathsf{H}_{r} (1 + \log \frac{1}{\delta})\), where \(\mathsf{H}_{r} = \sum_{i=1}^{r} i^{-1}\) is the $r$-th harmonic number. Under repeated confidence allocation, this product incurs an additional logarithmic loss. Using the same barycentric last-iterate reduction, but with the remaining weights \(w_t=r-t+1\), we instead obtain the additive dependence
\(\sigma^2\eta\bigl(\mathsf{H}_r+\log(1/\delta)\bigr)\). The following theorem formalizes this conditional last-iterate estimate.

\begin{theorem}\label{thm:additive-conditional-restart}
    % Fix deterministic integers \( m \geq 0 \) and \( r \geq 1 \). 
    Fix $L,R,\sigma>0$, an instance $\mathcal{P}\in\mathfrak{P}_{\mathrm{cvx}}(L,R,\sigma)$, and a deterministic nonnegative stepsize schedule $\eta$. Let $m\in\mathbb{N}$ and $r\in\mathbb{N}_+$ be deterministic, and suppose that
    \( \eta_{m} = \cdots = 
    \eta_{m+r-1} = \gamma \), \( 0 < \gamma \leq 1/(2L) \). Then for every almost surely finite
    \( \mathcal{F}_{m} \)-measurable comparator \( y \) and every \( \delta \in (0, 1) \),
    \[
        \mathbb{P} \bigg(
            f(x_{m+r}) - f(y) \leq \frac{4D(y, x_{m})}{\gamma r} + 6\sigma^2\gamma \mathsf{H}_{r}
            + 9\sigma^2\gamma \log \frac{1}{\delta} \bigg| \mathcal{F}_{m}
        \bigg) \geq 1 - \delta \quad \text{almost surely,}
    \]
    where $D:\mathcal{H}\times\mathcal{H}\to[0, +\infty)$ is given by $D(a, b) = \|a-b\|^2/2$. 
\end{theorem}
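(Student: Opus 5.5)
The plan follows the barycentric (moving-comparator) last-iterate reduction of \citet{ZamaniGlineur2025,LiuZhou2024}, run conditionally on $\mathcal F_m$ and with the remaining-time weights $w_t=r-t+1$. Conditioning on $\mathcal F_m$ lets me treat $x_m$ and $y$ as fixed. The noises $Z_{m},\dots,Z_{m+r-1}$ remain conditionally centered and norm-sub-Gaussian with respect to $\mathcal F_{m+t}$, because the seeds are fresh. So after shifting indices I assume $m=0$ and $y$ deterministic, write $x_t$ for $x_{m+t}$, and set $g_t=\nabla f(x_t)+Z_t$.

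\textbf{Step 1 (one-step inequality).} For any $\mathcal F_t$-measurable $z$, I combine $L$-smoothness, convexity, and the update $x_{t+1}=x_t-\gamma g_t$ with $\gamma\le 1/(2L)$. This should give
\[
f(x_{t+1})-f(z)\le \frac{D(z,x_t)-D(z,x_{t+1})}{\gamma}+\langle Z_t,z-x_t\rangle+\gamma\|Z_t\|^2 .
\]
The $-\tfrac{\gamma}{2}\|\nabla f(x_t)\|^2$-type slack is absorbed using $1-L\gamma\ge 1/2$, and Young's inequality is applied to the cross term $\gamma^2\langle \nabla f(x_t),Z_t\rangle$. Constants may shift slightly, but the target constants $4,6,9$ leave room for this.

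\textbf{Step 2 (terminalization with weights $w_t=r-t+1$).} Following Zamani--Glineur and Liu--Zhou, I take comparators $z_t$ that are convex combinations of $y$ and the earlier iterates $x_0,\dots,x_t$. The mixing coefficients are chosen from the weights $w_t$ so that, after multiplying the Step 1 inequality at time $t$ by suitable $\lambda_t>0$ and summing over $t$, two things happen. First, the function values telescope to $f(x_r)-f(y)$ through convexity of $f$ at the barycenter. Second, the distance terms telescope to a multiple of $D(y,x_0)/(\gamma r)$ plus nonpositive leftovers of the form $-c_t\|z_{t}-x_{t+1}\|^2/\gamma$. I will keep these leftovers rather than drop them.

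With remaining-time weights, the normalized coefficient in front of step $t$ is of order $1/(r-t)$. Its sum gives the $\mathsf H_r$ in front of $\gamma\sum_t\|Z_t\|^2$, and its structure makes the martingale increments $\xi_t:=a_t\langle Z_t,z_t-x_t\rangle$ carry coefficients $a_t$ matched to the leftover negative terms. The output should be
\[
f(x_r)-f(y)\le \frac{4D(y,x_0)}{\gamma r}+\sum_t a_t\gamma\|Z_t\|^2+\sum_t\Big(\xi_t-\frac{b_t}{\gamma}\|z_t-x_t\|^2\Big),
\]
where $\sum_t a_t\lesssim \mathsf H_r$ and $a_t^2\lesssim b_t$ uniformly.

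\textbf{Step 3 (additive concentration; the main obstacle).} The aim is a single exponential supermartingale with a fixed inverse temperature $\lambda=c/(\sigma^2\gamma)$. Conditional sub-Gaussianity gives
\[
\mathbb E\bigl[e^{\lambda\xi_t}\mid\mathcal F_t\bigr]\le \exp\bigl(C\lambda^2\sigma^2a_t^2\|z_t-x_t\|^2\bigr),
\]
and for this $\lambda$ the right-hand side is dominated by the kept negative term $\lambda b_t\|z_t-x_t\|^2/\gamma$. For the squared-noise term, $\mathbb E[\exp(\|Z_t\|^2/\sigma^2)\mid\mathcal F_t]\le \mathrm e$ together with Jensen's inequality (since $\lambda a_t\gamma\le 1/\sigma^2$) gives
\[
\mathbb E\bigl[\exp(\lambda a_t\gamma\|Z_t\|^2)\mid\mathcal F_t\bigr]\le \exp(\lambda a_t\gamma\sigma^2).
\]
Handling both terms jointly by Cauchy--Schwarz, or by splitting $\lambda$, I obtain a supermartingale. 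The conditional Chernoff/Markov bound then yields the deviation $\lambda^{-1}\log(1/\delta)\asymp\sigma^2\gamma\log(1/\delta)$ on top of the mean $\asymp\sigma^2\gamma\mathsf H_r$. This is additive rather than multiplicative precisely because $\lambda$ does not depend on $t$.

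The delicate point is verifying $a_t^2\lesssim b_t$ with constants uniform in $t$ and $r$ for the weights $w_t=r-t+1$. This is exactly what fails for Liu--Zhou's weights and forces their $\mathsf H_r(1+\log(1/\delta))$ product. I would first check it by computing $a_t,b_t$ explicitly from the telescoping identity and then tune the constant $c$ to land on $6$ and $9$. Finally, the conditional statement for random $\mathcal F_m$-measurable $y$ follows because every step is carried out under $\mathbb P(\cdot\mid\mathcal F_m)$, and a.s. finiteness of $y$ ensures $D(y,x_m)<\infty$.
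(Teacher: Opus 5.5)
Your proposal is essentially the paper's proof. It uses the same one-step inequality, the same barycentric terminalization with remaining-time weights $w_t=r-t+1$ (coefficients $2/(w_t+1)$ on $\gamma\|Z_t\|^2$, and a retained nonpositive remainder that dominates the directional variance because $w_t^2v_{t-1}\le 2$, the first increment being absorbed by relaxing $\frac{2D(y,x_m)}{\gamma(r+1)}$ to $\frac{4D(y,x_m)}{\gamma r}$), and fixed-temperature exponential supermartingales on the filtration $\mathcal{F}_{m+t}$. The only difference is how the two noise terms are combined: the paper runs two separate supermartingales ($\lambda=1/(8\sigma^2\gamma)$ for the directional part, Jensen with maximal coefficient $\gamma$ for the squared norm) and union-bounds them with failure probabilities $\delta/9$ and $8\delta/9$, which is exactly what yields $6$ and $9$, whereas a plain Cauchy--Schwarz merge with the directional bound $\exp(2\sigma^2\lambda^2\|u\|^2)$ only gives $16$ in front of $\log(1/\delta)$ (H\"older with exponents $9$ and $9/8$ would instead give $2\sigma^2\gamma\mathsf{H}_r+9\sigma^2\gamma\log(1/\delta)$).
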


See Appendix~\ref{appendix:additive-conditional-restart} for a complete proof. Theorem~\ref{thm:additive-conditional-restart} supplies the conditional finite-block estimate used in each restart. The time-uniform conclusion will follow by combining it with the global radius control (detailed in Appendix~\ref{appendix:global-radius}) and the dyadic schedule.

\subsection{Proof of sufficiency}

\begin{lemma} \label{lem:summability-epoch}
    If \( \{ h_{j} \}_{j \in \mathbb{N}} \) is positive, eventually nondecreasing, and
    \( \sum_{j=1}^{\infty} h_{j}^{-2} < \infty \), then \( j / h_{j}^{2} \to 0 \) as \( j \to \infty \).
\end{lemma}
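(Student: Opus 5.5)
This is the standard Olivier/Abel-type argument for monotone summable sequences, applied to $a_j := h_j^{-2}$. Since $h_j$ is eventually nondecreasing, there is $J$ with $a_{j+1}\le a_j$ for $j\ge J$; all $a_j$ are positive and $\sum_j a_j<\infty$. The claim $j/h_j^2\to0$ is exactly $j a_j\to 0$. So we only need to show that a nonnegative, eventually nonincreasing, summable sequence satisfies $j a_j\to 0$.

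The key step is a tail estimate over half-intervals. For $j\ge 2J$, monotonicity on $[\lceil j/2\rceil, j]$ gives
\[
    \Bigl(j-\lceil j/2\rceil+1\Bigr)\, a_j \;\le\; \sum_{i=\lceil j/2\rceil}^{j} a_i \;\le\; \sum_{i\ge \lceil j/2\rceil} a_i .
\]
The left-hand coefficient is at least $j/2$. Hence
\[
    0\le j a_j \le 2\sum_{i\ge\lceil j/2\rceil} a_i .
\]
The right-hand side is twice a tail of a convergent series, so it tends to $0$ as $j\to\infty$. This yields $j a_j\to0$, i.e.\ $j/h_j^2\to 0$.

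The only point needing care is making sure the window $[\lceil j/2\rceil, j]$ lies in the monotone range. This is why we restrict to $j\ge 2J$; finitely many initial terms are irrelevant to the limit. Positivity of $h_j$ guarantees that $a_j$ is well defined and nonnegative, so no other obstacle arises.
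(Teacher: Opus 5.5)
Your proof is correct and is essentially the paper's argument: eventual monotonicity gives that $j h_j^{-2}/2$ is at most the block sum over $\lceil j/2\rceil \le i \le j$, which is a tail of a convergent series and so tends to zero. Your restriction to $j \ge 2J$ simply spells out the paper's ``for every sufficiently large $j$.''
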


\begin{proof}
    For every sufficiently large \( j \),
    \(
        \sum_{k = \lfloor j/2 \rfloor}^{j} h_k^{-2} \geq j h_j^{-2} / 2
    \).
    The left-hand side is a tail of a convergent series and tends to zero.
\end{proof}

\begin{theorem}[Sufficiency part of Theorem~\ref{thm:main}] \label{thm:sufficiency}
    Let $h\in\mathcal{H}_0$ satisfy $\sum_{j=1}^{\infty} h(2^j)^{-2} < \infty$. Then, for every $L,R,\sigma>0$, there exists a deterministic nonnegative infinite stepsize schedule $\eta=\eta(h,L,R,\sigma)$, independent of the terminal horizon, the problem instance, and the confidence level, such that, for every $\alpha\in(0,1)$, there exists a constant $C_{h,\alpha,L,R,\sigma}<\infty$ satisfying
    % Let \( h: [1, \infty) \to (0, \infty) \) be eventually nondecreasing and satisfy
    % \( \sum_{j=1}^{\infty} h(2^j)^{-2} < \infty \). Then there is one deterministic infinite schedule
    % \( \eta = \eta(h, L, R, \sigma) \) such that for every \( \alpha \in (0, 1) \), there exists a 
    % finite constant \( C_{h, \alpha, L, R, \sigma} \) for which
    \[
        \inf_{\mathcal{P} \in \mathfrak{P}_{\mathrm{cvx}}(L, R, \sigma)} \mathbb{P}_{\mathcal{P}}^{\eta}
        \bigg(\Delta_{n} \leq C_{h, \alpha, L, R, \sigma} \cdot\min \left\{ 1, \frac{h(n)}{\sqrt{n}} \right\}, \;
        \forall n \geq 1 \bigg)
        \geq 1 - \alpha.
    \]
\end{theorem}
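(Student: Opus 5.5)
We use the schedule $\eta$ of Algorithm~\ref{alg:schedule} and prove the bound on a single event assembled from countably many applications of Theorem~\ref{thm:additive-conditional-restart}, with failure probabilities summing to at most $\alpha$.

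\textbf{Step 1: the prefix.} For $t<N_{j_0}$ we have $\eta_t=0$, so $x_n=x_0$ for all $n\le N_{j_0}$. By $L$-smoothness and $\|x_0-x^*\|\le R$, $\Delta_n\le LR^2/2$ there. Since $j_0$ depends only on $(h,L,R,\sigma)$, these finitely many $n$ are absorbed into the constant, because $\min\{1,h(n)/\sqrt n\}$ is bounded below on this finite set.

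\textbf{Step 2: global radius control.} We need a high-probability event on which $D(x^*,x_{N_j})\le C_\alpha R^2$ for all $j\ge j_0$. The one-step expansion gives
\[
\|x_{t+1}-x^*\|^2\le\|x_t-x^*\|^2-2\eta_t\bigl(1-L\eta_t\bigr)\langle\nabla f(x_t),x_t-x^*\rangle-2\eta_t\langle Z_t,x_t-x^*\rangle+2\eta_t^2\|Z_t\|^2,
\]
using cocoercivity and $\eta_t\le 1/(2L)$ to drop the gradient terms. Two quantities remain.
\begin{itemize}
\item The noise energy $\sum_t\eta_t^2\|Z_t\|^2$. Its conditional mean is at most $\sigma^2\sum_j N_j\gamma_j^2=R^2c_h^{-2}\sum_{j\ge j_0} h_j^{-2}\le R^2$. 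A norm-sub-Gaussian Bernstein/Chernoff bound then controls it by $O(R^2\log(1/\alpha))$.
\item The martingale $\sum_t\eta_t\langle Z_t,x_t-x^*\rangle$. We handle it with a self-normalized / Freedman-type bound in which the variance proxy $\sigma^2\sum\eta_t^2\|x_t-x^*\|^2$ is bounded by $R^2\sup_t\|x_t-x^*\|^2$. A stopping-time argument (stop at the first exit from a radius $KR$) closes the bootstrap.
\end{itemize}
This yields $\sup_t\|x_t-x^*\|^2\le C_\alpha R^2$ with probability $\ge1-\alpha/2$. This is the step I expect to be the main obstacle, since it requires a time-uniform martingale bound. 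The summability of the squared-step energy is exactly what makes it work, and I would cite the appendix's global radius argument for it.

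\textbf{Step 3: per-epoch restarts.} Fix $n\ge N_{j_0+1}$ and let $j$ be the index with $N_j\le n<N_{j+1}$. We want a block of constant stepsize of length $r\asymp N_j$ ending at $n$.
\begin{itemize}
\item If $n-N_j\ge N_{j-1}$, take $m=N_j$, $r=n-N_j$ and stepsize $\gamma_j$.
\item Otherwise the block would straddle two epochs. To avoid this, restart at $m=N_{j-1}$ with $r=N_{j-1}$, which bounds $\Delta$ at $N_j$. Then handle the short tail with $m=N_j$ and comparator $y=x_{N_j}$. This keeps the distance term $\|x_{N_j}-x_{N_j}\|^2=0$, so it gives $f(x_n)\le f(x_{N_j})+O(\sigma^2\gamma_j(\mathsf H_r+\log(1/\delta)))$. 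This works because the theorem allows any $\mathcal F_m$-measurable comparator, and $f(x_{N_j})-f^*$ is already controlled.
\end{itemize}
For the main bounds, take comparator $y=x^*$ and distance $D(x^*,x_m)\le C_\alpha R^2$ from Step 2. Then
\[
\Delta_n\le\frac{4C_\alpha R^2}{\gamma_j r}+6\sigma^2\gamma_j\mathsf H_r+9\sigma^2\gamma_j\log\frac1{\delta_{j,n}}.
\]
Using $\gamma_j r\asymp R\sqrt{N_j}/(\sigma c_h h_j)$, the first term is $O(C_\alpha R\sigma c_h h_j/\sqrt{N_j})$.

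\textbf{Step 4: confidence allocation.} Allocate $\delta_{j,n}=\alpha/(4N_j\cdot 2j^2)$ to each of the $\le N_j$ terminal positions of epoch $j$. The total failure probability is then $\le\alpha/2$ after taking expectations of the conditional bounds. The last two terms are at most
\[
\sigma^2\gamma_j\bigl(O(j)+\log(1/\alpha)\bigr)=\frac{\sigma R}{c_h\sqrt{N_j}h_j}\bigl(O(j)+\log(1/\alpha)\bigr).
\]
By Lemma~\ref{lem:summability-epoch}, $j\le h_j^2$ for large $j$, so $j/h_j\le h_j$ and these terms are $O(h_j/\sqrt{N_j})$. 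The small-$j$ terms go into the constant. Finally $h_j\le h(n)$ by monotonicity for $n\ge N_j\ge N_{J_h}$, and $\sqrt{N_j}\ge\sqrt{n/2}$. This gives $\Delta_n\le C\,h(n)/\sqrt n$. Combined with $\Delta_n\le$ const (from smoothness and the radius bound), this produces the $\min\{1,\cdot\}$ form, uniformly over instances since all constants depend only on $(h,\alpha,L,R,\sigma)$.
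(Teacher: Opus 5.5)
Your proposal is correct and follows essentially the same route as the paper. The shared ingredients are the global radius event of Lemma~\ref{lem:global-radius}; restarts at $m=N_j$ with comparator $x^*$ on the late half of each epoch and comparator $x_{N_j}$ on the early half; the handoff that bounds $\Delta_{N_j}$ by the epoch-$(j-1)$ restart at $r=N_{j-1}$; and a summable confidence allocation whose $\mathcal{O}(j)$ cost is absorbed via Lemma~\ref{lem:summability-epoch}. The remaining differences are cosmetic: your allocation $\alpha/(8N_j j^2)$ versus $\alpha/2^{2(j+1)}$, your slightly different one-step distance recursion, and your absorption of the finitely many indices before $N_{j_0+1}$ through the radius bound $\Delta_n\le LB_{\alpha/2}$.
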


The schedule is the one produced by Algorithm~\ref{alg:schedule}; it depends on \( h, L, R, \sigma \)
alone, and in particular not on \( \alpha \), the horizon, or the instance. The proof has three stages.
First, the finite total energy of the schedule confines the entire infinite trajectory to one fixed
ball. Epoch \( j \) contributes squared-step energy \( N_j \gamma_j^2 = \mathcal{O}(h_j^{-2})\),
so reciprocal-square summability makes the total energy finite. This yields a single high-probability
radius bound for the entire infinite trajectory and makes every active stepsize admissible for
Theorem~\ref{thm:additive-conditional-restart} (Lemma~\ref{lem:schedule-calibration} and
Lemma~\ref{lem:global-radius}).

Second, inside each dyadic epoch, two applications of
Theorem~\ref{thm:additive-conditional-restart} with different comparators control every terminal
position. For late terminal positions, enough steps have elapsed that \( x^* \) can be used directly
as the comparator in Theorem~\ref{thm:additive-conditional-restart}. But for early terminal positions,
since the bound scales like \(1/r\) with the number of steps \(r\), the penalty is too large to reach
the target bound. Instead, we use the starting point \(x_{N_j}\) as the comparator, which eliminates
the penalty term. The missing bound on \( \Delta_{N_j} \) is exactly the late-half bound inherited
from epoch \( j-1 \). Because adjacent dyadic scales differ only by a constant factor, this handoff
propagates one constant across all epochs (Lemmas~\ref{lem:within-epoch} and \ref{lem:epoch-propagation}).

Finally, uniform control of all \( 2^j \) terminal positions in epoch \( j \) requires a summable
confidence allocation, for which both the harmonic term and \( \log(1 / \delta) \) are \( \mathcal{O}(j) \).
The additive dependence in Theorem~\ref{thm:additive-conditional-restart} therefore makes the
stochastic cost, relative to the target \(h_j / \sqrt{N_j}\), of order \(j / h_j^2\), which vanishes
by Lemma~\ref{lem:summability-epoch}. Here the additive structure is crucial: a multiplicative
dependence would instead produce \(j^2 / h_j^2\) and lose the frontier. 

\begin{proof}[Proof of Theorem~\ref{thm:sufficiency}]
    Fix \( \alpha \in (0, 1) \) and an arbitrary
    \( \mathcal{P} \in \mathfrak{P}_{\mathrm{cvx}}(L, R, \sigma) \).
    By Lemma~\ref{lem:schedule-calibration}, the schedule of Algorithm~\ref{alg:schedule} satisfies
    \( \gamma_j \leq 1/(2L) \) for every \( j \geq j_0 \) and \( Q_{\infty} \leq R^2 / \sigma^2 \).
    Hence Lemma~\ref{lem:global-radius} applies at level \( \alpha/2 \), and the global radius event
    \( \mathcal{G} \) of \eqref{eq:global-radius-event} satisfies
    \( \mathbb{P}(\mathcal{G}) \geq 1 - \alpha/2 \). With the allocation
    \( \delta_{j,r} = \alpha / 2^{2(j+1)} \) of \eqref{eq:confidence-allocation}, the restart events
    \( \mathcal{A}_{j,r} \) satisfy \( \mathbb{P}(\mathcal{A}_{j,r}) \geq 1 - \delta_{j,r} \), and
    \[
        \sum_{j = j_0}^{\infty} \sum_{r = 1}^{N_j} \delta_{j, r}
        = \sum_{j = j_0}^{\infty} N_j \frac{\alpha}{2^{2(j+1)}}
        \leq \frac{\alpha}{2}.
    \]
    A union bound therefore gives \( \mathbb{P}(\mathcal{E}) \geq 1 - \alpha \) for
    \[
        \mathcal{E} := \mathcal{G} \cap \bigcap_{j \geq j_0} \bigcap_{1 \leq r \leq N_j}
        \mathcal{A}_{j,r}.
    \]
    On \( \mathcal{E} \), Lemma~\ref{lem:sufficiency-prefix} controls every
    \( 1 \leq t < 3 N_{j_0} / 2 \) with the constant \( C_{\mathrm{prefix}} \), and
    Lemma~\ref{lem:epoch-propagation} controls every \( t \geq 3 N_{j_0} / 2 \) with the constant
    \( C_{\mathrm{latter}} \). Finally, \( L \)-smoothness at the minimizer and
    \eqref{eq:global-radius-event} give \( \Delta_t \leq L B_{\alpha/2} \leq \frac{81}{4} L R^2
    \log(4/\alpha) \) on \( \mathcal{G} \), so the boundary may also be capped at one. Setting
    \[
        C_{h, \alpha, L, R, \sigma} := \max \left\{ C_{\mathrm{prefix}}, C_{\mathrm{latter}},
        \frac{81}{4} L R^2 \log \frac{4}{\alpha} \right\}
    \]
    yields \( \Delta_t \leq C_{h, \alpha, L, R, \sigma} \min \{ 1, h(t) / \sqrt{t} \} \)
    simultaneously for all \( t \geq 1 \) on \( \mathcal{E} \). Since \( \mathcal{P} \) was
    arbitrary and \( \eta \) does not depend on it, the infimum over
    \( \mathfrak{P}_{\mathrm{cvx}}(L, R, \sigma) \) obeys the same bound.
\end{proof}

%% file: sections/necessity.tex
\section{Necessity of reciprocal-square summability}
\label{section:necessity}

We prove that reciprocal-square summability is necessary for a deterministic schedule to attain a
time-uniform \( h(n)/\sqrt{n} \) boundary. It is enough to work with the uncapped envelope \(\varepsilon_{\alpha}(n) \leq B \cdot h(n) / \sqrt{n}\),
where \( B \) is a constant. This is because the above envelope is weaker than the capped boundary in
Theorem~\ref{thm:main}.

\begin{theorem}[Necessity part of Theorem~\ref{thm:main}] \label{thm:necessity}
    Fix $L,R,\sigma>0$ and $\alpha\in(0,1)$. Let $h\in\mathcal{H}_0$ and let $\eta=\{\eta_t\}_{t\geq0}$ be any deterministic nonnegative infinite stepsize schedule, fixed before the run and independent of the terminal horizon and the problem instance. The schedule may depend on $h,L,R,\sigma$, and $\alpha$. Suppose that $\mathcal{M}_\eta$ admits a time-uniform convergence boundary $b_{\alpha,\eta}$ over $\mathfrak{P}_{\mathrm{cvx}}(L,R,\sigma)$ at confidence level $1-\alpha$. If there exist constants $B>0$ and $n_0\in\mathbb{N}_+$ such that
    % Let \( \eta = \{ \eta_t \}_{t \geq 0} \) be a deterministic schedule, \( \alpha \in (0, 1) \) 
    % be a failure probability, and \( h: [1, \infty) \to (0, \infty) \) be a positive, eventually
    % nondecreasing function such that \( h(n) = o(\sqrt{n}) \). If a valid level-\( \alpha \)
    % time-uniform boundary \(b_{\alpha, \eta}\) satisfies
    \[
        b_{\alpha, \eta}(n) \leq B \cdot \frac{h(n)}{\sqrt{n}}, \quad \forall n \geq n_0,
    \]
    % for some constant \( B > 0 \) and all \( n \geq n_0 \), 
    then \(\sum_{j=1}^{\infty} h(2^j)^{-2} < \infty\).
\end{theorem}

\begin{remark}
    Allowing $\eta$ to depend on $\alpha$ only strengthens the necessity statement: the conclusion therefore applies directly to the confidence-independent schedules required in the definition of achievability.
\end{remark}

Throughout the proof, the schedule, the profile, and all constants in Theorem~\ref{thm:necessity}
are fixed. We write \( S_n := \sum_{t<n} \eta_t \), \( Q_n := \sum_{t<n} \eta_t^2 \), and
\( \varepsilon_{\alpha}(n) := b_{\alpha, \eta}(n) \). All lemmas and corollaries in the remainder of this section are understood to hold under the conditions of Theorem~\ref{thm:necessity}. 

\subsection{Fixed-horizon analysis}

Since time-uniform boundaries imply fixed-horizon boundaries:
\[
    \mathbb{P}\bigl(\Delta_n \leq \varepsilon_{\alpha}(n)\bigr) \geq \mathbb{P}\bigl(\Delta_t \leq \varepsilon_{\alpha}(t),\;
    \forall t \geq 1\bigr) \geq 1 - \alpha,
\]
we can apply the fixed-horizon analysis to obtain some necessary results, which are summarized in the
following two lemmas. The first comes from the amount of deterministic progress required to 
reduce the initial error.

\begin{lemma} \label{lem:deterministic-progress-obstruction}
    For any \( n \in \mathbb{N}_{+} \), 
    \[
        \varepsilon_{\alpha}(n) \geq \frac{R^2}{16} \min \left\{ L, \frac{1}{S_n} \right\},
    \]
    where \( 1/S_n := +\infty \) when \( S_n = 0 \).
\end{lemma}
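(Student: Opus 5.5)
We will prove the lemma by exhibiting a single admissible instance on which SGD, run with the fixed schedule $\eta$, makes too little deterministic progress by time $n$. We will also make the noise harmless for the argument. The natural choice is a one-dimensional quadratic $f(x)=\tfrac{\mu}{2}x^2$ on $\mathcal{H}=\mathbb{R}$, with $\mu\le L$ chosen depending on $n$, $x^*=0$, $x_0=R$, and additive centered Gaussian noise $Z_t\sim\mathcal{N}(0,s^2)$ independent of the past. We take $s$ small enough that $\mathbb{E}\exp(Z_t^2/\sigma^2)\le \mathrm{e}$; for instance $s^2\le\sigma^2/4$ suffices, since then $(1-2s^2/\sigma^2)^{-1/2}\le\sqrt2<\mathrm{e}$. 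Since the boundary is valid for every admissible instance, it suffices to show that on this instance $\mathbb{P}(\Delta_n>\tfrac{R^2}{16}\min\{L,1/S_n\})>\alpha$ fails to be compatible with $\mathbb{P}(\Delta_n\le\varepsilon_\alpha(n))\ge1-\alpha$.

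\textbf{Deterministic part.} The iterates are $x_n=\prod_{t<n}(1-\mu\eta_t)\,R+\sum_{t<n}c_{t}Z_t$ with deterministic coefficients $c_t$. Hence $x_n$ is Gaussian with mean $m_n=R\prod_{t<n}(1-\mu\eta_t)$. We set $\mu:=\min\{L,1/S_n\}$, or $\mu=L$ if $S_n=0$.

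We first need $m_n\ge R/2$. This seems to require $\mu\eta_t\le 1$ for each $t$, since large individual steps could overshoot. We avoid this issue by a cruder lower bound: the product $\prod(1-\mu\eta_t)$ can be negative or small, but we only need $|m_n|$ bounded below. The issue is that if some $\mu\eta_t$ is close to $1$, the product can vanish. To handle this, we take $\mu:=\min\{L,1/(2S_n)\}$. Then $\mu\eta_t\le\mu S_n\le 1/2$ for all $t<n$, and we use $1-u\ge \mathrm{e}^{-2u}$ for $u\in[0,1/2]$. This gives $m_n\ge R\,\mathrm{e}^{-2\mu S_n}\ge R/\mathrm{e}$.

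\textbf{Probabilistic part.} A Gaussian with mean $m\ge R/\mathrm{e}$ and any variance satisfies $\mathbb{P}(|x_n|\ge m)\ge 1/2$, by symmetry about its mean. Hence with probability at least $1/2$,
\[
\Delta_n=\tfrac{\mu}{2}x_n^2\ge \tfrac{\mu R^2}{2\mathrm{e}^2}.
\]
If $\alpha<1/2$, this forces $\varepsilon_\alpha(n)\ge \mu R^2/(2\mathrm{e}^2)$, and with $\mu\ge\tfrac12\min\{L,1/S_n\}$ we get the constant $1/(4\mathrm{e}^2)>1/16$ up to checking; indeed $4\mathrm{e}^2\approx29.6$, so this falls short of $1/16$.

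The main obstacle is the constant, and the requirement $\alpha<1/2$ when $\alpha\in(0,1)$ is arbitrary. To handle general $\alpha$, we remove the noise entirely: taking $Z_t\equiv0$ is admissible. Then $\Delta_n$ is deterministic, and the event has probability $0$ or $1$, so $\mathbb{P}\ge1-\alpha>0$ forces $\Delta_n\le\varepsilon_\alpha(n)$. With $x_n=m_n$ exactly, we then tune the constants. We choose $\mu=\min\{L,1/(2S_n)\}$ and use the sharper bound $\prod(1-u_t)\ge 1-\sum u_t\ge 1/2$. This yields $\Delta_n\ge \tfrac{\mu}{2}\cdot\tfrac{R^2}{4}=\tfrac{\mu R^2}{8}\ge \tfrac{R^2}{16}\min\{L,1/S_n\}$, exactly the claimed bound. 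The case $S_n=0$ gives $x_n=x_0$ and $\Delta_n=LR^2/2$, consistent with the bound.
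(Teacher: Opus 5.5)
Your final zero-noise argument is correct and is essentially the paper's proof. Both use the same one-dimensional linear recursion from $x_0=R$ with curvature $\mu\ge\frac12\min\{L,1/S_n\}$ and $\mu S_n\le\frac12$, the product inequality $\prod_{t<n}(1-\mu\eta_t)\ge 1-\mu S_n\ge\frac12$, and the fact that a deterministic event with probability at least $1-\alpha>0$ must hold. The only difference is that you take the quadratic $\frac{\mu}{2}x^2$ directly as the admissible instance, whereas the paper obtains it as the $a\downarrow 0$ limit of the quartic-flat family $\varphi_{\lambda,a}$ via a pathwise coupling and a Portmanteau transfer. That detour additionally shows the bound is forced by non-PL instances, but your direct choice suffices for the lemma as stated, and the abandoned Gaussian-noise exploration can simply be dropped from the write-up.
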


Lemma~\ref{lem:deterministic-progress-obstruction} shows that a sufficiently large total stepsize
\(S_n\) is necessary to reduce the initial error. The second restriction comes from the stochastic energy
injected by the oracle noise.

\begin{lemma} \label{lem:stochastic-energy-obstruction}
    Suppose \( S_n \geq 1/L \). Let \( z_{\alpha} := \Phi^{-1}(1 - \alpha/2) \), where $\Phi$ is the cumulative distribution function of the standard normal distribution. Then
    \[
        \varepsilon_{\alpha}(n) \geq \frac{z_{\alpha}^2 \sigma^2}{64} \cdot \frac{Q_n}{S_n}.
    \]
\end{lemma}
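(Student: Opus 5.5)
We will prove Lemma~\ref{lem:stochastic-energy-obstruction} by exhibiting one admissible instance on which \(\Delta_n\) exceeds the claimed threshold with probability larger than \(\alpha\). Since \(\varepsilon_\alpha(n)\) is a valid fixed-horizon boundary for every instance, this forces the inequality.

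\textbf{The instance.} We take \(\mathcal{H}=\mathbb{R}\) and the quadratic \(f(x)=\frac{\lambda}{2}x^2\), with curvature \(\lambda\in(0,L]\) to be tuned. We start at \(x_0=x^*=0\), which is allowed since \(R>0\). The noise is Gaussian, \(Z_t\sim\mathcal{N}(0,s^2)\) i.i.d., with \(s=c\sigma\) and \(c\) chosen so that \(\mathbb{E}\exp(Z^2/\sigma^2)=(1-2c^2)^{-1/2}\le \mathrm{e}\); for instance \(c=1/2\) gives \(\sqrt2\le\mathrm{e}\).

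\textbf{Law of the iterate.} The recursion is linear,
\[
x_{t+1}=(1-\eta_t\lambda)x_t-\eta_t Z_t .
\]
Hence \(x_n\) is exactly centered Gaussian with variance
\[
V_n=s^2\sum_{t<n}\eta_t^2\prod_{t<u<n}(1-\eta_u\lambda)^2 .
\]

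\textbf{Choice of curvature.} We set \(\lambda:=1/S_n\). This is at most \(L\) precisely because \(S_n\ge 1/L\), which is where the hypothesis is used.

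\textbf{Main obstacle: lower-bounding \(V_n\).} We need \(V_n\gtrsim s^2Q_n\), which requires the contraction factors \(\prod_{u}(1-\eta_u\lambda)^2\) to stay bounded below. Two facts make this work.
\begin{itemize}
\item If every \(\eta_u\lambda\le 1/2\), then \(\log(1-x)\ge -2x\) on \([0,1/2]\) gives
\[
\prod_{u<n}(1-\eta_u\lambda)^2\ge \exp\Bigl(-4\lambda\sum_u\eta_u\Bigr)\ge \mathrm{e}^{-4}.
\]
\item Every individual step satisfies \(\eta_u\lambda=\eta_u/S_n\le 1\), so a single large step cannot overshoot badly.
\end{itemize}
There may be steps with \(\eta_u\lambda\in(1/2,1]\), whose factor \((1-\eta_u\lambda)^2\) can be near zero. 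Since \(\sum_u \eta_u\lambda = 1\), there are at most two such steps. We handle them as follows.
\begin{itemize}
\item If the large steps carry at least half of \(Q_n\), consider the last large step \(t^\star\). Its own term is at least \(\eta_{t^\star}^2\) times the small-step product after it. We show the large steps contribute \(\gtrsim Q_n\), using that at most two terms exist and that each large step has \(\eta^2\ge \eta\,S_n/2\).
\item Otherwise we restrict to the small steps after the last large step, or else shrink \(\lambda\) by a constant factor (e.g.\ \(\lambda=1/(4S_n)\)), which keeps \(\eta_u\lambda\le 1/4\) for all \(u\) and removes the case split entirely.
\end{itemize}
We will try the shrinking route first. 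With \(\lambda=1/(4S_n)\le L\), every factor is at least \(\mathrm{e}^{-1}\) overall, giving
\[
V_n\ge \mathrm{e}^{-2}s^2Q_n .
\]

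\textbf{Conclusion.} On the event \(|x_n|\ge z_\alpha\sqrt{V_n}\), which has probability exactly \(\alpha\) (at least \(\alpha\) is enough after a small slack argument), we get
\[
\Delta_n=\frac{\lambda}{2}x_n^2\ge \frac{z_\alpha^2 s^2 Q_n}{8\mathrm{e}^2 S_n}.
\]
With \(s^2=\sigma^2/4\) this gives a constant slightly worse than \(1/64\). To recover the paper's \(1/64\) we tighten the constants: take \(c\) larger (e.g.\ \(c^2=(1-\mathrm{e}^{-2})/2\)) and use the sharper inequality \(\prod(1-x_u)\ge 1-\sum x_u\).

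The point that \(\mathbb{P}(\Delta_n>\varepsilon)>\alpha\) contradicts validity of the boundary requires strict inequality. We obtain it by slightly enlarging the threshold argument, or by using continuity of the Gaussian law to take the threshold at \(z_{\alpha'}\) with \(\alpha'\downarrow\alpha\).
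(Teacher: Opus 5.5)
Your proof is correct. Its core is the same computation as the paper's: a linear Gaussian recursion started at the minimizer, curvature \(\lambda\asymp 1/S_n\) so that every contraction product satisfies \(\prod_{u}(1-\lambda\eta_u)\ge 1-\lambda S_n\), and a two-sided Gaussian quantile. The paper takes \(\lambda_n=1/(2S_n)\) and noise \(\mathcal{N}(0,\sigma^2/4)\), which gives \(\operatorname{Var}(Y_n^{(0)})\ge\sigma^2Q_n/16\) and exactly the constant \(1/64\).

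The genuine difference is the instance on which validity of the boundary is invoked. The paper does not invoke it on the quadratic. It invokes it on the quartic-flat objectives \(\varphi_{\lambda,a}\), couples them pathwise to the quadratic recursion (Lemma~\ref{lem:appendix-quartic-to-quadratic}), and passes the coverage inequality to the limit \(a\downarrow0\) by the Portmanteau theorem (Lemma~\ref{lem:appendix-marginal-flat-transfer}). Your direct route is legitimate and shorter, because \(\frac{\lambda}{2}x^2\) with \(\lambda\le L\), \(x_0=x^*=0\) and \(\mathcal{N}(0,\sigma^2/4)\) noise is itself an admissible instance. The paper's detour buys a stronger fact: the bound holds even if the boundary is required to be valid only on non-strongly-convex, non-PL instances. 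This matches its claim that the obstruction is not an artifact of curvature.

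A few bookkeeping corrections, none of which affect validity. With \(\lambda=1/(4S_n)\), your logarithmic estimate actually gives \(\prod(1-\lambda\eta_u)^2\ge \mathrm{e}^{-1}\), not \(\mathrm{e}^{-2}\). The product inequality gives \(9/16\), which already yields the constant \(9/512>1/64\) with \(s^2=\sigma^2/4\), so enlarging \(c\) is unnecessary. Alternatively, take \(\lambda=1/(2S_n)\) to hit \(1/64\) exactly. In the abandoned \(\lambda=1/S_n\) route, at most one step, not two, can have \(\lambda\eta_u>1/2\).

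The strictness you worry about is automatic. The hypothesis \(S_n\ge 1/L\) forces \(Q_n>0\), hence \(V_n>0\). If \(\varepsilon_\alpha(n)<\frac{\lambda}{2}z_\alpha^2V_n\), strict monotonicity of \(\Phi\) then gives \(\mathbb{P}(\Delta_n\le\varepsilon_\alpha(n))<1-\alpha\), a contradiction.
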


Lemma~\ref{lem:stochastic-energy-obstruction} establishes a trade-off constraint: the error bound is
governed by the ratio \( Q_n / S_n \). Consequently, accommodating a large \( Q_n \) requires a correspondingly large $S_n$ in order to achieve a sufficiently small error bound. The proofs of
Lemmas~\ref{lem:deterministic-progress-obstruction} and \ref{lem:stochastic-energy-obstruction}
are deferred to Appendix~\ref{appendixsub:fixed-time-analysis}.

Now let \( N_j = 2^j \), \(h_j = h(2^j)\), and \(\mathcal{Q}_j = Q_{N_j}\). We define \( p_j :=
S_{N_j} / \sqrt{N_j} \). Then we have the following corollary, whose proof can also be seen in Appendix~\ref{appendixsub:fixed-time-analysis}:
\begin{corollary} \label{cor:step-lower-bound}
    For sufficiently large \( j \in \mathbb{N}_{+} \), we have
    \[
        p_j \geq \frac{z_{\alpha}^2 \sigma^2}{64 B} \cdot\frac{\mathcal{Q}_j}{h_j}.
    \]
\end{corollary}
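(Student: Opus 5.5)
The plan is to combine Lemmas~\ref{lem:deterministic-progress-obstruction} and \ref{lem:stochastic-energy-obstruction} at the dyadic horizon \(n = N_j\), using the hypothesis \(\varepsilon_\alpha(N_j) \le B h_j/\sqrt{N_j}\) for \(N_j \ge n_0\). The first step is to show that \(S_{N_j} \ge 1/L\) for all sufficiently large \(j\), so that Lemma~\ref{lem:stochastic-energy-obstruction} becomes applicable.

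To do this, suppose \(S_{N_j} < 1/L\). Then \(\min\{L, 1/S_{N_j}\} = L\), and Lemma~\ref{lem:deterministic-progress-obstruction} gives
\[
\varepsilon_\alpha(N_j) \ge \frac{L R^2}{16}.
\]
On the other hand, \(h \in \mathcal{H}_0\) means \(h(n) = o(\sqrt{n})\), so \(B h_j/\sqrt{N_j} \to 0\). Hence for \(j\) large we have \(B h_j/\sqrt{N_j} < LR^2/16\), which contradicts the upper bound on \(\varepsilon_\alpha(N_j)\). Therefore \(S_{N_j} \ge 1/L\) for all \(j \ge j_1\), where \(j_1\) is chosen so that \(N_{j_1} \ge n_0\) and the inequality above holds. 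In particular \(S_{N_j} > 0\), so dividing by it is legitimate.

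For such \(j\), Lemma~\ref{lem:stochastic-energy-obstruction} combined with the envelope gives
\[
\frac{z_\alpha^2 \sigma^2}{64}\cdot \frac{\mathcal{Q}_j}{S_{N_j}} \le \varepsilon_\alpha(N_j) \le B\,\frac{h_j}{\sqrt{N_j}}.
\]
Rearranging yields
\[
\frac{S_{N_j}}{\sqrt{N_j}} \ge \frac{z_\alpha^2\sigma^2}{64 B}\cdot\frac{\mathcal{Q}_j}{h_j},
\]
which is exactly the claimed bound on \(p_j\). If \(\mathcal{Q}_j = 0\), the claim is trivial.

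The only step that needs care is the first one: we must rule out small cumulative step size before the stochastic lemma can be invoked. The sublinear growth \(h(n) = o(\sqrt n)\) is what makes this contradiction work. Everything after that is algebra.
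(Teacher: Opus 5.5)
Your proof is correct and follows the paper's argument: Lemma~\ref{lem:deterministic-progress-obstruction} together with $\varepsilon_\alpha(N_j)\le Bh_j/\sqrt{N_j}\to 0$ forces $S_{N_j}\ge 1/L$ eventually, after which Lemma~\ref{lem:stochastic-energy-obstruction} at $n=N_j$ gives the bound by rearrangement. The paper also notes along the way that $S_n\to\infty$, but this corollary does not need it.
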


These lemmas provide necessary results, but they cannot place infinitely many epoch constraints 
on one common event. The next subsection uses one fixed instance throughout the infinite run; this
is where the joint time-uniform guarantee is essential.

\subsection{A fixed flat-active witness and the dyadic epoch ceiling}
\label{subsec:necessity-witness}

Define
\[
    \phi(y) := \frac{L}{4} \left(\sqrt{R^2 + y^2} - R\right)^2
\]
It is straightforward to verify that \( \phi \) is convex, \( L \)-smooth, and non-PL. We can further verify
that \( \phi \) is quartic-flat at the minimizer by Taylor expansion:
\[
    \phi(y) = \frac{L}{16 R^2} y^4 + \mathcal{O}(y^6), \quad \text{as } y \to 0.
\]
Moreover,
\[
    \phi'(y) = \frac{Ly^3}{2\sqrt{R^2+y^2}(\sqrt{R^2+y^2}+R)}, \quad |\phi'(y)|\leq\frac{L|y|^3}{4R^2}.
\]
Let \( \vartheta := (1 - \mathrm{e}^{-2})/2 \), \( \nu^2 := \vartheta \sigma^2 \).
The recursion is initialized at the minimizer and uses independent \( \zeta_t \sim \mathcal{N}(0, \nu^2) \)
as the oracle noise:
\[
    Y_{t+1} = Y_{t} - \eta_t (\phi'(Y_t) + \zeta_t), \quad Y_0 = 0.
\]
The Gaussian oracle satisfies the noise assumption with equality: \( \mathbb{E}[\exp(\zeta_t^2/\sigma^2)]
= (1 - 2 \nu^2 / \sigma^2)^{-1/2} = \mathrm{e} \). Therefore, it is a valid problem instance defined in
Definition~\ref{def:problem-class}. By the assumption of Theorem~\ref{thm:necessity}, the all-time 
success event \( \mathcal{C} := \{ \phi(Y_n) \leq \varepsilon_{\alpha}(n), \forall n \geq 1 \} \) satisfies
\( \mathbb{P}(\mathcal{C}) \geq 1 - \alpha \), where \( \varepsilon_{\alpha}(n) \leq B h(n)/\sqrt{n} \) for all sufficiently large $n$.

Validity on this instance first implies that \( \eta_t \to 0 \) as \( t \to \infty \) (see 
Lemma~\ref{lem:deminishing-stepsize} in Appendix~\ref{appendixsub:preliminaries-Gaussian}).

Now we introduce a few quantities that summarize the behavior of the stepsize schedule and the boundary
on each dyadic epoch. First, over the interval \( [N_j, N_{j+1}] \), we consider the worst-case rescaling
of the overhead function and set
\[
    \hat{h}_j := \max_{\substack{N_j \leq n \leq N_{j+1} \\ n \in \mathbb{N}_{+}}} h(n) \sqrt{\frac{N_j}
    {n}}.
\]
This quantity captures the largest effective value of \( h(n) / \sqrt{n} \) within the \( j \)-th epoch,
expressed at the left endpoint scale \( N_j \).
Moreover, there exists a threshold \( j_h \) such that, for every \( j > i \geq j_h \), eventually 
monotonicity gives \( h_j \leq \hat{h}_j \leq h_{j+1} \) and \( \hat{h}_i \leq h_{i+1} \leq h_j \). 
Now we express the corresponding boundary level at epoch \( j \) as \( \bar{\varepsilon}_j := B \hat{h}_j / 
\sqrt{N_j} \). On the event \( \mathcal{C} \), all iterates in the \( j \)-th epoch satisfy
\( \phi(Y_n) \leq \bar{\varepsilon}_j \). This motivates defining
\[
    \beta_j^2 := \sup \{ |y|^2: \phi(y) \leq \bar{\varepsilon}_j \} = r^2(\bar{\varepsilon}_j), \quad
    r^2(u) := 4R \sqrt{\frac{u}{L}} + \frac{4u}{L}.
\]

The following lemma establishes the core time-uniform property of our lower bound.
\begin{lemma} \label{lem:dyadic-epoch-ceiling}
    Let \( \gamma_0 = (\log 2) / 4 \) and \( C_b = 800 / (\vartheta \gamma_0) \). Then for every 
    sufficiently large \( j \), we have
    \[
        H_j := \sum_{t=N_j}^{N_{j+1} - 1} \eta_t \leq C_b \frac{B}{\sigma^2} \frac{\hat{h}_j \sqrt{N_j}}{j}. 
    \]
\end{lemma}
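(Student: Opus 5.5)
The plan is to argue by contradiction on the all-time event \(\mathcal{C}\) for the fixed witness \(Y\), using many independent Gaussian block events inside the single epoch \([N_j,N_{j+1})\). The heuristic behind the target bound is a "Langevin" picture. On the flat witness, the trajectory within an epoch fluctuates at energy level \(\phi \approx \sigma^2\bar\eta\), where \(\bar\eta = H_j/N_j\) is the average stepsize; this is the content of Lemma~\ref{lem:stochastic-energy-obstruction}, namely \(\varepsilon \gtrsim \sigma^2 Q/S\). The maximum over roughly \(2^{cj}\) effectively independent stretches then reaches level \(\sigma^2\bar\eta\cdot j\). Requiring this to stay below \(\bar\varepsilon_j = B\hat h_j/\sqrt{N_j}\) gives \(\bar\eta \lesssim B\hat h_j/(\sigma^2\sqrt{N_j}\,j)\), and multiplying by \(N_j\) gives exactly \(H_j \lesssim B\hat h_j\sqrt{N_j}/(\sigma^2 j)\). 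The constant \(\gamma_0=(\log 2)/4\) should be the exponent rate of the block count \(2^{j/4}\) or similar, and \(\vartheta\) enters through \(\nu^2=\vartheta\sigma^2\).

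\textbf{Step 1 (blocks and confinement).} Suppose the inequality fails for infinitely many \(j\), and fix such a large \(j\). Partition \([N_j,N_{j+1})\) into consecutive blocks \(I_1,\dots,I_K\) with \(K \ge 2^{j/4}\). Each block is chosen so that its squared-step energy \(q_k=\sum_{t\in I_k}\eta_t^2\) is at least a threshold \(q_\star \asymp \beta_j^2/(\nu^2\gamma_0 j)\). By Cauchy--Schwarz, \(\sum_{t\in I_k}\eta_t^2 \ge (\sum_{t\in I_k}\eta_t)^2/|I_k|\), so a large \(H_j\) spread over \(N_j\) steps gives enough total energy \(\mathcal{Q}_{\text{epoch}} \ge H_j^2/N_j\) to fill \(K\) such blocks. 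Since \(\eta_t\to 0\) by Lemma~\ref{lem:deminishing-stepsize}, the blocks can be cut without overshooting \(q_\star\) by more than a constant factor. On \(\mathcal{C}\) every iterate in the epoch satisfies \(|Y_n|\le\beta_j\), so \(|\phi'(Y_n)|\le L\beta_j^3/(4R^2)\). Over block \(I_k\) the deterministic displacement is therefore at most \(L\beta_j^3 H(I_k)/(4R^2)\), where \(H(I_k):=\sum_{t\in I_k}\eta_t\).

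\textbf{Step 2 (independent exit events).} Let \(E_k := \{\sum_{t\in I_k}\eta_t\zeta_t \ge 3\beta_j\}\). These events are independent across \(k\), since they depend on disjoint Gaussian blocks, and \(\mathbb{P}(E_k)\ge \bar\Phi(3\beta_j/(\nu\sqrt{q_k}))\ge \exp(-c\,\gamma_0 j)\) by the choice of \(q_\star\). On \(E_k\cap\mathcal{C}\), the increment \(Y_{\text{end}}-Y_{\text{start}}\) equals minus the noise plus the drift. Its magnitude is at least \(3\beta_j - L\beta_j^3H(I_k)/(4R^2)\), which exceeds \(2\beta_j\) provided the drift term is at most \(\beta_j\). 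That contradicts confinement to \([-\beta_j,\beta_j]\). Hence \(\mathcal{C}\subset\bigcap_k E_k^c\). By independence, \(1-\alpha \le \prod_k(1-\mathbb{P}(E_k)) \le \exp(-K e^{-c\gamma_0 j})\). With \(K\) growing faster than \(e^{c\gamma_0 j}\), the right-hand side tends to zero, which is a contradiction for large \(j\). Keeping track of constants and using \(\beta_j^2 = r^2(\bar\varepsilon_j) \le 8R\sqrt{\bar\varepsilon_j/L}\) for small \(\bar\varepsilon_j\) should turn the failed inequality into the required block count and produce \(C_b = 800/(\vartheta\gamma_0)\).

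\textbf{Main obstacle.} I expect the hardest step to be controlling the drift inside each block, that is, ensuring \(L\beta_j^3H(I_k)/(4R^2)\le\beta_j\), or \(H(I_k)\lesssim R^2/(L\beta_j^2)\). This requires each block to be short in cumulative stepsize, so that it does not exceed the relaxation time of the flat potential at scale \(\beta_j\), while still carrying noise energy \(q_\star\). My first attempt would be to cut blocks greedily by energy \(q_\star\) and then discard the blocks whose \(H(I_k)\) is too large. By Markov's inequality at most \(H_j L\beta_j^2/R^2\) blocks are discarded, and I would verify that this is at most \(K/2\) in the regime where the claimed bound fails, using that \(\hat h_j=o(\sqrt{N_j})\) makes \(\beta_j\to0\). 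If this bookkeeping does not close, the fallback is to conjugate by a potential-based argument: compare \(\phi(Y)\) over the block to the Gaussian martingale part, in the style of the proof of Lemma~\ref{lem:stochastic-energy-obstruction}, and apply that argument blockwise to produce the independent events.
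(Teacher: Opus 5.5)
Your overall mechanism is the paper's. On \(\mathcal{C}\), confinement to \([-\beta_j,\beta_j]\) together with a drift bound forces each block's Gaussian sum into a window of size \(O(\beta_j)\). A block whose noise variance is at least of order \(\beta_j^2/(\gamma_0 j)\) violates this with probability \(\gtrsim 2^{-j/8}/\sqrt{j}\), independently across blocks. There are \(\gtrsim 2^{j/4}\hat h_j^{3/2}/j\) such blocks (not necessarily \(2^{j/4}\), but that suffices).

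Your block construction is dual to the paper's. The paper cuts greedily by step \emph{mass} \(\tau_j=\kappa_0R^2/(L\beta_j^2)\). The drift bound then holds in every block by construction, and overshoot is trivial because \(\tau_j\to\infty\) while \(\eta_t\to0\). The variance requirement is needed only on average, via \(V_k\ge\nu^2\tau_j^2/m_k\), \(\sum_k m_k\le N_j\), and Markov to keep half the blocks. You instead cut by squared-step energy and discard high-mass blocks. That discard bookkeeping does close: take \(q_\star=9\beta_j^2/(\nu^2\gamma_0 j)\), \(\tau=4R^2/(L\beta_j^2)\), and Lemma~\ref{lem:tolerance-bound}. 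Then the negation with \(C_b=800/(\vartheta\gamma_0)\) gives \(H_j/\tau<\frac{288}{800}\cdot H_j^2/(2N_jq_\star)\).

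The gap is the lower bound on the number \(K\) of energy blocks. You need \(K\gtrsim H_j^2/(N_jq_\star)\), and you justify it by saying that, since \(\eta_t\to0\), blocks overshoot \(q_\star\) by at most a constant factor. This does not follow, because \(q_\star\to0\) as well. The only per-step control on epoch \(j\) is the single-step anti-concentration in Lemma~\ref{lem:deminishing-stepsize}, which gives \(\eta_t\le\sqrt{2/\pi}\,\beta_j/((1-\alpha)\nu)\). So \(\eta_t^2/q_\star\) can be of order \(j\).

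The consequences compound:
\begin{itemize}
\item each block's energy is then bounded only by about \(jq_\star\);
\item the energy \(H_j^2/N_j\) therefore guarantees only \(K\gtrsim H_j^2/(jN_jq_\star)\);
\item the discarded count \(H_j/\tau\), which is bounded only by a fixed fraction of \(H_j^2/(N_jq_\star)\), can exhaust all the blocks.
\end{itemize}
The lost factor is precisely the \(j\) in the lemma, so it cannot be absorbed into \(C_b\).

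Within your route, the repair is to count only the energy of small steps (\(\eta_t^2\le q_\star\)). Every complete greedy block contains less than \(2q_\star\) of it, so \(K>(Q^{\mathrm{small}}-q_\star)/(2q_\star)\) with \(Q^{\mathrm{small}}\ge(H_j-H^{\mathrm{big}})^2/N_j\). A large step is by itself an independent exit event. Its drift is negligible, so on \(\mathcal{C}\) it requires \(|\zeta_t|/\nu<\sqrt{\gamma_0 j}\), which has probability \(1-2\Phi(-\sqrt{\gamma_0 j})\). This leaves two cases:
\begin{itemize}
\item there are more than about \(2^{j/8}\sqrt{j}\) large steps, and \(\mathbb{P}(\mathcal{C})\to0\) directly;
\item otherwise \(H^{\mathrm{big}}\lesssim 2^{j/8}\sqrt{j}\,\beta_j/\nu=o(H_j)\), and the small-step count goes through.
\end{itemize}
Alternatively, adopt the paper's mass-based cut, which avoids the issue entirely.
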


The proof is based on a block anti-concentration argument. On the common success event, all iterates in
epoch \( j \) remain in a shrinking neighborhood of the minimizer, where the quartic-flat objective
produces only a small accumulated drift. If the epoch stepsize mass \(H_j\) were too large, the
epoch could be partitioned into many disjoint blocks, each of which would force an independent Gaussian
increment to lie in a prescribed small interval. Gaussian anti-concentration then shows that
all these constraints cannot hold with probability at least \( 1 - \alpha \).
Balancing the number of available blocks against the corresponding Gaussian tail scale yields the factor
\( 1/j \).
The proof is deferred to Appendices~\ref{appendixsub:preliminaries-Gaussian} and \ref{appendixsub:greedy-step-mass}.

\subsection{Reciprocal-square summability}

We now combine the fixed-horizon analysis with the dyadic epoch ceiling, yielding the following two lemmas. In 
Corollary~\ref{cor:step-lower-bound}, we established a lower bound for \( p_j \). The following lemma
provides an upper bound for \( p_j \), and together they yield an upper bound for \( \mathcal{Q}_j \).

\begin{lemma} \label{lem:step-upper-bound}
    There exist constants \( C_p \) and \(C_q\) such that for sufficiently large \( j \), we have
    \[
        p_j \leq C_p \frac{B}{\sigma^2} \frac{h_j}{j}, \quad
        \mathcal{Q}_j \leq C_q \frac{B^2}{\sigma^4} \frac{h_j^2}{j}.
    \]
\end{lemma}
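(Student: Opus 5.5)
The plan is to bound $p_j$ from above by summing the dyadic epoch ceilings of Lemma~\ref{lem:dyadic-epoch-ceiling}. Then we insert that upper bound into Corollary~\ref{cor:step-lower-bound} to get the bound on $\mathcal{Q}_j$.

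\emph{Step 1: splitting $S_{N_j}$.} Fix a threshold $j_1\ge j_h$ beyond which Lemma~\ref{lem:dyadic-epoch-ceiling} and the monotonicity relations $\hat h_i\le h_{i+1}\le h_j$ (for $i<j$) hold. Write
\[
S_{N_j}=S_{N_{j_1}}+\sum_{i=j_1}^{j-1}H_i .
\]
The first term is a fixed constant $K$. By the lemma, each epoch term satisfies $H_i\le C_b\frac{B}{\sigma^2}\frac{\hat h_i\sqrt{N_i}}{i}$. We use $\hat h_i\le h_j$ for $i\le j-1$. Since $\sqrt{N_i}=2^{i/2}$, it remains to bound the geometric-type sum $\sum_{i\le j-1}2^{i/2}/i$.

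\emph{Step 2: the weighted geometric sum.} Split this sum at $i=j/2$. For $i\le j/2$ the contribution is at most $O(2^{j/4})$. For $j/2<i\le j-1$ we have $1/i\le 2/j$, so the contribution is at most $\frac{2}{j}\cdot\frac{2^{j/2}}{\sqrt2-1}$. Hence
\[
\sum_{i=j_1}^{j-1}\frac{2^{i/2}}{i}\le c\,\frac{2^{j/2}}{j}+O(2^{j/4}).
\]
Dividing by $\sqrt{N_j}=2^{j/2}$ gives
\[
p_j\le \frac{K}{2^{j/2}}+C_b c\frac{B}{\sigma^2}\frac{h_j}{j}+O\!\left(\frac{B}{\sigma^2}h_j2^{-j/4}\right).
\]

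\emph{Step 3: absorbing the remainders.} This is the only slightly delicate point. Since $h$ is eventually nondecreasing and positive, $h_j\ge h_{j_1}>0$. Therefore $K2^{-j/2}$ and $h_j2^{-j/4}$ are both $o(h_j/j)$, and for large $j$ they are absorbed into the main term with a doubled constant. This yields $p_j\le C_p\frac{B}{\sigma^2}\frac{h_j}{j}$.

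\emph{Step 4: the bound on $\mathcal{Q}_j$.} Combine Step 3 with Corollary~\ref{cor:step-lower-bound}:
\[
\frac{z_\alpha^2\sigma^2}{64B}\frac{\mathcal{Q}_j}{h_j}\le p_j\le C_p\frac{B}{\sigma^2}\frac{h_j}{j}.
\]
This gives $\mathcal{Q}_j\le \frac{64C_p}{z_\alpha^2}\frac{B^2}{\sigma^4}\frac{h_j^2}{j}$, so we can take $C_q=64C_p/z_\alpha^2$, which depends only on $\alpha$.

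I expect no real obstacle. The only care needed is in Step 2, so that the weighting $1/i$ yields a $1/j$ factor rather than a harmonic-sum loss. The geometric growth of $2^{i/2}$ concentrates the sum near $i=j$, and that is what makes this work.
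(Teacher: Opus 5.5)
Your proposal is correct and is essentially the paper's argument. The paper unrolls $H_j/\sqrt{N_j}=\sqrt{2}\,p_{j+1}-p_j$ into $p_j\le 2^{-(j-j_0)/2}p_{j_0}+\sum_{i=j_0}^{j-1}2^{-(j-i)/2}C_b\frac{B}{\sigma^2}\frac{\hat h_i}{i}$, which is exactly your direct summation of the epoch ceilings divided by $\sqrt{N_j}$; it then splits at $j/2$, absorbs the remainders (your observation that $h_j$ is bounded below, needed for the $2^{-j/2}$ term, is left implicit there), and feeds the result into Corollary~\ref{cor:step-lower-bound} just as you do.
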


The following nonlinear lower recurrence for \(\mathcal{Q}_j\) complements the upper bounds above. This recurrence is the key mechanism that connects the dyadic energy growth to the reciprocal-square series.

\begin{lemma} \label{lem:energy-amplification}
    Let \( m \) be sufficiently large. Then for all \( J > m \),
    \[
        \mathcal{Q}_J \geq (2 - \sqrt{2}) \mathcal{Q}_m + (\sqrt{2} - 1)^2 \frac{z_{\alpha}^4 \sigma^4}
        {2^{12} B^2} \sum_{j=m+1}^{J-1} \frac{\mathcal{Q}_j^2}{h_j^2}.
    \]
\end{lemma}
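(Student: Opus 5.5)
The plan is to decompose the dyadic energy epoch by epoch, bound each epoch's energy below by its stepsize mass via Cauchy--Schwarz, and then convert stepsize mass into the normalized cumulative steps \(p_j\) through a discrete Hardy (Young convolution) inequality. Corollary~\ref{cor:step-lower-bound} then turns \(p_j\) into \(\mathcal{Q}_j/h_j\). Throughout, take \(m\) large enough that Corollary~\ref{cor:step-lower-bound} holds for every \(j \geq m\).

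\emph{Step 1 (energy versus mass).} Write \(H_i := S_{N_{i+1}} - S_{N_i}\) as in Lemma~\ref{lem:dyadic-epoch-ceiling}. Epoch \(i\) has \(N_i\) steps, so Cauchy--Schwarz gives
\[
\mathcal{Q}_{i+1} - \mathcal{Q}_i \geq \frac{H_i^2}{N_i}.
\]
Summing this over \(i = m, \dots, J-1\) gives
\[
\mathcal{Q}_J \geq \mathcal{Q}_m + \sum_{i=m}^{J-1} a_i^2, \qquad a_i := \frac{H_i}{2^{i/2}}.
\]
In the same way, \(S_{N_m}^2 \leq N_m \mathcal{Q}_m\), so \(p_m \leq \sqrt{\mathcal{Q}_m}\).

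\emph{Step 2 (Hardy-type inequality).} For \(j > m\) we have \(S_{N_j} = S_{N_m} + \sum_{i=m}^{j-1} H_i\). Dividing by \(2^{j/2}\) gives
\[
p_j = 2^{-(j-m)/2} p_m + \sum_{i=m}^{j-1} 2^{-(j-i)/2} a_i.
\]
This is a one-sided convolution of the sequence \((\sqrt{\mathcal{Q}_m}, a_m, a_{m+1}, \dots)\) with the kernel \(k_\ell = 2^{-\ell/2}\), whose \(\ell^1\) norm is at most \(1/(\sqrt2 - 1)\). Young's inequality should then give
\[
\sum_{j=m+1}^{J-1} p_j^2 \leq \frac{1}{(\sqrt2-1)^2}\Bigl(c_0\, \mathcal{Q}_m + \sum_{i=m}^{J-1} a_i^2\Bigr)
\]
for a kernel-dependent constant \(c_0\). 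The geometric tail attached to \(p_m\) sums to \(\sum_{\ell \geq 1} 2^{-\ell} = 1\), and \(c_0 \leq 1\) once this tail is handled separately from the convolution part. Rearranging, \(\sum_i a_i^2 \geq (\sqrt2-1)^2 \sum_j p_j^2 - c_0\mathcal{Q}_m\), and inserting this into Step 1 leaves a coefficient \(1 - c_0'\) on \(\mathcal{Q}_m\). I expect the bookkeeping to produce exactly \(1 - (\sqrt2-1) = 2 - \sqrt2\). If the constants come out slightly differently, I would try splitting \(p_j\) with a weighted Cauchy--Schwarz of the form \((u+v)^2 \leq (1+\lambda)u^2 + (1+\lambda^{-1})v^2\), choosing \(\lambda\) to reproduce \((2-\sqrt2, (\sqrt2-1)^2)\).

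\emph{Step 3 (insert the fixed-horizon lower bound).} For \(j \geq m\), Corollary~\ref{cor:step-lower-bound} gives
\[
p_j^2 \geq \frac{z_\alpha^4 \sigma^4}{2^{12} B^2} \cdot \frac{\mathcal{Q}_j^2}{h_j^2},
\]
since \(64^2 = 2^{12}\). Substituting this into the output of Step 2 yields the claimed recurrence.

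The main obstacle is Step 2: getting the constants exactly right. The difficulty is handling the initial mass \(S_{N_m}\), which is controlled only through \(\mathcal{Q}_m\), simultaneously with the sharp Young constant \((\sqrt2-1)^2\), so that only the fraction \(\sqrt2 - 1\) of \(\mathcal{Q}_m\) is consumed and not more. The ingredients are elementary, but this bookkeeping is delicate.
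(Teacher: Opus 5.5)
Your Steps~1 and~3 are correct and match the paper. This includes the bound \(p_m^2\le\mathcal{Q}_m\), which the paper uses implicitly in the same role. The gap is Step~2, which carries the whole content of the lemma. It is asserted (``should then give'', ``I expect'') rather than proved, and the tools you name, applied as you describe, do not produce the pair \((2-\sqrt2,(\sqrt2-1)^2)\).

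If you fold \(p_m\) into the convolution (kernel \(2^{-\ell/2}\), \(\ell\ge0\), acting on \((p_m,a_m/\sqrt2,a_{m+1}/\sqrt2,\dots)\)), sharp Young gives \((\sqrt2-1)^2\sum_{j\ge m}p_j^2\le 2p_m^2+\sum_i a_i^2\). After discarding \(j=m\) this means \(c_0=2\sqrt2-1>1\), so the coefficient of \(\mathcal{Q}_m\) becomes negative.

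If instead you split off the tail, writing \(p_j=u_j+v_j\) with \(\sum_{j>m}u_j^2=p_m^2\) and \(\|v\|_2\le(\sqrt2+1)\|a\|_2\), the cross term forces \((u+v)^2\le(1+\lambda)u^2+(1+\lambda^{-1})v^2\). This yields \(\frac{\lambda}{1+\lambda}(\sqrt2-1)^2\sum_{j>m}p_j^2\le\lambda(\sqrt2-1)^2p_m^2+\sum_i a_i^2\). The constant \((\sqrt2-1)^2\) is never reached for finite \(\lambda\). Imposing \(c_0\le\sqrt2-1\) (i.e.\ \(\lambda\le\sqrt2+1\)) lowers the interior constant to \((\sqrt2-1)/(2+\sqrt2)<(\sqrt2-1)^2\). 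For the same reason, \(c_0\le1\) together with the sharp interior constant does not follow along this route. So no choice of \(\lambda\) reproduces the stated constants. This route does give a variant with smaller positive constants, which would still suffice for the contradiction in the proof of Theorem~\ref{thm:necessity}, but it does not prove the lemma as stated.

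The missing idea is to work with the one-step relation rather than the unrolled convolution. Your formula for \(p_j\) is equivalent to \(a_j=\sqrt2\,p_{j+1}-p_j\). Expanding the square and using \(2\sqrt2\,p_jp_{j+1}\le\sqrt2(p_j^2+p_{j+1}^2)\) gives \(a_j^2\ge(2-\sqrt2)p_{j+1}^2-(\sqrt2-1)p_j^2\). Summing over \(m\le j\le J-1\) telescopes, because \((2-\sqrt2)-(\sqrt2-1)=(\sqrt2-1)^2\), to
\[
\sum_{j=m}^{J-1}a_j^2\ \ge\ (2-\sqrt2)p_J^2-(\sqrt2-1)p_m^2+(\sqrt2-1)^2\sum_{j=m+1}^{J-1}p_j^2 .
\]
Drop the \(p_J^2\) term, insert the result into Step~1, and use \(p_m^2\le\mathcal{Q}_m\). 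This gives \(\mathcal{Q}_J\ge(2-\sqrt2)\mathcal{Q}_m+(\sqrt2-1)^2\sum_{j=m+1}^{J-1}p_j^2\), and your Step~3 then finishes. This is the paper's argument. It is in effect a Lyapunov bound with potential \((\sqrt2-1)p_j^2\), which charges the initial mass and the interior sum jointly and exactly; treating the tail separately cannot do this.
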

The proofs of Lemmas~\ref{lem:step-upper-bound} and \ref{lem:energy-amplification} are deferred to Appendix~\ref{appendixsub:dyadic-sequence-estimates}.
Now we are ready to prove Theorem~\ref{thm:necessity}.

\begin{proof}[Proof of Theorem~\ref{thm:necessity}]
    We use contradiction. Suppose \( \sum_{j=1}^{\infty} h_j^{-2} = \infty \).

    We choose \( m \) sufficiently large that all preceding eventual bounds hold. Let \( T_{m+1} :=
    (2 - \sqrt{2}) \mathcal{Q}_m  \). By Lemma~\ref{lem:deterministic-progress-obstruction}, $S_n \to +\infty$. Hence, for sufficiently large $m$, $\mathcal{Q}_m >0$, and therefore $T_{m+1}>0$. By Lemma~\ref{lem:energy-amplification}, \( T_{m+1} \leq
    \mathcal{Q}_{m+1} \). We recursively define
    \[
        T_{J+1} := T_{J} + \kappa \frac{T_J^2}{h_J^2}, \quad \forall J > m, \quad 0 < \kappa \leq
        (\sqrt{2} - 1)^2 \frac{z_{\alpha}^4 \sigma^4}{2^{12} B^2}.
    \]
    Then by Lemma~\ref{lem:energy-amplification} and induction, we have \( \mathcal{Q}_J \geq T_J \)
    for all \( J > m \). By Lemma~\ref{lem:step-upper-bound}, we have \( \kappa T_J / h_J^2 \leq \kappa C_q
    B^2 \sigma^{-4} / J \leq 1 \) for every sufficiently large \( J \). Therefore, we have
    \[
        \frac{1}{T_J} - \frac{1}{T_{J+1}} = \frac{T_{J+1} - T_J}{T_J T_{J+1}} = 
        \frac{\kappa T_J^2 / h_J^2}{T_J(T_J + \kappa T_J^2 / h_J^2)} = \frac{\kappa / h_J^2}{1 + \kappa
        T_J / h_J^2} \geq \frac{\kappa}{2 h_J^2}.
    \]
    Fix an index \( J_1 \) from which this inequality holds. Summing from \(J_1\) to \( \infty \) gives
    \[
        \frac{1}{T_{J_1}} \geq \sum_{j=J_1}^{\infty} \frac{1}{T_j} - \frac{1}{T_{j+1}} \geq
        \sum_{j=J_1}^{\infty} \frac{\kappa}{2 h_j^2} = \infty,
    \]
    which is a contradiction. Therefore, we must have \( \sum_{j=1}^{\infty} h_j^{-2} < \infty \).
\end{proof}

%% file: sections/conclusion.tex
\section{Conclusion}\label{sec:conclusion}

We have characterized the exact time-uniform convergence frontier of standard SGD on general smooth convex objectives: achievable overhead profiles are exactly those satisfying reciprocal-square summability \(\sum_{j=1}^{\infty} h(2^j)^{-2} < \infty\). The resulting picture is governed by reciprocal-square summability: rather than admitting a single optimal overhead, the achievable class has no asymptotically smallest element and can approach the \(\sqrt{\log n / n}\) arbitrarily closely without attaining it. Our necessity result shows that this obstruction already arises for a one-dimensional analytic smooth convex objective with Gaussian noise, even when the schedule may depend on the confidence level. Thus, the frontier is intrinsic to time-uniform raw-iterate control in the general noncontractive setting, rather than an artifact of nonsmoothness, projection, high dimension, or confidence-independent tuning. A natural next question is whether the same frontier persists for adaptive or randomized stepsize rules.

% The sufficiency result follows from a dyadic schedule of finite squared-step energy, an additive conditional-restart inequality, and a summable confidence allocation. For the necessity part, a one-dimensional quartic-flat Gaussian witness and block anti-concentration impose epochwise constraints on the stepsize mass, leading to a Riccati-type recurrence whose compatibility with the dyadic energy upper bound forces reciprocal-square summability. Since the construction uses analytic smooth convex objectives and Gaussian, possibly degenerate, noise, the obstruction is intrinsic rather than an artifact of non-smoothness, projection, high dimension, or state-dependent noise.

% The necessity argument in fact holds for a one-dimensional analytic smooth convex objective with Gaussian noise, at each fixed confidence level, even allows the deterministic schedule to depend on that confidence level. Thus, the obstruction is intrinsic to the general noncontractive setting rather than an artifact of non-smoothness, projection, high dimension, or confidence-independent tuning.

% Natural next questions are whether the same frontier persists for adaptive or randomized stepsizes, composite or non-Euclidean methods, and heavy-tailed noise, as well as whether observable time-uniform certificates with sharper finite-sample constants can be obtained.

%% file: sections/AI-statement.tex
\section*{AI use statement}

In this work, we used GPT-5.6 Sol to assist in developing and refining proof strategies for the necessity direction, including suggesting intermediate arguments and technical derivations. The mathematical claims were formulated by the authors, and all AI-assisted proof arguments were independently checked and manually verified by the authors. We did not use generative AI tools for generating the proof in the sufficiency direction. Specifically, GPT-6-Astra was used to assist in translating the mathematical development into Lean 4 and constructing the supplementary formalization. The resulting formalization was checked by the Lean kernel and independently audited for theorem correspondence, dependencies, and unintended axioms. GPT-5.6 Sol and Gemini 3.1 Pro was used to polish the writing of the introduction and related work sections. We take responsibility for the final content of this work, including text, claims, or artifacts produced with the aid of generative AI.

%% file: sections/Reproducibility-statement.tex
\section*{Reproducibility statement}

Complete proofs of all mathematical claims are provided in the appendices. We additionally provide an anonymized Lean 4 formalization of the targeted mathematical results as supplementary material, together with theorem-correspondence and proof-integrity audits, as well as instructions for rebuilding the formalization.

%% file: sections/Appendix-A-Deferred-proofs-from-section-2.tex
\section{Deferred proofs from Section~\ref{sec:preliminaries}}
\label{appendix:deferred-proofs-section2}
\begin{proof}[Proof of Corollary~\ref{cor:no-smallest-profile}]
    Let \( h \in \mathcal{H}_{\mathrm{ach}} \). By Theorem~\ref{thm:main}, we have
    \[
        \sum_{j=1}^{\infty} \frac{1}{h(2^j)^2} < \infty.
    \]
    Let \( h_{j} = h(2^j) \) and \(a_{j} = h_{j}^{-2}\).
    Then \(\sum_{j \geq 1} a_j < \infty\). In particular, \(a_j \to 0\) and \(h_j \to \infty\). Since \(h\) is eventually nondecreasing, there exists \(J_* \geq 1\) such that \(h\) is nondecreasing on \([2^{J_*}, \infty)\). Also, by the convergence of the series, we can choose a strictly increasing sequence of integers \(J_* \leq J_1 < J_2 < \cdots\) such that for every \(k \geq 1\), \(\sum_{j = J_k}^{\infty} a_j < 2^{-k} k^{-2}\). Now we define a positive sequence \(\{c_k\}_{k \geq 1}\) by
    \[
        c_j:= \begin{cases} 1, & 1 \le j < J_1 \\
            k, & J_k \le j < J_{k+1},\quad k \ge 1
        \end{cases}
    \]
    Then \(c_j \to \infty\). Moreover,
    \[
        \sum_{j=1}^{\infty} c_j a_j = \sum_{j=1}^{J_1-1} a_j + \sum_{k=1}^{\infty} k \sum_{j=J_k}^{J_{k+1}-1} a_j \leq \sum_{j=1}^{J_1-1} a_j + \sum_{k=1}^{\infty} k \sum_{j=J_k}^{\infty} a_j < \sum_{j=1}^{J_1-1} a_j + 
        \sum_{k=1}^{\infty} \frac{2^{-k}}{k} < \infty.
    \]
    Now define \(r_j := 1/\sqrt{c_j a_j} = h_j / \sqrt{c_j}\) and \(g_j := \max_{1 \leq i \leq j} r_i\) for \(j \geq 1\). Then \(\{g_j\}_{j \geq 1}\) is positive and nondecreasing. Since \(g_j \geq r_j\), \(g_j^{-2} \leq r_j^{-2} = c_j a_j\). Therefore,
    \[
        \sum_{j=1}^{\infty} \frac{1}{g_j^2} \leq \sum_{j=1}^{\infty} c_j a_j < \infty.
    \]
    We next prove \(g_j = o(h_j)\). For any \(\varepsilon > 0\), there exists an integer \(K \in \mathbb{N}_{+}\) such that \(K^{-1/2} < \varepsilon\). For every \(i \geq J_K\), we have \(c_i \geq K\). Since \(J_K \geq J_*\), for every \(J_K \leq i \leq j\) we have \(h_i \leq h_j\). Hence,
    \[
        r_i = \frac{h_i}{\sqrt{c_i}} \leq \frac{h_j}{\sqrt{K}} < \varepsilon h_j.
    \]
    Now for the finitely many indices \(i < J_K\), define \(M_K := \max_{1 \leq i < J_K} r_i\) with the convention \(M_K := 0\) if \(J_K = 1\). Since \(h_j \to \infty\), there exists \(J' \geq J_K\) such that \(M_K < \varepsilon h_j\) for any \(j \geq J'\). Thus, for every \(j \geq J'\), 
    \[
        g_j = \max_{1 \leq i \leq j} r_i \leq \varepsilon h_j,
    \]
    which implies \(g_j / h_j \to 0\) as \(j \to \infty\). Finally, we define \(g: \mathbb{N}_+ \to (0, \infty)\) by
    \[
        g(n) := g_{\max \{ 1, \lfloor \log_2 n \rfloor \}}.
    \]
    Now we only need to prove \(g(n) = o(h(n))\). For sufficiently large \(n\), let \(j = \lfloor \log_2 n \rfloor\). Then \(2^j \leq n < 2^{j+1}\), and eventual monotonicity of \(h\) yields \(h(n) \geq h(2^j) = h_j\). Therefore,
    \[
        0 \leq \frac{g(n)}{h(n)} = \frac{g_j}{h(n)} \leq \frac{g_j}{h_j} \to 0.
    \]
    Hence \(g(n) = o(h(n))\). Since \(h(n) = o(\sqrt{n})\), it follows that \(g(n) = o(\sqrt{n})\) and therefore \(g \in \mathcal{H}_0\). Moreover,
    \[
        \sum_{j=1}^{\infty} \frac{1}{g(2^j)^2} = \sum_{j=1}^{\infty} \frac{1}{g_j^2} < \infty.
    \]
    By Theorem \ref{thm:main}, \(g \in \mathcal H_{\mathrm{ach}}\). Since \(g(n) = o(h(n))\), every \(h \in \mathcal H_{\mathrm{ach}}\) admits another \(g \in \mathcal H_{\mathrm{ach}}\) that is asymptotically strictly smaller. Therefore, \(\mathcal H_{\mathrm{ach}}\) has no asymptotically smallest element.
\end{proof}

%% file: sections/Appendix-B-Proofs-and-auxiliary-results-for-sufficiency.tex
\section{Proofs and auxiliary results for sufficiency}

\input{sections/Appendix-B-1-conditional-restart}

\input{sections/Appendix-B-2-global-radius}

\input{sections/Appendix-B-3-sufficiency-proof}

%% file: sections/Appendix-B-1-conditional-restart.tex
\subsection{Additive conditional restart theorem: proof and supporting lemmas}
\label{appendix:additive-conditional-restart}

Let \( y \in \mathcal{H} \) be an arbitrary comparator. We define the squared-distance function
\( D: \mathcal{H} \times \mathcal{H} \to \mathbb{R}_{+} \cup \{ 0 \} \) by \( D(a, b) = \|a-b\|^2/2 \).

\subsubsection{Local smooth geometry}

Our analysis is based on two geometric lemmas.

\begin{lemma} \label{lem:smooth-convex-interpolation}
    For any \( x, u, z \in \mathcal{H} \),
    \[
        f(u) - f(z) \leq \langle \nabla f(x), u - z \rangle + \frac{L}{2} \|u - x\|^2.
    \]
\end{lemma}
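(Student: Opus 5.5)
The plan is to split the left-hand side at the auxiliary point \(x\):
\[
f(u)-f(z) = \bigl(f(u)-f(x)\bigr) + \bigl(f(x)-f(z)\bigr).
\]
Each bracket is then bounded by a standard first-order inequality for \(L\)-smooth convex functions, and the two bounds are added.

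For the first bracket, we use the descent lemma, i.e.\ the quadratic upper bound implied by \(L\)-smoothness (Lipschitz continuity of \(\nabla f\) with constant \(L\)):
\[
f(u) \leq f(x) + \langle \nabla f(x), u - x\rangle + \frac{L}{2}\|u-x\|^2 .
\]
This holds in any real Hilbert space. It follows from the fundamental theorem of calculus along the segment \([x,u]\), which applies because \(f\) is Fréchet differentiable:
\[
f(u)-f(x)-\langle\nabla f(x),u-x\rangle=\int_0^1\langle \nabla f(x+s(u-x))-\nabla f(x),u-x\rangle\,\mathrm{d}s .
\]
By Cauchy--Schwarz and the Lipschitz property, the integrand is at most \(Ls\|u-x\|^2\), and integrating over \(s\in[0,1]\) gives \(\frac{L}{2}\|u-x\|^2\).

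For the second bracket, we use the first-order characterization of convexity at \(x\):
\[
f(z)\geq f(x)+\langle\nabla f(x),z-x\rangle,
\]
that is, \(f(x)-f(z)\leq \langle \nabla f(x), x-z\rangle\). This follows from convexity of \(t\mapsto f(x+t(z-x))\) together with differentiability at \(t=0\).

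Adding the two inequalities, the linear terms combine as
\[
\langle\nabla f(x),u-x\rangle+\langle\nabla f(x),x-z\rangle=\langle\nabla f(x),u-z\rangle,
\]
which is exactly the claimed bound.

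There is no real obstacle here. The only point needing care is justifying the descent lemma in a possibly infinite-dimensional Hilbert space. That is handled by reducing to the one-dimensional function \(s\mapsto f(x+s(u-x))\), which is \(C^1\) because \(\nabla f\) is Lipschitz, so the integral identity above is valid.
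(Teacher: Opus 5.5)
Your proof is correct and matches the paper's argument: the paper also adds the quadratic upper bound from $L$-smoothness at $x$ to the first-order convexity inequality $f(x)-f(z)\le\langle\nabla f(x),x-z\rangle$. Your integral-along-the-segment justification of the descent lemma in a Hilbert space is a valid extra detail that the paper takes as standard.
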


\begin{proof}
    By the \( L \)-smoothness of \( f \), we have
    \[
        f(u) \leq f(x) + \langle \nabla f(x), u - x \rangle + \frac{L}{2} \|u - x\|^2.
    \]
    By the convexity of \( f \), we have
    \[
        f(x) - f(z) \leq \langle \nabla f(x), x - z \rangle.
    \]
    Adding these two inequalities yields
    \[
        f(u) - f(z) \leq \langle \nabla f(x), u - z \rangle + \frac{L}{2} \|u - x\|^2.
    \]
\end{proof}

\begin{lemma} \label{lem:one-step-inequality}
    Let \( x^+ = x - \eta (\nabla f(x) + \zeta) \) with \( 0 < \eta \leq 1 / (2L) \). Then for any
    \( z \in \mathcal{H} \),
    \[
        f(x^+) - f(z) \leq \frac{D(z, x) - D(z, x^+)}{\eta} + \langle \zeta, z - x \rangle + 
        \eta \|\zeta\|^2.
    \]
\end{lemma}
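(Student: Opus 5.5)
We prove Lemma~\ref{lem:one-step-inequality} by applying Lemma~\ref{lem:smooth-convex-interpolation} with \(u = x^+\) and the given \(z\), and then handling the inner-product term through the polarization identity for the squared distance \(D\). By Lemma~\ref{lem:smooth-convex-interpolation},
\[
    f(x^+) - f(z) \leq \langle \nabla f(x), x^+ - z \rangle + \frac{L}{2}\|x^+ - x\|^2 .
\]
Write \(g := \nabla f(x) + \zeta\), so that \(x^+ = x - \eta g\) and \(\nabla f(x) = g - \zeta\). Then
\[
    \langle \nabla f(x), x^+ - z\rangle = \langle g, x^+ - z\rangle - \langle \zeta, x^+ - z\rangle .
\]

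For the first piece, we use \(g = (x - x^+)/\eta\) together with the three-point identity
\[
    \langle x - x^+, x^+ - z\rangle = D(z,x) - D(z,x^+) - D(x^+,x).
\]
This gives
\[
    \langle g, x^+ - z\rangle = \frac{D(z,x) - D(z,x^+)}{\eta} - \frac{\|x^+ - x\|^2}{2\eta}.
\]
For the noise piece, we split \(x^+ - z = (x - z) - \eta g\), so that
\[
    -\langle \zeta, x^+ - z\rangle = \langle \zeta, z - x\rangle + \eta\langle \zeta, g\rangle .
\]
The term \(\langle \zeta, z - x\rangle\) is exactly the martingale term in the target bound. Collecting everything, it remains to show
\[
    \eta\langle \zeta, g\rangle - \frac{\|x^+ - x\|^2}{2\eta} + \frac{L}{2}\|x^+ - x\|^2 \leq \eta\|\zeta\|^2 .
\]

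The main, and only mildly delicate, step is this final absorption, where the factor in \(\eta \le 1/(2L)\) is used. Since \(\|x^+ - x\| = \eta\|g\|\) and \(L\eta \leq 1/2\), the left-hand side equals
\[
    \eta\Bigl(\langle \zeta, g\rangle - \tfrac{1-L\eta}{2}\|g\|^2\Bigr) \leq \eta\Bigl(\langle \zeta, g\rangle - \tfrac14\|g\|^2\Bigr).
\]
By Young's inequality, \(\langle \zeta, g\rangle \leq \|\zeta\|^2 + \tfrac14\|g\|^2\), so the bracket is at most \(\|\zeta\|^2\). This yields the claimed inequality. It in fact holds with room to spare: the constant \(1\) in front of \(\eta\|\zeta\|^2\) could be lowered to \(1/(2(1-L\eta)) \le 1\).
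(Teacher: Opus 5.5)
Your proof is correct and follows essentially the same route as the paper. You apply the interpolation lemma with \(u=x^+\), use the three-point identity, split \(-\langle\zeta,x^+-z\rangle=\langle\zeta,z-x\rangle+\eta\langle\zeta,g\rangle\), and absorb the cross term using \(1-L\eta\ge 1/2\). Your Young step \(\langle\zeta,g\rangle\le\|\zeta\|^2+\tfrac14\|g\|^2\) is the same completing-the-square as the paper's identity \(\eta\langle\zeta,\nabla f(x)\rangle-\tfrac{\eta}{4}\|\nabla f(x)+\zeta\|^2=-\tfrac{\eta}{4}\|\nabla f(x)-\zeta\|^2\), and your sharper constant \(1/(2(1-L\eta))\) is also valid.
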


\begin{proof}
    Applying Lemma~\ref{lem:smooth-convex-interpolation} with \( u = x^+ \) gives
    \begin{align*}
        f(x^+) - f(z) &\leq \langle \nabla f(x), x^+ - z \rangle + \frac{L \eta^2}{2} 
        \|\nabla f(x) + \zeta\|^2 \\
        &= \frac{\langle \eta(\nabla f(x) + \zeta), x^+ - z \rangle}{\eta} - \langle \zeta, x^+ - z \rangle
        + \frac{L \eta^2}{2} \|\nabla f(x) + \zeta\|^2 \\
    \end{align*}
    By the identity
    \[
        \|a + b\|^2 = \|a\|^2 + 2 \langle a, b \rangle + \|b\|^2,
    \]
    we have
    \[
        \langle \eta(\nabla f(x) + \zeta), x^+ - z \rangle = -\frac{1}{2} \eta^2 \|\nabla f(x) + \zeta\|^2 
        - \frac{1}{2} \|x^+ - z\|^2 + \frac{1}{2} \|x - z\|^2.
    \]
    Hence, we have
    \[
        f(x^+) - f(z) \leq \frac{D(x, z) - D(x^+, z)}{\eta} - \langle \zeta, x^+ - z \rangle 
        + \frac{\eta}{2}(L\eta - 1) \|\nabla f(x) + \zeta\|^2
    \]
    Notice that
    \[
        \eta \langle \zeta, \nabla f(x) \rangle - \frac{\eta}{4} \|\nabla f(x) + \zeta\|^2 
        = -\frac{\eta}{4} \|\nabla f(x) - \zeta\|^2 \leq 0.
    \]
    Therefore, we have
    \[
        f(x^+) - f(z) \leq \frac{D(x, z) - D(x^+, z)}{\eta} + \langle \zeta, z - x \rangle 
        + \eta \|\zeta\|^2.
    \]
\end{proof}

Let \( \rho \in [0, 1] \) and set \( z = (1 - \rho) x + \rho q \). Since \( D(z, x) = \rho^2 D(q, x)
\leq \rho D(q, x) \), Lemma~\ref{lem:one-step-inequality} gives
\begin{equation*}
    f(x^+) - f(z) \leq \rho \langle \zeta, q - x \rangle + \frac{1}{\eta} (\rho D(q, x) - D(z, x^+))
    + \eta \|\zeta\|^2.
\end{equation*}

\subsubsection{Terminal weights and barycentric identities}

In the following, consider the iteration
\[
    X^{t+1} = X^{t} - \eta_{t} (\nabla f(X^{t}) + Z^{t}), \quad 1 \leq t \leq T, 0 < \eta_{t} \leq
    \frac{1}{2L}.
\]
for some \( T \geq 1 \).

Our objective in this section is to bound the global error \( f(X^{T+1}) - f(y) \) evaluated at an
arbitrary comparator \( y \in \mathcal{H} \). Since the one-step inequality in Lemma~\ref{lem:one-step-inequality}
bounds the single-step error \( f(X^{t+1}) - f(z^t) \) relative to a reference
point \( z^{t} \), we need to construct a linear combination of these single-step errors that
telescopes exactly to the global error.

Let \( a_{t} > 0 \) denote the linear-combination coefficient at time \(t\). We consider the corresponding
weighted sum of single-step errors. To relate \( f(z^{t}) \) to the iterates \( f(X^{t}) \), we define
\( z^{t} \) via a barycentric recursion. Let \( \{ v_{t} \}_{t=1}^{T} \) be a positive,
nondecreasing sequence of parameters with \( 0 < v_{0} = v_{1} \) and \(v_{T} = 1\), and define
\[
    z^{t} := \bigg( 1 - \frac{v_{t-1}}{v_{t}} \bigg) X^{t} + \frac{v_{t-1}}{v_{t}} z^{t-1}, \quad
    z^{0} := y.
\]
By applying convexity to this recursion, we have
\[
    v_{t} f(z^{t}) \leq (v_t - v_{t-1}) f(X^{t}) + v_{t-1} f(z^{t-1}) \leq v_{0} f(y) + \sum_{s=1}^{t}
    (v_{s} - v_{s-1}) f(X^{s}).
\]
Substituting this inequality into the linear combination of single-step errors, we have
\begin{align*}
    \sum_{t=1}^{T} a_{t} (f(X^{t+1}) - f(z^{t})) &\geq \sum_{t=1}^{T} a_{t} f(X^{t+1}) - \sum_{t=1}^{T}
    \frac{a_{t}}{v_{t}} \bigg( v_{0} f(y) + \sum_{s=1}^{t} (v_{s} - v_{s-1}) f(X^{s}) \bigg) \\
    &= \sum_{t=2}^{T} \bigg( a_{t-1} - (v_{t} - v_{t-1}) \sum_{k=t}^{T} \frac{a_{k}}{v_{k}} \bigg) f(X^t) \\
    &\quad - v_{0} f(y) \sum_{t=1}^{T} \frac{a_{t}}{v_{t}} + a_{T} f(X^{T+1}). \\
\end{align*}
Let \( a_{t-1} = (v_{t} - v_{t-1}) \sum_{k=t}^{T} \frac{a_{k}}{v_{k}} \). Then the first term vanishes.
Let \( b_{t} = \frac{a_{t}}{v_{t}} \) and \( W_{t} = \sum_{k=t}^{T} b_{k} \). Then we have
\( a_{t-1} = (v_{t} - v_{t-1}) W_{t} \). On the other hand,
\( a_{t-1} = b_{t-1} v_{t-1} \). Hence, we have the following identity:
\[
    b_{t-1} v_{t-1} = (v_{t} - v_{t-1}) W_{t}, \quad 2 \leq t \leq T.
\]
Also note that \( b_{t-1} = W_{t-1} - W_{t} \). Substituting this into the previous identity, we have
\[
    v_{t-1} W_{t-1} = v_{t} W_{t} \implies v_{t} W_{t} = v_{T} W_{T} = a_{T}, \quad 2 \leq t \leq T.
\]
Since \( v_{0} = v_{1} \), we have \( v_{0} W_{1} = v_{1} W_{1} = a_{T} \). Substituting this
back into the previous inequality, we have
\[
    \sum_{t=1}^{T} a_{t} (f(X^{t+1}) - f(z^{t})) \geq a_{T} (f(X^{T+1}) - f(y)).
\]
Now let \( b_{t} = w_{t} \eta_{t} \), where \( w_{t} \) is a nonincreasing, nonnegative weight sequence.
Then \( a_{t} = w_{t} \eta_{t} v_{t} \). Let \( a_{T} = C \). Then we have the following lemma.

\begin{lemma} \label{lem:deterministic-master-inequality}
    The following inequality holds:
    \[
        C(f(X^{T+1}) - f(y)) \leq w_{1} v_{0} D(y, X^{1}) + \sum_{t=1}^{T} c_{t} \|Z^{t}\|^2
        + \sum_{t=1}^{T} \beta_{t} \langle Z^{t}, d_{t} \rangle + \mathcal{V}_{T},
    \]
    where \( c_t = w_{t} \eta_{t}^2 v_{t} \), \( \beta_{t} = w_{t} \eta_{t} v_{t-1} \), \( d_{t} = z^{t-1}
    - X^{t} \), and
    \[
        \mathcal{V}_{T} = \sum_{t=2}^{T} (w_{t} - w_{t-1}) v_{t-1} D(z^{t-1}, X^{t}) \leq 0.
    \]
\end{lemma}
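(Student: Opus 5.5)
We will apply the barycentric one-step inequality stated just before this subsection (the consequence of Lemma~\ref{lem:one-step-inequality} with $z=(1-\rho)x+\rho q$). At each $t\in\{1,\dots,T\}$ we take $x=X^t$, $x^+=X^{t+1}$, $\zeta=Z^t$, $\eta=\eta_t$, $q=z^{t-1}$ and $\rho=v_{t-1}/v_t\in(0,1]$. With these choices $z=z^t$ by the barycentric recursion, and we get
\[
    f(X^{t+1})-f(z^t)\le \frac{v_{t-1}}{v_t}\langle Z^t, z^{t-1}-X^t\rangle+\frac{1}{\eta_t}\Big(\frac{v_{t-1}}{v_t}D(z^{t-1},X^t)-D(z^t,X^{t+1})\Big)+\eta_t\|Z^t\|^2 .
\]
We then multiply by $a_t=w_t\eta_t v_t$ and sum over $t$. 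The left side is bounded below by $C(f(X^{T+1})-f(y))$, by the weighted barycentric identity already derived (with $a_T=C$).

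The right side splits into three groups. First, the noise-inner-product terms become $w_t\eta_t v_{t-1}\langle Z^t,d_t\rangle=\beta_t\langle Z^t,d_t\rangle$. Second, the squared-noise terms become $w_t\eta_t^2v_t\|Z^t\|^2=c_t\|Z^t\|^2$. Third, the distance terms become
\[
    \sum_{t=1}^T\big(w_t v_{t-1}D(z^{t-1},X^t)-w_tv_tD(z^t,X^{t+1})\big).
\]
This last sum is a near-telescoping series, and we regroup it by pairing the negative part at index $t-1$ with the positive part at index $t$. Doing so leaves the first positive term $w_1v_0D(z^0,X^1)=w_1v_0D(y,X^1)$. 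It also leaves the cross terms $(w_t-w_{t-1})v_{t-1}D(z^{t-1},X^t)$ for $2\le t\le T$, which together make up $\mathcal V_T$. Finally, the last negative term $-w_Tv_TD(z^T,X^{T+1})\le0$ can be dropped.

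Since $w$ is nonincreasing and $D\ge0$, $v_{t-1}>0$, every summand of $\mathcal V_T$ is nonpositive, so $\mathcal V_T\le0$. The only care needed is the index bookkeeping in this regrouping, and checking that $\rho=v_{t-1}/v_t\le1$, which holds because $v$ is nondecreasing. I expect no substantive obstacle beyond these two checks.
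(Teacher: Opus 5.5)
Your proposal is correct and matches the paper's proof. You apply the barycentric form of Lemma~\ref{lem:one-step-inequality} at each step with $q=z^{t-1}$ and $\rho=v_{t-1}/v_t$, weight by $a_t=w_t\eta_t v_t$, invoke the barycentric lower bound $C(f(X^{T+1})-f(y))$, and regroup the distance terms into $w_1v_0D(y,X^1)+\mathcal{V}_T-w_Tv_TD(z^T,X^{T+1})$, exactly as the paper does, and your checks that $\rho\in[0,1]$ and that the index shift is right are handled correctly.
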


\begin{proof}
    Applying Lemma~\ref{lem:one-step-inequality} to each term in the summation \( \sum_{t=1}^{T} a_{t} 
    (f(X^{t+1}) - f(z^{t})) \), we have
    \begin{align*}
        &\quad\sum_{t=1}^T a_t\bigl(f(X^{t+1})-f(z^t)\bigr) \\
        &\le
        \sum_{t=1}^T w_t v_{t-1} D(z^{t-1},X^t)
        -\sum_{t=1}^T w_t v_t D(z^t,X^{t+1}) 
        +\sum_{t=1}^T c_t\lVert Z^t\rVert^2
        +\sum_{t=1}^T \beta_t\langle Z^t,d_t\rangle \\
        &=
        w_1 v_0 D(y,X^1)
        +\sum_{t=2}^T (w_t-w_{t-1})v_{t-1}D(z^{t-1},X^t) \\
        &\quad
        -w_Tv_TD(z^T,X^{T+1})
        +\sum_{t=1}^T c_t\lVert Z^t\rVert^2
        +\sum_{t=1}^T \beta_t\langle Z^t,d_t\rangle \\
        &\le
        w_1v_0D(y,X^1)
        +\sum_{t=1}^T c_t\lVert Z^t\rVert^2
        +\sum_{t=1}^T \beta_t\langle Z^t,d_t\rangle
        +V_T .
    \end{align*}
    where the last inequality follows from the fact that \( -w_{T} v_{T} D(z^T, X^{T+1}) \leq 0 \) and
    \[
        \mathcal{V}_{T} = \sum_{t=2}^{T} (w_{t} - w_{t-1}) v_{t-1} D(z^{t-1}, X^t).
    \]
    Since \( w_{t} \) is nonincreasing, \( \mathcal{V}_{T} \leq 0 \).
\end{proof}

\subsubsection{Proof of the additive conditional restart theorem}

Let \( w_{t} = T - t + 1 \). Consider the constant stepsize \( \eta_{t} = \gamma \) for \( 1 \leq 
t \leq T \). Then \( b_{t} = w_{t} \eta_{t} = \gamma w_{t} \) and \( W_{t} = \sum_{k=t}^{T} b_{k}
= \gamma w_{t} (w_{t} + 1) / 2 \). We also have \( C = a_{T} = w_{T} \gamma = \gamma \). Then by
the identity \( v_{t} W_{t} = C \), we have
\[
    v_{t} = \frac{C}{W_{t}} = \frac{2}{ w_{t} (w_{t} + 1)}.
\]
Substituting these expressions into Lemma~\ref{lem:deterministic-master-inequality} gives
\[
    f(X^{T+1}) - f(y) \leq \frac{2 D(y, X^1)}{\gamma(T+1)} + \sum_{t=1}^{T} w_{t} \gamma v_{t} \|Z^{t}\|^2
    + \sum_{t=1}^{T} w_{t} v_{t-1} \langle Z^{t}, d_{t} \rangle - \sum_{t=2}^{T} \frac{v_{t-1}}{2\gamma}
    \|d_{t}\|^2.
\]
To bound the error, we need the following two lemmas.

\begin{lemma} \label{lem:norm-martingale}
    For any \( \delta \in (0, 1) \), the following inequality holds:
    \[
        \mathbb{P} \bigg( \sum_{t=1}^{T} w_{t} \gamma v_{t} \|Z^{t}\|^2 \leq 2 \sigma^2 \gamma
        \mathsf{H}_{T} + \sigma^2 \gamma \log \frac{1}{\delta} \bigg) \geq 1 - \delta,
    \]
    where \( \mathsf{H}_{T} = \sum_{i=1}^{T} \frac{1}{i} \).
\end{lemma}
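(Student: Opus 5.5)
We compute the weights explicitly. Since $w_t = T-t+1$ and $v_t = 2/(w_t(w_t+1))$, each coefficient is
\[
    w_t\gamma v_t = \frac{2\gamma}{w_t+1} = \frac{2\gamma}{T-t+2}.
\]
Denote these by $\lambda_t := 2\gamma/(T-t+2)$. The sum $\sum_t \lambda_t\|Z^t\|^2$ is then a weighted sum of nonnegative random variables. Each one is conditionally norm-sub-Gaussian: $\mathbb{E}[\exp(\|Z^t\|^2/\sigma^2)\mid\mathcal{F}_t]\le \mathrm{e}$. The plan is a Chernoff bound with parameter $\theta = 1/(\sigma^2\gamma)$, applied via iterated conditioning.

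\textbf{Step 1: one-step moment bound.} We need to control $\mathbb{E}[\exp(\theta\lambda_t\|Z^t\|^2)\mid\mathcal{F}_t]$. Here $\theta\lambda_t\sigma^2 = 2/(T-t+2)\le 1$, because $T-t+2\ge 2$. For $p:=\theta\lambda_t\sigma^2\in(0,1]$, Jensen's inequality (concavity of $u\mapsto u^p$) gives
\[
    \mathbb{E}\bigl[\exp(p\|Z^t\|^2/\sigma^2)\mid\mathcal{F}_t\bigr]\le \bigl(\mathbb{E}[\exp(\|Z^t\|^2/\sigma^2)\mid\mathcal{F}_t]\bigr)^p\le \mathrm{e}^{p}.
\]
So each factor contributes at most $\exp(2/(T-t+2))$.

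\textbf{Step 2: tower property.} Iterating the conditional expectations backward from $t=T$ to $t=1$ yields
\[
    \mathbb{E}\exp\Bigl(\theta\sum_t\lambda_t\|Z^t\|^2\Bigr)\le \exp\Bigl(\sum_{t=1}^T \frac{2}{T-t+2}\Bigr)=\exp\bigl(2(\mathsf{H}_{T+1}-1)\bigr)\le \exp(2\mathsf{H}_T).
\]
The last inequality uses $\mathsf{H}_{T+1}-1\le \mathsf{H}_T$. The iteration is valid because $\lambda_t$ is deterministic and $Z^t$ is $\mathcal{F}_{t+1}$-measurable.

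\textbf{Step 3: Markov's inequality.} Markov's inequality then gives
\[
    \mathbb{P}\Bigl(\theta\sum_t\lambda_t\|Z^t\|^2 > 2\mathsf{H}_T+\log(1/\delta)\Bigr)\le\delta.
\]
Multiplying through by $\sigma^2\gamma = 1/\theta$ gives exactly the claimed bound.

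The only delicate point is checking that $\theta\lambda_t\sigma^2\le 1$ for every $t$, so that the Jensen step is legitimate. This is precisely where the remaining-time weights $w_t$ matter. They turn the exponent into a harmonic sum and keep the confidence term additive rather than multiplicative. The same argument works conditionally on $\mathcal{F}_m$ if it is needed for the restart application.
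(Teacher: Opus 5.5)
Your proof is correct and is essentially the paper's argument. The paper normalizes the weights by $a_* = \max_t w_t\gamma v_t = \gamma$, which is exactly your choice $\theta\lambda_t\sigma^2 = \lambda_t/\gamma \le 1$; it uses the same conditional Jensen step to build an exponential supermartingale and concludes with Ville's inequality, where you use Markov's inequality at the fixed time $T$ (all that is needed here). Your explicit evaluation $\sum_{t}\lambda_t = 2\gamma(\mathsf{H}_{T+1}-1) \le 2\gamma\mathsf{H}_T$ also supplies the harmonic-sum computation that the paper leaves implicit.
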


\begin{proof}
    Let \(a_* = \max_{1 \leq i \leq T} \{ w_{i} \gamma v_{i} \}\). Let \( \{ \mathcal{F}_{t} \}_{t=1}^{T} \)
    be a filtration such that \( Z^{t} \) is measurable with respect to \( \mathcal{F}_{t} \). 
    Let
    \[
        N_{t} = \exp \bigg( \sum_{i=1}^{t} \frac{a_{i}}{a_{*}} \bigg( \frac{\|Z^{i}\|^2}{\sigma^2} - 1 \bigg)
         \bigg), \quad a_{i} = w_{i} \gamma v_{i}.
    \]
    By the sub-Gaussian property of \(Z^{t}\), we have
    \[
        \mathbb{E} \bigg[ \exp \bigg( \frac{\|Z^{t}\|^2}{\sigma^2} - 1 \bigg) \bigg| \mathcal{F}_{t-1} \bigg] 
        \leq 1.
    \]
    Since \(a_{t} / a_{*} \leq 1\), \( x^{a_{t} / a_{*}} \) is concave for \( x \geq 0 \). By Jensen's
    inequality, we have
    \[
        \mathbb{E} \bigg[ \exp \bigg( \frac{a_{t}}{a_{*}} \bigg( \frac{\|Z^{t}\|^2}{\sigma^2} - 1 \bigg) 
        \bigg) \bigg| \mathcal{F}_{t-1} \bigg] 
        \leq \bigg\{\mathbb{E} \bigg[ \exp \bigg( \frac{\|Z^{t}\|^2}{\sigma^2} - 1 \bigg) 
        \bigg| \mathcal{F}_{t-1} \bigg] \bigg\}^{a_{t} / a_{*}}
        \leq 1.
    \]
    Since
    \[
        \frac{N_{t+1}}{N_{t}} = \exp \bigg( \frac{a_{t+1}}{a_{*}} \bigg( \frac{\|Z^{t+1}\|^2}{\sigma^2} - 1
         \bigg)  \bigg),
    \]
    we have
    \[
        \mathbb{E}(N_{t+1} / N_{t} \mid \mathcal{F}_{t}) \leq 1 \implies 
        \mathbb{E}(N_{t+1} \mid \mathcal{F}_{t}) \leq N_{t}.
    \]
    Therefore, \( \{N_{t}\} \) is a supermartingale. By Ville's inequality, we have
    \[
        \mathbb{P}(N_{T} \leq 1 / \delta) \geq 1 - \delta, \quad \forall \delta \in (0, 1).
    \]
    The event \( N_{T} \leq 1 / \delta \) implies
    \[
        \sum_{t=1}^{T} w_{t} \gamma v_{t} \|Z^{t}\|^2 \leq 2 \sigma^2 \gamma
        \mathsf{H}_{T} + \sigma^2 \gamma \log \frac{1}{\delta},
    \]
    where \( \mathsf{H}_{T} = \sum_{i=1}^{T} \frac{1}{i} \).
    Therefore, we have
    \[
        \mathbb{P} \bigg( \sum_{t=1}^{T} w_{t} \gamma v_{t} \|Z^{t}\|^2 \leq 2 \sigma^2 \gamma
        \mathsf{H}_{T} + \sigma^2 \gamma \log \frac{1}{\delta} \bigg) \geq 1 - \delta,
    \]
\end{proof}

\begin{lemma} \label{lem:inner-product-martingale}
    For any \( \delta \in (0, 1) \), the following inequality holds:
    \[
        \mathbb{P} \bigg( \sum_{t=1}^{T} w_{t} v_{t-1} \langle Z^{t}, d_{t} \rangle - \sum_{t=2}^{T} 
        \frac{v_{t-1}}{2\gamma} \|d_{t}\|^2 \leq \frac{2 D(y, X^1)}{\gamma (T+1)^2} + 8
        \sigma^2 \gamma \log \frac{1}{\delta} \bigg) \geq 1 - \delta.
    \]
\end{lemma}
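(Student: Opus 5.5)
The plan is a single exponential supermartingale argument with a predictable direction, in the spirit of the proof of Lemma~\ref{lem:norm-martingale}. First note that each direction $d_t = z^{t-1} - X^t$ is determined by $y, X^1, \dots, X^t$. Hence it is measurable with respect to the $\sigma$-field generated before $Z^t$ is revealed, i.e.\ $d_t$ is predictable. When this lemma is used for Theorem~\ref{thm:additive-conditional-restart}, the comparator $y$ is $\mathcal{F}_m$-measurable, so this remains true conditionally.

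\textbf{Step 1 (directional MGF bound).} The basic tool is a directional moment generating function bound for conditionally centered norm-sub-Gaussian vectors. For every predictable $d$ and every $s\in\mathbb{R}$,
\[
\mathbb{E}\bigl[\exp(s\langle Z^t,d\rangle)\mid \mathcal{F}_{t-1}\bigr]\le \exp\bigl(2 s^2\sigma^2\|d\|^2\bigr).
\]
I would prove this by Taylor-expanding the exponential. The linear term vanishes because $\mathbb{E}[Z^t\mid\mathcal{F}_{t-1}]=0$. The higher terms are bounded using $|\langle Z,d\rangle|\le \|Z\|\|d\|$ and the moment bounds implied by $\mathbb{E}\exp(\|Z\|^2/\sigma^2)\le \mathrm{e}$. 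For large $|s|$ one instead uses $s\langle Z,d\rangle\le s^2\sigma^2\|d\|^2/4+\|Z\|^2/\sigma^2$. This is a standard computation, and the exact constant ($2$ versus something smaller) only affects the final $8$.

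\textbf{Step 2 (the process and the choice of $\lambda$).} Fix $\lambda>0$ and set
\[
M_t := \exp\Bigl(\sum_{i=1}^{t}\bigl[\lambda w_i v_{i-1}\langle Z^i,d_i\rangle - 2\lambda^2\sigma^2 w_i^2 v_{i-1}^2\|d_i\|^2\bigr]\Bigr).
\]
By Step 1, $M_t$ is a nonnegative supermartingale with $M_0=1$, so Ville's (or Markov's) inequality gives $\mathbb{P}(M_T\le 1/\delta)\ge 1-\delta$. On this event,
\[
\sum_t w_t v_{t-1}\langle Z^t,d_t\rangle \le \lambda^{-1}\log(1/\delta) + 2\lambda\sigma^2\sum_t w_t^2v_{t-1}^2\|d_t\|^2.
\]
It remains to choose $\lambda$ so that the quadratic compensator is absorbed by the negative terms $-v_{t-1}\|d_t\|^2/(2\gamma)$ for $t\ge2$. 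From $w_t=T-t+1$ and $v_{t-1}=2/((T-t+2)(T-t+3))$ we get
\[
w_t^2 v_{t-1} = \frac{2(T-t+1)^2}{(T-t+2)(T-t+3)}\le 2.
\]
Hence $2\lambda\sigma^2 w_t^2v_{t-1}^2 \le 4\lambda\sigma^2 v_{t-1}$, which is at most $v_{t-1}/(2\gamma)$ once $\lambda = 1/(8\gamma\sigma^2)$. This choice produces exactly the term $8\sigma^2\gamma\log(1/\delta)$.

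\textbf{Step 3 (the unpenalized $t=1$ term).} The $t=1$ term has no negative quadratic companion, and I expect this to be the only real obstacle, though a mild one. Here $d_1=y-X^1$ and $w_1v_0 = T\cdot 2/(T(T+1)) = 2/(T+1)$, so its compensator is
\[
2\lambda\sigma^2\cdot\frac{4}{(T+1)^2}\|y-X^1\|^2 = \frac{16\lambda\sigma^2 D(y,X^1)}{(T+1)^2}.
\]
With $\lambda = 1/(8\gamma\sigma^2)$ this equals $2D(y,X^1)/(\gamma(T+1)^2)$, which is precisely the first term on the right-hand side. Since $D(y,X^1)$ is measurable at the start of the block, it can be kept as a deterministic-given-the-past quantity.

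Combining Steps 2 and 3 on the event $\{M_T\le 1/\delta\}$ yields the stated inequality with probability at least $1-\delta$. If the constant in Step 1 comes out smaller, the same argument gives a sharper bound, which is still sufficient.
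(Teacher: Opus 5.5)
Your proposal is correct and matches the paper's proof essentially step for step. Both use the same exponential supermartingale with compensator $2\sigma^2\lambda^2 w_t^2 v_{t-1}^2\|d_t\|^2$, Ville's inequality, the bound $w_t^2 v_{t-1}\le 2$ (the paper writes it as $2\gamma w_t^2 v_{t-1}^2/v_{t-1}\le 4\gamma$) to absorb the $t\ge 2$ compensators with $\lambda=1/(8\sigma^2\gamma)$, and a separate treatment of the $t=1$ term producing $2D(y,X^1)/(\gamma(T+1)^2)$. Your Step 1 directional MGF bound with constant exactly $2$, which is what the stated $8$ requires, is already proved in the paper as \eqref{eq:conditional-directional-mgf} in Step 3 of the proof of Lemma~\ref{lem:global-radius}, so you can cite it instead of re-deriving it by Taylor expansion.
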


\begin{proof}
    Let \( \mathcal{F}_{t} \) be a filtration generated by \( Z^{1}, \ldots, Z^{t} \).
    The sub-Gaussian property of \( Z^{t} \) implies that for any \( \lambda > 0 \),
    \[
        \mathbb{E} [ \exp(\langle Z^{t}, \lambda w_{t} v_{t-1} d_{t} \rangle) \mid \mathcal{F}_{t-1} ] \leq 
        \exp(2 \sigma^2 \lambda^2 w_{t}^2 v_{t-1}^2 \|d_{t}\|^2). 
    \]
    Hence, we can construct a supermartingale \( M_{t} \) as follows:
    \[
        M_{t} = \exp \bigg( \lambda \sum_{i=1}^{t} w_{i} v_{i-1} \langle Z^{i}, d_{i} \rangle - 
        2 \sigma^2 \lambda^2 \sum_{i=1}^{t} w_{i}^2 v_{i-1}^2 \|d_{i}\|^2 \bigg)
    \]
    It is straightforward to verify that \( M_{t} \) is a supermartingale. By Ville's inequality, we have
    \[
        \mathbb{P}(M_{T} \leq 1 / \delta) \geq 1 - \delta, \quad \forall \delta \in (0, 1).
    \]
    The event \( M_{T} \leq 1 / \delta \) implies
    \[
        \sum_{t=1}^{T} w_{t} v_{t-1} \langle Z^{t}, d_{t} \rangle \leq 2 \sigma^2 \lambda \sum_{t=1}^{T}
        w_{t}^2 v_{t-1}^2 \|d_{t}\|^2 + \frac{\log(1 / \delta)}{\lambda}.
    \]
    Note that
    \begin{align*}
        2 \sigma^2 \lambda \sum_{t=1}^{T} w_{t}^2 v_{t-1}^2 \|d_{t}\|^2 &= 2 \sigma^2 \lambda w_{1}^2 v_{0}^2
        \|d_{1}\|^2 + 2 \sigma^2 \lambda \sum_{t=2}^{T} w_{t}^2 v_{t-1}^2 \|d_{t}\|^2 \\
        &= 2 \sigma^2 \lambda w_{1}^2 v_{1}^2 \|d_{1}\|^2 + 2 \sigma^2 \lambda \sum_{t=2}^{T} w_{t}^2 
        v_{t-1}^2 \|d_{t}\|^2 \\
        &= \frac{8 \sigma^2 \lambda}{(T + 1)^2} \|d_{1}\|^2 + 2 \sigma^2 \lambda \sum_{t=2}^{T} w_{t}^2 
        v_{t-1}^2 \|d_{t}\|^2.
    \end{align*}
    Observe that
    \[
        \frac{2 \gamma(w_{t}^2 v_{t-1}^2)}{v_{t-1}} = \frac{4 \gamma w_{t}^2}{(w_{t} + 1)(w_{t} + 2)} 
        \leq 4 \gamma.
    \]
    Hence, we have
    \[
        2 \sigma^2 \lambda \sum_{t=2}^{T} w_{t}^2 v_{t-1}^2 \|d_{t}\|^2 \leq 8 \sigma^2 \lambda \gamma
        \sum_{t=2}^{T} \frac{v_{t-1}}{2 \gamma} \|d_{t}\|^2.
    \]
    Taking \( \lambda = 1 / (8 \sigma^2 \gamma) \), we have
    \[
        \sum_{t=1}^{T} w_{t} v_{t-1} \langle Z^{t}, d_{t} \rangle - \sum_{t=2}^{T} \frac{v_{t-1}}{2\gamma}
        \|d_{t}\|^2 \leq \frac{2 D(y, X^1)}{\gamma (T+1)^2} + 8 \sigma^2 \gamma \log \frac{1}{\delta}.
    \]
    This holds with probability at least \( 1 - \delta \), or equivalently,
    \[
        \mathbb{P} \bigg( \sum_{t=1}^{T} w_{t} v_{t-1} \langle Z^{t}, d_{t} \rangle - \sum_{t=2}^{T} 
        \frac{v_{t-1}}{2\gamma} \|d_{t}\|^2 \leq \frac{2 D(y, X^1)}{\gamma (T+1)^2} + 8
        \sigma^2 \gamma \log \frac{1}{\delta} \bigg) \geq 1 - \delta.
    \]
\end{proof}

With the master inequality and the two martingale bounds established, we are ready to prove 
Theorem~\ref{thm:additive-conditional-restart}. We first prove the following lemma, which is a direct
consequence of Lemmas~\ref{lem:deterministic-master-inequality}, \ref{lem:norm-martingale}, and
\ref{lem:inner-product-martingale}.

\begin{lemma} \label{lem:terminal-inequality}
    The following inequality holds with probability at least \( 1 - \delta \):
    \[
        f(X^{T+1}) - f(y) \leq \frac{4 D(y, X^{1})}{\gamma T} + 6 \sigma^2 \gamma \mathsf{H}_{T} +
        9 \sigma^2 \gamma \log \frac{1}{\delta}.
    \]
\end{lemma}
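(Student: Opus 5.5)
The plan is to start from the specialized master inequality displayed just before Lemma~\ref{lem:norm-martingale}, which holds deterministically:
\[
    f(X^{T+1}) - f(y) \leq \frac{2 D(y, X^1)}{\gamma(T+1)} + \sum_{t=1}^{T} w_{t} \gamma v_{t} \|Z^{t}\|^2
    + \Bigl(\sum_{t=1}^{T} w_{t} v_{t-1} \langle Z^{t}, d_{t} \rangle - \sum_{t=2}^{T} \frac{v_{t-1}}{2\gamma}
    \|d_{t}\|^2\Bigr).
\]
I would bound the two stochastic groups on the right separately and then combine the resulting events with a union bound, splitting the failure probability as $\delta/2$ for each group.

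First, apply Lemma~\ref{lem:norm-martingale} at level $\delta/2$. With probability at least $1-\delta/2$, the quadratic noise term is at most
\[
    2\sigma^2\gamma\mathsf{H}_T+\sigma^2\gamma\log(2/\delta).
\]
Second, apply Lemma~\ref{lem:inner-product-martingale} at level $\delta/2$. With probability at least $1-\delta/2$, the bracketed cross term is at most
\[
    \frac{2D(y,X^1)}{\gamma(T+1)^2}+8\sigma^2\gamma\log(2/\delta).
\]
On the intersection of these two events, which has probability at least $1-\delta$, I collect terms. For the distance terms, $(T+1)^2\ge T+1$ and $T+1\ge T$ give
\[
    \frac{2D(y,X^1)}{\gamma(T+1)}+\frac{2D(y,X^1)}{\gamma(T+1)^2}\le\frac{4D(y,X^1)}{\gamma T}.
\]
The logarithmic terms sum to $9\sigma^2\gamma\log(2/\delta)=9\sigma^2\gamma\log(1/\delta)+9\sigma^2\gamma\log 2$.

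The main obstacle is the leftover constant $9\sigma^2\gamma\log 2\approx 6.24\,\sigma^2\gamma$. I intend to absorb it into the harmonic slack: the target allows $6\sigma^2\gamma\mathsf{H}_T$, while only $2\sigma^2\gamma\mathsf{H}_T$ has been used, so the slack is $4\sigma^2\gamma\mathsf{H}_T\ge 4\sigma^2\gamma$. This is not quite enough, since $9\log 2>4$. I would therefore allocate the failure probability asymmetrically rather than evenly. Give level $\delta_1$ to the quadratic term and $\delta_2=\delta-\delta_1$ to the cross term, and use $\log(1/\delta_i)=\log(1/\delta)+\log(\delta/\delta_i)$. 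Choosing $\delta_2=\theta\delta$ with $\theta$ close to $1$, the cross-term surplus is $8\log(1/\theta)$, which is small. The quadratic-term surplus is only $\log(1/(1-\theta))$, because that term carries coefficient $1$ rather than $8$.

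For example, take $\theta=0.9$. Then the surplus is
\[
    \bigl(8\log(10/9)+\log 10\bigr)\sigma^2\gamma\approx(0.84+2.30)\,\sigma^2\gamma=3.14\,\sigma^2\gamma\le 4\sigma^2\gamma\mathsf{H}_T,
\]
and the remaining logarithmic part is exactly $9\sigma^2\gamma\log(1/\delta)$. This yields
\[
    f(X^{T+1}) - f(y) \leq \frac{4 D(y, X^{1})}{\gamma T} + 6 \sigma^2 \gamma \mathsf{H}_{T} + 9 \sigma^2 \gamma \log \frac{1}{\delta}
\]
with probability at least $1-\delta$, which is the statement of the lemma.

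Before finalizing, I would check two things. The first is that both lemmas are valid at arbitrary levels in $(0,1)$; they are, by Ville's inequality. The second is that $\mathsf{H}_T\ge1$ for every $T\ge1$.
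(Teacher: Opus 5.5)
Your proposal is correct and matches the paper's proof: the paper also splits the failure probability asymmetrically, as $p\delta$ for Lemma~\ref{lem:norm-martingale} and $(1-p)\delta$ for Lemma~\ref{lem:inner-product-martingale}, and absorbs the leftover constant into $4\sigma^2\gamma\mathsf{H}_T$ using $\mathsf{H}_T\ge 1$. The only difference is that the paper takes the exact minimizer $p=1/9$ of $-\log p-8\log(1-p)$, whereas your $p=1/10$ is slightly suboptimal; both give a surplus of about $3.14\,\sigma^2\gamma\le 4\sigma^2\gamma$.
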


\begin{proof}
    We apply Lemma~\ref{lem:norm-martingale} with failure probability \( p \delta \) and 
    Lemma~\ref{lem:inner-product-martingale} with failure probability \( (1 - p) \delta \) for some
    \( p \in (0, 1) \). By the union bound, with probability at least \( 1 - \delta \),
    \begin{align*}
        f(X^{T+1}) - f(y) &\leq \frac{2D(y, X^{1})}{\gamma(T+1)} \bigg( 1 + \frac{1}{T+1} \bigg)
        + 2 \sigma^2 \gamma \mathsf{H}_{T} + 9 \sigma^2 \gamma \log \frac{1}{\delta} 
        - \sigma^2 \gamma (\log p + 8 \log (1 - p)) \\
        &\leq \frac{4D(y, X^{1})}{\gamma T} + 2\sigma^2 \gamma \mathsf{H}_{T} + 9 \sigma^2 \gamma
        \log \frac{1}{\delta} + \sigma^2 \gamma \bigg(\log 9 + 8 \log \frac{9}{8}\bigg) \\
        &\leq \frac{4D(y, X^{1})}{\gamma T} + 6\sigma^2 \gamma \mathsf{H}_{T} + 9 \sigma^2 \gamma
        \log \frac{1}{\delta},
    \end{align*}
    In the second inequality, we take \(p = 1/9\) to minimize the term \(-\log p - 8 \log (1 - p)\). Its minimum has absolute value at most \(4\), so the last inequality follows. Therefore,
    \[
        \mathbb{P} \bigg( f(X^{T+1}) - f(y) \leq \frac{4 D(y, X^{1})}{\gamma T} + 6 \sigma^2 \gamma
        \mathsf{H}_{T} + 9 \sigma^2 \gamma \log \frac{1}{\delta} \bigg) \geq 1 - \delta.
    \]
\end{proof}

For clarity, we restate Theorem~\ref{thm:additive-conditional-restart}.

\begin{theorem}
    Fix $L,R,\sigma>0$, an instance $\mathcal{P}\in\mathfrak{P}_{\mathrm{cvx}}(L,R,\sigma)$, and a deterministic nonnegative stepsize schedule $\eta$. Let $m\in\mathbb{N}$ and $r\in\mathbb{N}_+$ be deterministic, and suppose that
    \( \eta_{m} = \cdots = 
    \eta_{m+r-1} = \gamma \), \( 0 < \gamma \leq 1/(2L) \). Then for every almost surely finite
    \( \mathcal{F}_{m} \)-measurable comparator \( y \) and every \( \delta \in (0, 1) \),
    \[
        \mathbb{P} \bigg(
            f(x_{m+r}) - f(y) \leq \frac{4D(y, x_{m})}{\gamma r} + 6\sigma^2\gamma \mathsf{H}_{r}
            + 9\sigma^2\gamma \log \frac{1}{\delta} \bigg| \mathcal{F}_{m}
        \bigg) \geq 1 - \delta \quad \text{almost surely.}
    \]
\end{theorem}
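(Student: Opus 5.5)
The plan is to reduce the restated theorem to Lemma~\ref{lem:terminal-inequality} by reindexing the block and conditioning on $\mathcal{F}_m$. Set $T:=r$, $X^{t}:=x_{m+t-1}$, $Z^{t}:=Z_{m+t-1}$ and stepsize $\gamma$ for $1\le t\le T$. Then $X^{1}=x_m$ and $X^{T+1}=x_{m+r}$, and the recursion $X^{t+1}=X^t-\gamma(\nabla f(X^t)+Z^t)$ is exactly SGD on the block. The shifted filtration $\mathcal{G}_t:=\mathcal{F}_{m+t}$ makes $Z^t$ $\mathcal{G}_t$-measurable, with $\mathbb{E}[Z^t\mid\mathcal{G}_{t-1}]=0$ and $\mathbb{E}[\exp(\|Z^t\|^2/\sigma^2)\mid\mathcal{G}_{t-1}]\le\mathrm{e}$. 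The comparator $y$ is $\mathcal{G}_0=\mathcal{F}_m$-measurable. Consequently the barycentric points $z^{t}$ and the directions $d_t=z^{t-1}-X^t$ are $\mathcal{G}_{t-1}$-measurable, because $z^{t-1}$ is built from $y,X^1,\dots,X^{t-1}$ with deterministic weights $v_t$. This predictability is exactly what the supermartingale constructions in Lemmas~\ref{lem:norm-martingale} and \ref{lem:inner-product-martingale} require.

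Next I would make the argument conditional. The deterministic master inequality, Lemma~\ref{lem:deterministic-master-inequality}, holds pathwise for every $y$, so it needs no change. For the two martingale lemmas, the processes $N_t$ and $M_t$ start at $N_0=M_0=1$ and are supermartingales with respect to $(\mathcal{G}_t)$. Ville's inequality therefore holds in conditional form:
\[
\mathbb{P}\bigl(\sup_t N_t>1/\delta\mid\mathcal{G}_0\bigr)\le\delta \quad \text{a.s.}
\]
The quick way to see this is to apply the optional stopping or maximal inequality argument to $N_t\mathbf{1}_A$ for each $A\in\mathcal{G}_0$. The same holds for $M_t$. The only place $y$ enters the bounds is through $D(y,X^1)=D(y,x_m)$, and that quantity is $\mathcal{G}_0$-measurable, so conditioning leaves it as a constant. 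The union bound with the split $p\delta$ and $(1-p)\delta$, $p=1/9$, then also holds conditionally. Together these give
\[
\mathbb{P}\Bigl(f(x_{m+r})-f(y)\le \tfrac{4D(y,x_m)}{\gamma r}+6\sigma^2\gamma\mathsf{H}_r+9\sigma^2\gamma\log\tfrac1\delta\Bigm|\mathcal{F}_m\Bigr)\ge1-\delta \quad\text{a.s.}
\]

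The main obstacle is measure-theoretic: making rigorous that a random $\mathcal{F}_m$-measurable $y$ can be treated as fixed. I would handle it in one of two ways. The first is to check directly that the sub-Gaussian mgf bound
\[
\mathbb{E}[\exp(\langle Z^t,u\rangle)\mid\mathcal{G}_{t-1}]\le\exp(2\sigma^2\|u\|^2)
\]
holds for $\mathcal{G}_{t-1}$-measurable random $u$. This follows from the norm-sub-Gaussian assumption by the standard symmetrization-free argument applied conditionally, and the proofs of the lemmas already use it in this form. The second is to disintegrate: because the seeds $\xi_m,\xi_{m+1},\dots$ are independent of $\mathcal{F}_m$, a regular conditional probability given $\mathcal{F}_m$ lets me fix $x_m$ and $y$ as deterministic and apply Lemma~\ref{lem:terminal-inequality} unconditionally. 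One caveat for this route is that the oracle noise law may depend on the past, so the future is not literally an unconditional instance. For that reason I would rely on the first route (conditional Ville) as the primary argument.

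Almost-sure finiteness of $y$ guarantees that $D(y,x_m)<\infty$, so the bound is meaningful. Finally, the assumption $0<\gamma\le1/(2L)$ is exactly the hypothesis of Lemma~\ref{lem:one-step-inequality}, which underlies the master inequality.
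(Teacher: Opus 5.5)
Your proposal is correct and matches the paper's proof: reindex the block via $X^t:=x_{m+t-1}$, $Z^t:=Z_{m+t-1}$, $T:=r$, use the shifted filtration $\mathcal{G}_t:=\mathcal{F}_{m+t}$, and apply Lemma~\ref{lem:terminal-inequality} conditionally on $\mathcal{G}_0=\mathcal{F}_m$. Your explicit checks simply spell out what the paper's one-line ``applies conditionally'' leaves implicit: the predictability of $d_t$, the directional mgf bound for $\mathcal{G}_{t-1}$-measurable directions, and conditional Ville via $N_t\mathbf{1}_A$ for $A\in\mathcal{G}_0$.
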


\begin{proof}
    Condition on \(\mathcal{F}_m\), set \(X^t:=x_{m+t-1}\), \(Z^t:=Z_{m+t-1}\), and \(T:=r\),
    and use the local filtration \(\mathcal{G}_t:=\mathcal{F}_{m+t}\) for \(0\leq t\leq r\). Then
    \(X^1\) and \(y\) are \(\mathcal{G}_0\)-measurable, while \(Z^t\) is
    \(\mathcal{G}_t\)-measurable and satisfies the conditional noise assumptions given
    \(\mathcal{G}_{t-1}\). Lemma~\ref{lem:terminal-inequality} therefore applies conditionally and
    gives the stated bound.
\end{proof}

%% file: sections/Appendix-B-2-global-radius.tex
\subsection{Global radius and calibration} \label{appendix:global-radius}

For a time-uniform analysis based on finite stepsize energy, we must first control the entire trajectory.
The next lemma obtains this control from two nonnegative supermartingales: one for the accumulated noise
energy and one for the stopped directional noise.

Define \( D_t:=\|x_t-x^*\|^2/2 \), \( Q_\infty:=\sum_{t=0}^\infty\eta_t^2 \), and
\( \eta_*:=\sup_{t\geq0}\eta_t \). We assume \(D_0\leq R^2/2\), \(Q_\infty<\infty\), and
\(0\leq\eta_t\leq1/(2L)\).

\begin{lemma} \label{lem:global-radius}
    Let \(\beta\in(0,1)\) and \(\ell_\beta:=\log(2/\beta)\). Define
    \[
        C_\beta:=\frac{R^2}{2}+\frac{\sigma^2Q_\infty}{2}
        +\frac{\sigma^2\eta_*^2\ell_\beta}{2}, \quad
        q_\beta:=\sigma^2Q_\infty\ell_\beta,
    \]
    and set
    \[
        B_\beta:=\left(\sqrt{C_\beta+4q_\beta}+2\sqrt{q_\beta}\right)^2.
    \]
    Then
    \[
        \mathbb{P}\left(D_t\leq B_\beta,\ \forall t\geq0\right)\geq1-\beta.
    \]
\end{lemma}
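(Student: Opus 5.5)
We will prove Lemma~\ref{lem:global-radius} from the one-step distance recursion, taking the minimizer $x^*$ as comparator. Write $g_t:=\nabla f(x_t)$. Expanding $x_{t+1}=x_t-\eta_t(g_t+Z_t)$ gives
\[
D_{t+1}=D_t-\eta_t\langle g_t+Z_t,x_t-x^*\rangle+\tfrac{\eta_t^2}{2}\|g_t+Z_t\|^2 .
\]
Co-coercivity gives $\langle g_t,x_t-x^*\rangle\ge \|g_t\|^2/L$, and $\eta_t\le 1/(2L)$. Together with $\|g+Z\|^2\le 2\|g\|^2+2\|Z\|^2$, the gradient terms are nonpositive and can be dropped. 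This leaves
\[
D_{t+1}\le D_t-\eta_t\langle Z_t,x_t-x^*\rangle+\eta_t^2\|Z_t\|^2,
\]
where the constant in front of $\|Z_t\|^2$ may need tuning to $1/2$ through the cross term. To obtain the paper's $\sigma^2Q_\infty/2$, we will instead expand exactly and bound $-\eta_t\langle g_t,x_t-x^*\rangle+\frac{\eta_t^2}{2}\|g_t\|^2+\eta_t^2\langle g_t,Z_t\rangle$. The mixed term $\eta_t^2\langle g_t,Z_t\rangle$ has zero conditional mean and can be folded into the directional martingale, since $g_t$ is bounded by $L\|x_t-x^*\|$. Telescoping then yields
\[
D_t\le \tfrac{R^2}{2}+\tfrac12\sum_{s<t}\eta_s^2\|Z_s\|^2+M_t,
\]
where $M_t$ is a martingale whose increments are $\langle Z_s,u_s\rangle$ with $\|u_s\|\le c\,\eta_s\sqrt{D_s}$.

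\textbf{Energy term.} As in Lemma~\ref{lem:norm-martingale}, we take weights $a_s=\eta_s^2/\eta_*^2\le 1$. Jensen's inequality makes $\exp\big(\sum_s a_s(\|Z_s\|^2/\sigma^2-1)\big)$ a nonnegative supermartingale. Ville's inequality at level $\beta/2$ then gives, for all $t$ simultaneously,
\[
\sum_{s<t}\eta_s^2\|Z_s\|^2\le \sigma^2 Q_\infty+\sigma^2\eta_*^2\ell_\beta .
\]
Halving this matches the last two terms of $C_\beta$.

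\textbf{Directional term.} Fix a threshold $B$ and stop at $\tau:=\inf\{t:D_t>B\}$. For $s<\tau$ we have $\|u_s\|^2\le 2\eta_s^2 B$ up to constants. The sub-Gaussian MGF bound used in Lemma~\ref{lem:inner-product-martingale} gives a supermartingale $\exp(\lambda M_{t\wedge\tau}-2\sigma^2\lambda^2\sum_{s<t\wedge\tau}\|u_s\|^2)$. By Ville at level $\beta/2$, with $\lambda$ optimized, the bound $M_{t\wedge\tau}\le c\sqrt{B\,\sigma^2Q_\infty\ell_\beta}=c\sqrt{Bq_\beta}$ holds for all $t$. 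Since $\lambda$ must be chosen deterministically, we will take $\lambda$ depending on the deterministic $B=B_\beta$ itself. This is legitimate because $B_\beta$ is a fixed number.

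\textbf{Closing the argument.} On the intersection of both events (probability $\ge1-\beta$), suppose $\tau<\infty$. Then $D_\tau\le C_\beta+c\sqrt{B_\beta q_\beta}$. The choice $B_\beta=(\sqrt{C_\beta+4q_\beta}+2\sqrt{q_\beta})^2$ is exactly the larger root of $x=C_\beta+4\sqrt{x q_\beta}$ (indeed $\sqrt x=2\sqrt q+\sqrt{4q+C}$). So if $c\le4$, the right side is at most $B_\beta$, contradicting $D_\tau>B_\beta$. Hence $\tau=\infty$. A subtlety is that $D_\tau$ involves a step from $D_{\tau-1}\le B$, so the stopped martingale must include step $\tau-1$, whose coefficient is still controlled by $D_{\tau-1}\le B$. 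The increment at step $s$ uses $u_s$, which depends on $x_s$, so stopping at $s<\tau$ suffices.

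The main obstacle is the constant bookkeeping in the first step: absorbing the gradient and mixed terms so that the noise energy enters with coefficient $1/2$ and the directional constant is at most $4$. If the exact constants fail, we will fold $\eta_t^2\langle g_t,Z_t\rangle$ into $u_t$ via $\|g_t\|\le L\sqrt{2D_t}$ and $L\eta_t\le 1/2$, which inflates $\|u_t\|$ by a factor $3/2$ that still fits under $c=4$.
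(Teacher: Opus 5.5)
Your overall structure matches the paper: an energy supermartingale with weights $\eta_t^2/\eta_*^2$, a directional supermartingale stopped at $\tau_b$ with $\lambda$ tuned to the deterministic $b=B_\beta$, Ville at level $\beta/2$ for each, and a contradiction via the root of $b=C_\beta+4\sqrt{b\,q_\beta}$. The gap is at the step you flag as the main obstacle, and the fallback you propose there does not work.

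Use the MGF bound you invoke, $\exp(2\sigma^2\lambda^2\|u_s\|^2)$, and suppose $\|u_s\|^2\le\kappa\,\eta_s^2 b$ on $\{s<\tau_b\}$. The optimized Ville bound is then $M\le 2\sqrt{2\kappa}\,\sqrt{b\,q_\beta}$. The constant $4$ therefore needs $\kappa=2$, i.e.\ $\|u_s\|\le\eta_s\|x_s-x^*\|$ with no slack. Your triangle-inequality estimate $\|u_s\|\le\eta_s(1+L\eta_s)\|x_s-x^*\|\le\frac32\eta_s\|x_s-x^*\|$ gives $\kappa=9/2$, hence constant $6$, not at most $4$. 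Since $B_\beta=C_\beta+4\sqrt{B_\beta q_\beta}$ by construction, you would need $C_\beta+6\sqrt{B_\beta q_\beta}\le B_\beta$, which is false whenever $q_\beta>0$. So the stopping-time contradiction fails. As written, you would only prove the lemma with the larger radius $(\sqrt{C_\beta+9q_\beta}+3\sqrt{q_\beta})^2$.

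The missing idea is to bound the combined vector rather than its two pieces separately. Your directional term and mixed term together equal $-\eta_t\langle Z_t,\bar{x}_{t+1}-x^*\rangle$ with $\bar{x}_{t+1}:=x_t-\eta_t g_t$. By cocoercivity and $\eta_t\le 2/L$,
\[
\|\bar{x}_{t+1}-x^*\|^2=\|x_t-x^*\|^2-2\eta_t\langle g_t,x_t-x^*\rangle+\eta_t^2\|g_t\|^2\le\|x_t-x^*\|^2-\eta_t\Bigl(\frac{2}{L}-\eta_t\Bigr)\|g_t\|^2\le\|x_t-x^*\|^2 .
\]
This is the same inequality you already use to discard the gradient terms, since those terms equal $\frac12(\|\bar{x}_{t+1}-x^*\|^2-\|x_t-x^*\|^2)$. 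It costs nothing extra and gives $\|u_t\|\le\eta_t\sqrt{2D_t}$, hence $\kappa=2$ and constant exactly $4$.

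This is how the paper proceeds. It writes $x_{t+1}=\bar{x}_{t+1}-\eta_tZ_t$ and proves nonexpansiveness of the noiseless step first. It then expands once to get $D_{t+1}\le D_t+\frac{\eta_t^2}{2}\|Z_t\|^2-\eta_t\langle Z_t,\bar{x}_{t+1}-x^*\rangle$. The direction $\bar{x}_{t+1}-x^*$ is $\mathcal{F}_t$-measurable and has norm at most $\sqrt{2b}$ on $\{t<\tau_b\}$.

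With that substitution the rest of your argument goes through as written. Treat $\eta_*=0$ separately, since your choice of $\lambda$ requires $Q_\infty>0$.
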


\begin{proof}
    We divide the proof into four steps.

    \textit{Step 1: A pathwise distance recursion.}
    Set \(\bar{x}_{t+1}:=x_t-\eta_t\nabla f(x_t)\). Since \(f\) is convex and \(L\)-smooth and
    \(\nabla f(x^*)=0\), cocoercivity gives
    \[
        \langle\nabla f(x_t),x_t-x^*\rangle\geq\frac{1}{L}\|\nabla f(x_t)\|^2.
    \]
    Therefore, for every \(0\leq\eta_t\leq2/L\),
    \begin{align*}
        \|\bar{x}_{t+1}-x^*\|^2
        &=\|x_t-x^*\|^2-2\eta_t\langle\nabla f(x_t),x_t-x^*\rangle
        +\eta_t^2\|\nabla f(x_t)\|^2 \\
        &\leq\|x_t-x^*\|^2-\eta_t\left(\frac{2}{L}-\eta_t\right)
        \|\nabla f(x_t)\|^2 \\
        &\leq\|x_t-x^*\|^2.
    \end{align*}
    Thus the deterministic gradient step is nonexpansive relative to \(x^*\). Since
    \(x_{t+1}=\bar{x}_{t+1}-\eta_tZ_t\), we obtain the pathwise inequality
    \begin{equation}
        D_{t+1}\leq D_t+\frac{\eta_t^2}{2}\|Z_t\|^2
        -\eta_t\langle Z_t,\bar{x}_{t+1}-x^*\rangle.
        \label{eq:radius-one-step}
    \end{equation}
        
    \textit{Step 2: Time-uniform control of the accumulated noise energy.}
    If \(\eta_*=0\), then every step is zero and the result is immediate. Hence, assume \(\eta_*>0\)
    and put \(c_t:=\eta_t^2/\eta_*^2\in[0,1]\). The conditional noise assumption and conditional
    Jensen's inequality give
    \[
        \mathbb{E}\left[\left.
            \exp\left(c_t\left(\frac{\|Z_t\|^2}{\sigma^2}-1\right)\right)
        \right|\mathcal{F}_t\right]
        \leq\left\{\mathbb{E}\left[\left.
            \exp\left(\frac{\|Z_t\|^2}{\sigma^2}-1\right)
        \right|\mathcal{F}_t\right]\right\}^{c_t}
        \leq1.
    \]
    Consequently,
    \[
        \mathcal{S}_n:=\exp\left(
            \sum_{t=0}^{n-1}c_t\left(\frac{\|Z_t\|^2}{\sigma^2}-1\right)
        \right), \quad n\geq0,
    \]
    is a nonnegative supermartingale with \(\mathcal{S}_0=1\). Ville's inequality shows that, with
    probability at least \(1-\beta/2\), simultaneously for every \(n\geq0\),
    \begin{equation} \label{eq:radius-noise-energy}
        \sum_{t=0}^{n-1}\eta_t^2\|Z_t\|^2
        \leq\sigma^2\sum_{t=0}^{n-1}\eta_t^2+\sigma^2\eta_*^2\ell_\beta
        \leq\sigma^2Q_\infty+\sigma^2\eta_*^2\ell_\beta.
    \end{equation}

    \textit{Step 3: Time-uniform control of the stopped directional noise.}
    We first record the directional consequence of the conditional noise assumption. For every
    \(a,y\in\mathbb{R}\),
    \begin{align*}
        \mathrm{e}^{ay}-1-ay
        &\leq\frac{a^2y^2}{2}\mathrm{e}^{|ay|} \\
        &\leq\frac{a^2}{2}\mathrm{e}^{a^2/2}
        y^2\mathrm{e}^{y^2/2} \\
        &\leq\frac{a^2}{2}\mathrm{e}^{a^2/2}
        \left(\mathrm{e}^{y^2}-1\right).
    \end{align*}
    If \(u_t\neq0\), set \(Y_t:=\langle Z_t,u_t\rangle/(\sigma\|u_t\|)\). Conditional centering
    gives \(\mathbb{E}[Y_t\mid\mathcal{F}_t]=0\), while the conditional norm-square MGF gives
    \(\mathbb{E}[\mathrm{e}^{Y_t^2}\mid\mathcal{F}_t]\leq\mathrm{e}\). Therefore,
    \[
        \mathbb{E}\left[\left.\mathrm{e}^{aY_t}\right|\mathcal{F}_t\right]
        \leq1+\frac{\mathrm{e}-1}{2}a^2\mathrm{e}^{a^2/2}
        \leq\mathrm{e}^{2a^2}.
    \]
    Here the last inequality follows from \((\mathrm{e}-1)/2<1\) and
    \(s\mathrm{e}^{s/2}\leq\mathrm{e}^{2s}-1\) for \(s\geq0\). The case \(u_t=0\) is immediate.
    Taking \(a=\lambda\sigma\|u_t\|\) yields
    \begin{equation} \label{eq:conditional-directional-mgf}
        \mathbb{E}\left[\left.\exp\left(\lambda\langle Z_t,u_t\rangle\right)
        \right|\mathcal{F}_t\right]
        \leq\exp\left(2\sigma^2\lambda^2\|u_t\|^2\right)
    \end{equation}
    for every \(\mathcal{F}_t\)-measurable \(u_t\in\mathcal{H}\) and every \(\lambda\in\mathbb{R}\).
    Fix \(b>0\), and define the stopping time \(\tau_b:=\inf\{t\geq0:D_t>b\}\). On
    \(\{t<\tau_b\}\), Step 1 gives
    \[
        \|\bar{x}_{t+1}-x^*\|^2\leq\|x_t-x^*\|^2=2D_t\leq2b.
    \]
    Define the stopped sum
    \[
        M_n^{(b)}:=-\sum_{t=0}^{n-1}\eta_t
        \langle Z_t,\bar{x}_{t+1}-x^*\rangle\mathbf{1}_{\{t<\tau_b\}}.
    \]
    For any fixed \(\lambda>0\), define
    \[
        \mathcal{E}_n^{(b)}(\lambda):=\exp\left(
            \lambda M_n^{(b)}-4\sigma^2b\lambda^2\sum_{t=0}^{n-1}\eta_t^2
        \right).
    \]
    The indicator \(\mathbf{1}_{\{t<\tau_b\}}\) and the vector \(\bar{x}_{t+1}-x^*\) are
    \(\mathcal{F}_t\)-measurable. Hence, by \eqref{eq:conditional-directional-mgf},
    \begin{align*}
        \mathbb{E}\left[\left.\mathcal{E}_{n+1}^{(b)}(\lambda)\right|\mathcal{F}_n\right]
        &\leq\mathcal{E}_n^{(b)}(\lambda)
        \exp\left(2\sigma^2\lambda^2\eta_n^2
        \|\bar{x}_{n+1}-x^*\|^2\mathbf{1}_{\{n<\tau_b\}}
        -4\sigma^2b\lambda^2\eta_n^2\right) \\
        &\leq\mathcal{E}_n^{(b)}(\lambda).
    \end{align*}
    Thus \(\{\mathcal{E}_n^{(b)}(\lambda)\}_{n\geq0}\) is a nonnegative supermartingale starting
    from one. Ville's inequality gives, with probability at least \(1-\beta/2\), simultaneously for all
    \(n\geq0\),
    \[
        M_n^{(b)}\leq4\sigma^2b\lambda Q_\infty+\frac{\ell_\beta}{\lambda}.
    \]
    Taking \(\lambda=\sqrt{\ell_\beta/(4\sigma^2bQ_\infty)}\) yields
    \begin{equation}
        M_n^{(b)}\leq4\sigma\sqrt{bQ_\infty\ell_\beta}, \quad \forall n\geq0.
        \label{eq:radius-directional-noise}
    \end{equation}

    \textit{Step 4: Stopping-time contradiction and calibration.}
    Intersect the events in \eqref{eq:radius-noise-energy} and
    \eqref{eq:radius-directional-noise}; this intersection has probability at least \(1-\beta\).
    Set \(b=B_\beta\), and suppose on this event that \(\tau_b<\infty\). Summing
    \eqref{eq:radius-one-step} up to \(\tau_b\) gives
    \begin{align*}
        D_{\tau_b}
        &\leq D_0+\frac{1}{2}\sum_{t=0}^{\tau_b-1}\eta_t^2\|Z_t\|^2+M_{\tau_b}^{(b)} \\
        &\leq C_\beta+4\sqrt{bq_\beta}.
    \end{align*}
    The definition of \(B_\beta\) is exactly the positive solution of
    \(b=C_\beta+4\sqrt{bq_\beta}\). Therefore, \(D_{\tau_b}\leq b\), contradicting the definition
    of \(\tau_b\). Hence \(\tau_b=\infty\), and \(D_t\leq B_\beta\) for every \(t\geq0\) on the
    same event.
\end{proof}

\begin{remark}
    For the dyadic schedule detailed in Algorithm~\ref{alg:schedule}, we have
\[
    Q_{\infty} = \sum_{j=j_0}^{\infty} N_{j} \gamma_{j}^{2} = \sum_{j=j_0}^{\infty} N_{j} 
    \frac{R^2}{\sigma^2 c_h^2 N_j h_j^2} = \frac{R^2}{\sigma^2} c_h^{-2} \sum_{j=j_0}^{\infty} 
    \frac{1}{h_j^2} \leq \frac{R^2}{\sigma^2}.
\]
Then Lemma~\ref{lem:global-radius} holds. Suppose that we assign probability \( \alpha/2 \) to this
event. Then we have
\begin{align*}
    C_{\alpha/2} &= \frac{R^2}{2} + \frac{\sigma^2 Q_{\infty}}{2} 
    + \frac{\sigma^2 \eta_*^2 \ell_{\alpha/2}}{2} \\
    &\leq \frac{R^2}{2} + \frac{R^2}{2} + \frac{\sigma^2 \ell_{\alpha/2}}{2} \cdot \frac{R^2}{\sigma^2
    c_h^2 N_{j_0} h_{j_0}^2} \leq R^2 + \frac{R^2 \ell_{\alpha/2}}{2}.
\end{align*}
The last inequality follows from the fact that \( N_{j_0}, h_{j_0}, c_h \) are all at least \(1\).
Together with the fact that \( q_{\alpha/2} = \sigma^2 Q_{\infty} \ell_{\alpha/2} \leq R^2 \ell_{\alpha/2} \),
we have
\[
    B_{\alpha/2} = (\sqrt{C_{\alpha/2} + 4 q_{\alpha/2}} + 2 \sqrt{q_{\alpha/2}})^2
    \leq R^2 \left( \sqrt{ 1 + \frac{9}{2} \ell_{\alpha/2} } + 2 \sqrt{\ell_{\alpha/2}} \right)^2.
\]
Since \( \ell_{\alpha/2} = \log (4/\alpha) \geq 2\log 2 \) and
\[
    \sqrt{1 + \frac{9}{2} y} \leq \frac{5}{2} \sqrt{y}, \quad \forall y \geq 2\log 2,
\]
we have
\[
    B_{\alpha/2} \leq \frac{81}{4} R^2 \log \frac{4}{\alpha}.
\]
\end{remark}

%% file: sections/Appendix-B-3-sufficiency-proof.tex
\subsection{Proof of the sufficiency theorem} \label{appendix:sufficiency-proof}

Throughout this subsection, \( h \) is eventually nondecreasing with
\( \sum_{j \geq 1} h(2^j)^{-2} < \infty \), and \( \eta = \eta(h, L, R, \sigma) \) is the schedule
produced by Algorithm~\ref{alg:schedule}, with the associated quantities \( J_h \), \( c_h \),
\( j_0 \), \( N_j = 2^j \), \( h_j = h(N_j) \), and \( \gamma_j = R / (\sigma c_h \sqrt{N_j} h_j) \).
Recall \( \Delta_t = f(x_t) - f^* \) and \( D(y, x) = \| y - x \|^2 / 2 \).

We first record the two calibration properties of the schedule that make
Theorem~\ref{thm:additive-conditional-restart} and Lemma~\ref{lem:global-radius} applicable.

\begin{lemma}[Schedule calibration and finite energy] \label{lem:schedule-calibration}
    The schedule of Algorithm~\ref{alg:schedule} satisfies:
    \begin{enumerate}[label=(\roman*)]
        \item \( \{ \gamma_j \}_{j \geq j_0} \) is nonincreasing, and
        \( 0 < \gamma_j \leq 1 / (2L) \) for every \( j \geq j_0 \);
        \item the total squared-step energy is finite, with
        \[
            Q_{\infty} := \sum_{t \geq 0} \eta_t^2
            = \sum_{j = j_0}^{\infty} N_j \gamma_j^2
            \leq \frac{R^2}{\sigma^2}.
        \]
    \end{enumerate}
\end{lemma}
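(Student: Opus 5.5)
The plan is to verify both items directly from the definitions in Algorithm~\ref{alg:schedule}, using only the eventual monotonicity of \(h\) and the choice of \(c_h\) and \(j_0\).

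For (i), positivity of \(\gamma_j\) is immediate, since \(R,\sigma,c_h>0\) and \(h_j>0\). For monotonicity, note that \(j_0\ge J_h\). By definition of \(J_h\), \(h(n+1)\ge h(n)\) for all \(n\ge N_{J_h}\), so \(h_{j+1}=h(2^{j+1})\ge h(2^j)=h_j\) for every \(j\ge j_0\). Since also \(\sqrt{N_{j+1}}>\sqrt{N_j}\), the denominator \(\sigma c_h\sqrt{N_j}h_j\) is nondecreasing in \(j\). Hence \(\gamma_{j+1}\le\gamma_j\). The defining condition of \(j_0\) gives \(\gamma_{j_0}=R/(\sigma c_h\sqrt{N_{j_0}}h_{j_0})\le 1/(2L)\), and monotonicity propagates this to all \(j\ge j_0\). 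Well-definedness of \(j_0\) was already noted after the algorithm, since \(h_j\to\infty\).

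For (ii), first observe that \(\eta_t=0\) for \(t<N_{j_0}\). For \(j\ge j_0\), epoch \(j\) consists of exactly \(N_{j+1}-N_j=N_j\) indices, each with \(\eta_t=\gamma_j\). Summing gives
\[
Q_\infty=\sum_{j\ge j_0}N_j\gamma_j^2=\frac{R^2}{\sigma^2c_h^2}\sum_{j\ge j_0}\frac{1}{h_j^2}\le\frac{R^2}{\sigma^2c_h^2}\sum_{j\ge J_h}\frac{1}{h_j^2}.
\]
The inequality holds because \(j_0\ge J_h\) and the terms are nonnegative. By definition \(c_h^2\ge\sum_{j\ge J_h}h_j^{-2}\), so the right-hand side is at most \(R^2/\sigma^2\).

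There is no real obstacle; the only point needing care is the index bookkeeping. The epoch \([N_j,N_{j+1})\) has length \(N_j\), and monotonicity of \(h\) is only available from \(N_{J_h}\) onward. The latter is exactly why \(j_0\ge J_h\) is built into the algorithm.
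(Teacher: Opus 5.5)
Your proposal is correct and follows essentially the same route as the paper's proof. For (i) you use $j_0 \ge J_h$ to get that $\sqrt{N_j}\,h_j$ is nondecreasing and then propagate $\gamma_{j_0} \le 1/(2L)$. For (ii) you sum $N_j\gamma_j^2$ over the active epochs and bound the result using $c_h^2 \ge \sum_{j \ge J_h} h_j^{-2}$.
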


\begin{proof}
    Since \( j_0 \geq J_h \), the sequence \( \{ h_j \}_{j \geq j_0} \) is nondecreasing, and
    \( N_j \) is increasing. Hence \( \sqrt{N_j} h_j \) is increasing and
    \( \gamma_j = R / (\sigma c_h \sqrt{N_j} h_j) \) is nonincreasing. The definition of \( j_0 \)
    gives \( \gamma_{j_0} = R / (\sigma c_h \sqrt{N_{j_0}} h_{j_0}) \leq 1 / (2L) \), so
    \( \gamma_j \leq \gamma_{j_0} \leq 1/(2L) \) for every \( j \geq j_0 \). This proves (i).

    For (ii), the steps vanish before epoch \( j_0 \) and are constant on each later epoch, so
    \[
        Q_{\infty}
        = \sum_{j = j_0}^{\infty} N_j \gamma_j^2
        = \sum_{j = j_0}^{\infty} N_j \frac{R^2}{\sigma^2 c_h^2 N_j h_j^2}
        = \frac{R^2}{\sigma^2 c_h^2} \sum_{j = j_0}^{\infty} \frac{1}{h_j^2}
        \leq \frac{R^2}{\sigma^2},
    \]
    because \( j_0 \geq J_h \) and \( c_h^2 \geq \sum_{j \geq J_h} h_j^{-2} \) by construction.
\end{proof}

The next lemma is where reciprocal-square summability enters the constructive direction: it converts
Lemma~\ref{lem:summability-epoch} into two constants that do not depend on the epoch index.

\begin{lemma}[Epoch constants] \label{lem:epoch-constants}
    For every \( \alpha \in (0, 1) \), the quantities
    \[
        K_{h_1} := \sup_{j \geq j_0} \frac{(24 \log 2) j + 18 \log 2 + 6}{h_j^2},
        \qquad
        K_{h_2} := \sup_{j \geq j_0} \frac{9 \log (1 / \alpha)}{h_j^2}
    \]
    are finite. Moreover \( K_{h_2} = 9 \log(1/\alpha) / h_{j_0}^2 \leq 9 \log (1/\alpha) \).
\end{lemma}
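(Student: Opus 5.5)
The plan is to treat the two suprema separately, relying only on the construction in Algorithm~\ref{alg:schedule} and on Lemma~\ref{lem:summability-epoch}.

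For \(K_{h_1}\), I first observe that the summand satisfies
\[
\frac{(24\log 2)j+18\log 2+6}{h_j^2}\le \frac{c\,j}{h_j^2}
\]
for every \(j\ge j_0\ge 1\), where \(c:=24\log 2+18\log 2+6\) is an absolute constant. The hypotheses of Lemma~\ref{lem:summability-epoch} hold for \(\{h_j\}\): the sequence is positive, it is eventually nondecreasing because \(h\) is and \(N_j\) is increasing, and \(\sum_j h_j^{-2}<\infty\) by assumption. The lemma therefore gives \(j/h_j^2\to0\). Hence there is an index \(J'\) such that \(c\,j/h_j^2\le1\) for all \(j\ge J'\). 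On the finite range \(j_0\le j<J'\), each term is a finite real number because \(h_j>0\). The supremum is then bounded by the maximum of finitely many finite numbers and \(1\), so it is finite.

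For \(K_{h_2}\), the key fact is monotonicity. Since \(j_0\ge J_h\), we have \(N_{j_0}\ge N_{J_h}\), and \(h\) is nondecreasing on \([N_{J_h},\infty)\cap\mathbb{N}_+\). Consequently \(h_j=h(N_j)\) is nondecreasing for \(j\ge j_0\). This is the same observation already used in Lemma~\ref{lem:schedule-calibration}. It follows that \(h_j^{-2}\) is nonincreasing on \(j\ge j_0\). Because \(\log(1/\alpha)>0\) for \(\alpha\in(0,1)\), the supremum of \(9\log(1/\alpha)/h_j^2\) is attained at \(j=j_0\), which gives the stated equality. Finally, the definition of \(j_0\) in Algorithm~\ref{alg:schedule} enforces \(h_{j_0}\ge1\), so \(K_{h_2}\le 9\log(1/\alpha)\).

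I do not expect a genuine obstacle here. The only point requiring care is that monotonicity of \(h_j\) must hold from \(j_0\) onward, not merely eventually. This is guaranteed by the choice \(j_0\ge J_h\) in Algorithm~\ref{alg:schedule}, and by \(J_h\) being defined through monotonicity of \(h\) on all integers \(n\ge N_{J_h}\).
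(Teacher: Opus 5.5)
Your proposal is correct and follows essentially the same route as the paper. For \(K_{h_1}\) both you and the paper use Lemma~\ref{lem:summability-epoch} to get \(j/h_j^2\to 0\) and conclude that the sequence is bounded. For \(K_{h_2}\) both use monotonicity of \(h_j\) from \(j_0\ge J_h\) onward together with \(h_{j_0}\ge 1\) from the definition of \(j_0\).
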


\begin{proof}
    By the definition of \( j_0 \) we have \( h_{j_0} \geq 1 \), and \( \{ h_j \}_{j \geq j_0} \) is
    nondecreasing, so \( h_j \geq 1 \) for every \( j \geq j_0 \). This gives the stated evaluation of
    \( K_{h_2} \). For \( K_{h_1} \), Lemma~\ref{lem:summability-epoch} gives \( j / h_j^2 \to 0 \),
    while \( (18 \log 2 + 6) / h_j^2 \to 0 \). Hence the sequence whose supremum defines
    \( K_{h_1} \) converges to zero, and is therefore bounded.
\end{proof}

We now fix the confidence allocation. For \( j \geq j_0 \) and \( 1 \leq r \leq N_j \), set
\begin{equation} \label{eq:confidence-allocation}
    \delta_{j, r} := \frac{\alpha}{2^{2(j+1)}},
    \qquad
    y_{j, r} :=
    \begin{cases}
        x_{N_j}, & 1 \leq r < N_j / 2, \\
        x^*, & N_j / 2 \leq r \leq N_j,
    \end{cases}
\end{equation}
and let \( \mathcal{A}_{j, r} \) denote the event
\[
    \mathcal{A}_{j,r} :=
    \left\{
        f(x_{N_j + r}) - f(y_{j,r})
        \leq \frac{4 D(y_{j,r}, x_{N_j})}{\gamma_j r}
        + 6 \sigma^2 \gamma_j \mathsf{H}_r
        + 9 \sigma^2 \gamma_j \log \frac{1}{\delta_{j,r}}
    \right\}.
\]
Both choices of \( y_{j,r} \) are almost surely finite and \( \mathcal{F}_{N_j} \)-measurable, and
\( \gamma_j \leq 1/(2L) \) by Lemma~\ref{lem:schedule-calibration}. Theorem~\ref{thm:additive-conditional-restart}
therefore applies with \( m = N_j \) and this \( r \), giving
\( \mathbb{P}(\mathcal{A}_{j,r} \mid \mathcal{F}_{N_j}) \geq 1 - \delta_{j,r} \) and hence
\( \mathbb{P}(\mathcal{A}_{j,r}) \geq 1 - \delta_{j,r} \). Let
\begin{equation} \label{eq:global-radius-event}
    \mathcal{G} := \left\{ D(x^*, x_t) \leq B_{\alpha/2}, \; \forall t \geq 0 \right\},
    \qquad
    B_{\alpha/2} \leq \frac{81}{4} R^2 \log \frac{4}{\alpha},
\end{equation}
which by Lemma~\ref{lem:schedule-calibration} and Lemma~\ref{lem:global-radius} satisfies
\( \mathbb{P}(\mathcal{G}) \geq 1 - \alpha / 2 \).

\begin{lemma}[Within-epoch estimate] \label{lem:within-epoch}
    Let \( j \geq j_0 \) and \( 1 \leq r \leq N_j \). On the event \( \mathcal{G} \cap \mathcal{A}_{j,r} \):
    \begin{enumerate}[label=(\roman*)]
        \item if \( 1 \leq r < N_j / 2 \), then
        \[
            f(x_{N_j + r}) - f(x_{N_j})
            \leq \frac{K_{h_1} + K_{h_2}}{c_h} \cdot R \sigma \frac{h_j}{\sqrt{N_j}};
        \]
        \item if \( N_j / 2 \leq r \leq N_j \), then
        \[
            \Delta_{N_j + r}
            \leq \left( 162 c_h \log \frac{4}{\alpha} + \frac{K_{h_1} + K_{h_2}}{c_h} \right)
            R \sigma \frac{h_j}{\sqrt{N_j}}.
        \]
    \end{enumerate}
\end{lemma}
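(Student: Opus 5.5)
The plan is to unpack the event \(\mathcal{A}_{j,r}\) for the two comparator choices in \eqref{eq:confidence-allocation}. First I bound the stochastic terms, which are common to both cases, and then I treat the penalty term separately in each case.

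\emph{Stochastic terms.} For \(1\le r\le N_j\) we have \(\mathsf{H}_r \le 1+\log r \le 1+ j\log 2\). We also have
\[
\log(1/\delta_{j,r}) = \log(1/\alpha) + 2(j+1)\log 2 .
\]
Hence
\[
6\sigma^2\gamma_j \mathsf{H}_r + 9\sigma^2\gamma_j\log(1/\delta_{j,r}) \le \sigma^2\gamma_j\bigl[(24\log 2) j + 18\log 2 + 6 + 9\log(1/\alpha)\bigr].
\]
This is exactly the numerator appearing in \(K_{h_1}\) and \(K_{h_2}\) of Lemma~\ref{lem:epoch-constants}. Inserting \(\gamma_j = R/(\sigma c_h\sqrt{N_j}h_j)\) and writing the bracket as \(h_j^2\cdot(\text{bracket}/h_j^2)\le h_j^2(K_{h_1}+K_{h_2})\), the stochastic part is at most
\[
\frac{K_{h_1}+K_{h_2}}{c_h}\, R\sigma\frac{h_j}{\sqrt{N_j}}.
\]

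\emph{Case (i).} Here \(y_{j,r}=x_{N_j}\), so \(D(y_{j,r},x_{N_j})=0\) and the penalty term vanishes. The bound from \(\mathcal{A}_{j,r}\) is therefore exactly the stochastic bound above, which gives (i). The event \(\mathcal{G}\) is not needed in this case.

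\emph{Case (ii).} Here \(y_{j,r}=x^*\), so \(f(y_{j,r})=f^*\) and the left-hand side of \(\mathcal{A}_{j,r}\) is \(\Delta_{N_j+r}\). On \(\mathcal{G}\), \eqref{eq:global-radius-event} gives \(D(x^*,x_{N_j})\le \frac{81}{4}R^2\log(4/\alpha)\). Using \(r\ge N_j/2\),
\[
\frac{4D}{\gamma_j r}\le \frac{8D}{\gamma_j N_j}\le 162 R^2\log(4/\alpha)\cdot \frac{\sigma c_h\sqrt{N_j}h_j}{R N_j} = 162 c_h\log(4/\alpha)\, R\sigma\frac{h_j}{\sqrt{N_j}}.
\]
Adding the stochastic part gives (ii).

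The main obstacle is only bookkeeping. One must check that \(\mathsf{H}_r\le 1+j\log 2\) and the logarithm of \(\delta_{j,r}\) produce exactly the constants \(24\log 2\) and \(18\log 2+6\) with coefficients \(6\) and \(9\): indeed \(6j\log 2+18(j+1)\log 2=24j\log 2+18\log 2\), and the constant \(6\) comes from \(6\cdot 1\). Beyond that, the argument only needs \(\gamma_j>0\) and \(r\ge N_j/2\), both of which hold.
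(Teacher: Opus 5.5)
Your proposal is correct and follows the paper's proof essentially line by line. You bound the common stochastic terms via $\mathsf{H}_r \le 1 + j\log 2$ and $\log(1/\delta_{j,r}) = 2(j+1)\log 2 + \log(1/\alpha)$ to recover exactly the numerators of $K_{h_1}$ and $K_{h_2}$, use the vanishing penalty for the comparator $x_{N_j}$ in case (i), and use the radius bound on $\mathcal{G}$ with $r \ge N_j/2$ for the comparator $x^*$ in case (ii), giving the same constants $24\log 2$, $18\log 2 + 6$, and $162$.
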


\begin{proof}
    We first bound the stochastic terms, which are common to both cases. Since \( r \leq N_j \), we have
    \( \mathsf{H}_r \leq \log r + 1 \leq j \log 2 + 1 \), and
    \( \log (1 / \delta_{j,r}) = 2(j+1) \log 2 + \log (1/\alpha) \) by
    \eqref{eq:confidence-allocation}. Using
    \( \sigma^2 \gamma_j = R \sigma / (c_h \sqrt{N_j} h_j) \), we obtain
    \begin{align*}
        6 \sigma^2 \gamma_j \mathsf{H}_r + 9 \sigma^2 \gamma_j \log \frac{1}{\delta_{j,r}}
        &\leq \frac{R \sigma}{c_h \sqrt{N_j} h_j}
        \left( 6 (j \log 2 + 1) + 9 \left( 2(j+1) \log 2 + \log \frac{1}{\alpha} \right) \right) \\
        &= \frac{1}{c_h} \cdot R \sigma \frac{h_j}{\sqrt{N_j}}
        \left( \frac{(24 \log 2) j + 18 \log 2 + 6}{h_j^2}
        + \frac{9 \log (1/\alpha)}{h_j^2} \right) \\
        &\leq \frac{K_{h_1} + K_{h_2}}{c_h} \cdot R \sigma \frac{h_j}{\sqrt{N_j}},
    \end{align*}
    the last step by Lemma~\ref{lem:epoch-constants}.

    For (i), the comparator is \( y_{j,r} = x_{N_j} \), so the initial distance term vanishes:
    \( D(x_{N_j}, x_{N_j}) = 0 \). On \( \mathcal{A}_{j,r} \) the displayed bound is exactly the
    stochastic estimate above.

    For (ii), the comparator is \( y_{j,r} = x^* \), so
    \( f(x_{N_j + r}) - f(y_{j,r}) = \Delta_{N_j + r} \). On \( \mathcal{G} \) we have
    \( D(x^*, x_{N_j}) \leq B_{\alpha/2} \), and \( r \geq N_j / 2 \), so by
    \eqref{eq:global-radius-event},
    \[
        \frac{4 D(x^*, x_{N_j})}{\gamma_j r}
        \leq \frac{162 R^2 \log (4/\alpha)}{N_j \gamma_j}
        = \frac{162 R^2 \log (4/\alpha)}{N_j} \cdot \frac{\sigma c_h \sqrt{N_j} h_j}{R}
        = 162 c_h \log \frac{4}{\alpha} \cdot R \sigma \frac{h_j}{\sqrt{N_j}}.
    \]
    Adding the stochastic estimate gives (ii).
\end{proof}

The latter-half estimate at the end of epoch \( j - 1 \) is exactly the quantity needed to initialize
the early half of epoch \( j \). This is what propagates a single constant across all active epochs.

\begin{lemma}[Epoch propagation] \label{lem:epoch-propagation}
    Let
    \[
        C_{\mathrm{latter}} := \left( 324 c_h \log \frac{4}{\alpha}
        + \frac{(2 + \sqrt{2})(K_{h_1} + K_{h_2})}{c_h} \right) R \sigma.
    \]
    On the event \( \mathcal{G} \cap \bigcap_{j \geq j_0} \bigcap_{1 \leq r \leq N_j} \mathcal{A}_{j,r} \),
    we have
    \[
        \Delta_t \leq C_{\mathrm{latter}} \frac{h(t)}{\sqrt{t}},
        \qquad \forall t \geq \frac{3 N_{j_0}}{2}.
    \]
\end{lemma}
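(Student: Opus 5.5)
Fix $t \ge 3N_{j_0}/2$ and write $t = N_j + r$ with $j \ge j_0$ and $0 \le r < N_j$, i.e.\ $N_j \le t < N_{j+1}$. Since $t \ge 3N_{j_0}/2$, either $j > j_0$, or $j = j_0$ and $r \ge N_{j_0}/2$. The plan is to bound $\Delta_t$ by $K \cdot R\sigma h_{j'}/\sqrt{N_{j'}}$ for a suitable epoch index $j'\in\{j-1,j\}$. We then convert this to $h(t)/\sqrt{t}$ using eventual monotonicity of $h$ (valid since $j_0 \ge J_h$) and $t < 2N_j$.

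\emph{Late half}, $N_j/2 \le r \le N_j$: I would apply Lemma~\ref{lem:within-epoch}(ii) directly. Then, since $h(t) \ge h_j$ and $\sqrt{t} \le \sqrt{2N_j}$, we get $h_j/\sqrt{N_j} \le \sqrt{2}\, h(t)/\sqrt{t}$. This gives
\[
\Delta_t \le \sqrt2\Bigl(162c_h\log\tfrac4\alpha+\tfrac{K_{h_1}+K_{h_2}}{c_h}\Bigr)R\sigma\,\tfrac{h(t)}{\sqrt t},
\]
which is at most $C_{\mathrm{latter}}h(t)/\sqrt t$ because $162\sqrt2 \le 324$ and $\sqrt2 \le 2+\sqrt2$.

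\emph{Early half}, $0\le r<N_j/2$, which forces $j\ge j_0+1$: the case $r=0$ is the endpoint $t=N_j=N_{j-1}+N_{j-1}$, covered by the late half of epoch $j-1$ with $r=N_{j-1}$. For $r\ge1$, I would decompose
\[
\Delta_{N_j+r} = \bigl(f(x_{N_j+r})-f(x_{N_j})\bigr) + \Delta_{N_j}.
\]
The first term is bounded by Lemma~\ref{lem:within-epoch}(i), i.e.\ $\frac{K_{h_1}+K_{h_2}}{c_h}R\sigma h_j/\sqrt{N_j}$, and then by $\sqrt2$ times that in terms of $h(t)/\sqrt t$. The second term is bounded by Lemma~\ref{lem:within-epoch}(ii) at epoch $j-1$, giving $\bigl(162c_h\log\tfrac4\alpha+\tfrac{K_{h_1}+K_{h_2}}{c_h}\bigr)R\sigma h_{j-1}/\sqrt{N_{j-1}}$. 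To convert it, use $h_{j-1}\le h_j\le h(t)$ and $\sqrt{N_{j-1}}=\sqrt{N_j/2}\ge \sqrt{t/3}$, since $t<3N_j/2$ in the early half. This yields a factor $\sqrt{3}\le 2$ on $h(t)/\sqrt t$. Summing, the coefficient of $c_h\log(4/\alpha)$ is at most $324$, and that of $(K_{h_1}+K_{h_2})/c_h$ is at most $\sqrt2+\sqrt3$. That exceeds $2+\sqrt2$ only if I am sloppy, so I would use $\sqrt3<2$ to keep it within $2+\sqrt2$.

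The only delicate point is this constant bookkeeping. In particular, the handoff factor between $h_{j-1}/\sqrt{N_{j-1}}$ and $h(t)/\sqrt t$ must use the sharper bound $t<3N_j/2$ rather than $t<2N_j$, so that the factor stays at most $2$ and matches the stated $C_{\mathrm{latter}}$. All events used are contained in the intersection assumed in the statement, so no further probability is needed.
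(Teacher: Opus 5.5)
Your proposal is correct and follows essentially the paper's proof. Late-half positions are handled by Lemma~\ref{lem:within-epoch}(ii), and early-half positions by Lemma~\ref{lem:within-epoch}(i) plus the handoff bound on $\Delta_{N_j}$ from epoch $j-1$, followed by conversion to $h(t)/\sqrt{t}$. One small correction: the sharper bound $t<3N_j/2$ is not actually needed. The paper uses the cruder $t\le 2N_j$, combining $h_{j-1}/\sqrt{N_{j-1}}\le\sqrt2\,h_j/\sqrt{N_j}$ with $h_j/\sqrt{N_j}\le\sqrt2\,h(t)/\sqrt t$ to get a total handoff factor of exactly $2$, and this already yields the stated constants $324$ and $2+\sqrt2$.
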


\begin{proof}
    Fix \( j > j_0 \) and \( 1 \leq r < N_j / 2 \). Since \( N_j = N_{j-1} + N_{j-1} \), the index
    \( N_j \) is the terminal position of epoch \( j - 1 \) reached with \( r = N_{j-1} \), so
    Lemma~\ref{lem:within-epoch}(ii) applied to epoch \( j - 1 \) gives
    \[
        \Delta_{N_j}
        \leq \left( 162 c_h \log \frac{4}{\alpha} + \frac{K_{h_1} + K_{h_2}}{c_h} \right)
        R \sigma \frac{h_{j-1}}{\sqrt{N_{j-1}}}.
    \]
    Adding this to the increment bound of Lemma~\ref{lem:within-epoch}(i) yields
    \begin{align*}
        \Delta_{N_j + r}
        &= \bigl( f(x_{N_j + r}) - f(x_{N_j}) \bigr) + \Delta_{N_j} \\
        &\leq \frac{K_{h_1} + K_{h_2}}{c_h} R \sigma \frac{h_j}{\sqrt{N_j}}
        + \left( 162 c_h \log \frac{4}{\alpha} + \frac{K_{h_1} + K_{h_2}}{c_h} \right)
        R \sigma \frac{h_{j-1}}{\sqrt{N_{j-1}}}.
    \end{align*}
    Since \( h_{j-1} \leq h_j \) and \( N_j = 2 N_{j-1} \), we have
    \( h_{j-1} / \sqrt{N_{j-1}} \leq \sqrt{2} h_j / \sqrt{N_j} \), whence
    \begin{equation} \label{eq:epoch-propagated-bound}
        \Delta_{N_j + r}
        \leq \left( 162 \sqrt{2} c_h \log \frac{4}{\alpha}
        + \frac{(\sqrt{2} + 1)(K_{h_1} + K_{h_2})}{c_h} \right) R \sigma \frac{h_j}{\sqrt{N_j}}.
    \end{equation}

    Now let \( t \geq 3 N_{j_0} / 2 \) and write \( t = N_j + r \) with \( j \geq j_0 \) and
    \( 1 \leq r \leq N_j \). If \( r \geq N_j / 2 \), then \( \Delta_t \) obeys
    Lemma~\ref{lem:within-epoch}(ii); if \( r < N_j / 2 \), then necessarily \( j > j_0 \) and
    \( \Delta_t \) obeys \eqref{eq:epoch-propagated-bound}, which dominates the former bound. In both
    cases
    \[
        \Delta_t \leq \left( 162 \sqrt{2} c_h \log \frac{4}{\alpha}
        + \frac{(\sqrt{2} + 1)(K_{h_1} + K_{h_2})}{c_h} \right) R \sigma \frac{h_j}{\sqrt{N_j}}.
    \]
    Finally, \( N_j \leq t \leq 2 N_j \) and \( h_j \leq h(t) \) by monotonicity, so
    \( h_j / \sqrt{N_j} \leq \sqrt{2} h(t) / \sqrt{t} \). Multiplying the bracket by \( \sqrt{2} \)
    gives \( C_{\mathrm{latter}} \).
\end{proof}

It remains to absorb the finitely many iterates preceding \( 3 N_{j_0} / 2 \).

\begin{lemma}[Prefix absorption] \label{lem:sufficiency-prefix}
    Let
    \[
        C_{\mathrm{prefix}} := \left( \frac{L R^2}{2}
        + \frac{K_{h_1} + K_{h_2}}{c_h} \cdot R \sigma \frac{h_{j_0}}{\sqrt{N_{j_0}}} \right)
        \bigg/ \min_{1 \leq n < 3 N_{j_0} / 2} \min \left\{ 1, \frac{h(n)}{\sqrt{n}} \right\}.
    \]
    On the event \( \mathcal{G} \cap \bigcap_{1 \leq r < N_{j_0} / 2} \mathcal{A}_{j_0, r} \), we have
    \[
        \Delta_t \leq C_{\mathrm{prefix}} \min \left\{ 1, \frac{h(t)}{\sqrt{t}} \right\},
        \qquad \forall\, 1 \leq t < \frac{3 N_{j_0}}{2}.
    \]
\end{lemma}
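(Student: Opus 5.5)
We split the prefix range $1 \leq t < 3N_{j_0}/2$ into two parts and handle each with a different mechanism, then divide by the smallest value of the target profile on this finite range.

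\emph{Frozen range, $1 \leq t \leq N_{j_0}$.} Algorithm~\ref{alg:schedule} sets $\eta_t = 0$ for every $t < N_{j_0}$. Hence $x_t = x_0$ deterministically for all $t \leq N_{j_0}$. Since $\nabla f(x^*) = 0$, $L$-smoothness and $\|x_0 - x^*\| \leq R$ give
\[
\Delta_t = f(x_0) - f^* \leq \frac{L}{2}\|x_0 - x^*\|^2 \leq \frac{LR^2}{2}.
\]
This part needs no probabilistic event at all.

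\emph{Early half of epoch $j_0$, $t = N_{j_0} + r$ with $1 \leq r < N_{j_0}/2$.} Here we use Lemma~\ref{lem:within-epoch}(i) on $\mathcal{G} \cap \mathcal{A}_{j_0,r}$, which gives
\[
f(x_{N_{j_0}+r}) - f(x_{N_{j_0}}) \leq \frac{K_{h_1}+K_{h_2}}{c_h}\, R\sigma\, \frac{h_{j_0}}{\sqrt{N_{j_0}}}.
\]
This is the only place the restart events enter. We then add $\Delta_{N_{j_0}} \leq LR^2/2$ from the frozen range. The result is
\[
\Delta_t \leq \frac{LR^2}{2} + \frac{K_{h_1}+K_{h_2}}{c_h}\, R\sigma\, \frac{h_{j_0}}{\sqrt{N_{j_0}}},
\]
which is exactly the numerator of $C_{\mathrm{prefix}}$. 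The frozen-range bound $LR^2/2$ is trivially dominated by the same numerator.

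\emph{Normalization.} The index set $\{1 \leq n < 3N_{j_0}/2\}$ is finite and $h > 0$, so
\[
m := \min_{1 \leq n < 3N_{j_0}/2} \min\left\{1, \frac{h(n)}{\sqrt{n}}\right\} > 0.
\]
For every $t$ in the range, $\min\{1, h(t)/\sqrt{t}\} \geq m$. Therefore the numerator bound is at most $C_{\mathrm{prefix}} \min\{1, h(t)/\sqrt{t}\}$.

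\emph{Main subtlety.} The only delicate point is justifying that Lemma~\ref{lem:within-epoch}(i) applies only for $r < N_{j_0}/2$. This holds because $y_{j_0,r} = x_{N_{j_0}}$ exactly on that range, by \eqref{eq:confidence-allocation}. The remaining points of the prefix are the indices $t \leq N_{j_0}$, which are covered by the frozen-range argument. So the hypotheses of the lemma match the event stated in the claim, and no epoch $j_0 - 1$ handoff is needed, since the schedule was frozen before $N_{j_0}$.
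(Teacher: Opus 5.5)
Your proposal is correct and follows essentially the same argument as the paper's proof. Because the steps are zero before $N_{j_0}$, $\Delta_t=\Delta_0\le LR^2/2$ holds deterministically for $t\le N_{j_0}$; on the early half of epoch $j_0$ you add the increment bound of Lemma~\ref{lem:within-epoch}(i) to $\Delta_{N_{j_0}}=\Delta_0$; and on the finite range you divide by the positive minimum of $\min\{1,h(n)/\sqrt{n}\}$.
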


\begin{proof}
    Algorithm~\ref{alg:schedule} sets \( \eta_t = 0 \) for every \( t < N_{j_0} \), so the iterates
    remain at the initial point and \( x_{N_{j_0}} = x_0 \). Smoothness at the minimizer gives the
    deterministic bound
    \begin{equation} \label{eq:first-active-epoch-base}
        \Delta_{N_{j_0}} = \Delta_t = \Delta_0
        \leq \frac{L}{2} \| x_0 - x^* \|^2 \leq \frac{L R^2}{2},
        \qquad \forall\, 1 \leq t \leq N_{j_0}.
    \end{equation}
    For \( 1 \leq r < N_{j_0} / 2 \), combining \eqref{eq:first-active-epoch-base} with
    Lemma~\ref{lem:within-epoch}(i) applied at \( j = j_0 \) gives
    \[
        \Delta_{N_{j_0} + r}
        = \bigl( f(x_{N_{j_0} + r}) - f(x_{N_{j_0}}) \bigr) + \Delta_{N_{j_0}}
        \leq \frac{L R^2}{2}
        + \frac{K_{h_1} + K_{h_2}}{c_h} \cdot R \sigma \frac{h_{j_0}}{\sqrt{N_{j_0}}}.
    \]
    The same constant dominates \eqref{eq:first-active-epoch-base}. Since the range
    \( 1 \leq t < 3 N_{j_0} / 2 \) is finite and \( \min \{ 1, h(n) / \sqrt{n} \} > 0 \) on it,
    dividing by that minimum gives the claim.
\end{proof}

%% file: sections/Appendix-C-auxiliary-results-necessity.tex
\section{Proofs and auxiliary results for necessity}
\label{appendix:auxiliary-results-necessity}

\subsection{Fixed-time analysis}
\label{appendixsub:fixed-time-analysis}

This subsection proves Lemmas~\ref{lem:deterministic-progress-obstruction} and
\ref{lem:stochastic-energy-obstruction}. For \( a > 0 \) and \( 0 < \lambda \leq L \), define
\[
    \varphi_{\lambda, a}(y) := \frac{\lambda}{2}
    \left(\sqrt{a^2 + y^2} - a\right)^2, \quad
    q_{\lambda}(y) := \frac{\lambda}{2} y^2.
\]

\begin{lemma}[Geometry of the flat-active family]
    \label{lem:appendix-flat-active-geometry}
   The function \(\varphi_{\lambda,a}\) is real analytic and convex on \(\mathbb{R}\). Moreover, it is \(\lambda\)-smooth, and hence \(L\)-smooth. Its unique minimizer is \(y^*=0\); however, it is not strongly convex on any neighborhood of \(y^*\) and does not satisfy a PL inequality with any positive constant. Furthermore, for every \(y\in\mathbb{R}\),
\begin{align*}
\left|\varphi_{\lambda,a}'(y)-\lambda y\right|
&\leq \lambda a,\quad
0\leq q_\lambda(y)-\varphi_{\lambda,a}(y)
\leq \lambda a|y|.
\end{align*}
\end{lemma}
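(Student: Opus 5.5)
The plan is to compute everything from the explicit formula, writing $s:=\sqrt{a^2+y^2}$, so that $\varphi_{\lambda,a}(y)=\frac{\lambda}{2}(s-a)^2$. Analyticity is immediate, since $a^2+y^2\geq a^2>0$ and hence $s$ is a real analytic function of $y$ on all of $\mathbb{R}$. Differentiating gives
\[
    \varphi_{\lambda,a}'(y)=\lambda(s-a)\frac{y}{s}=\lambda y\Bigl(1-\frac{a}{s}\Bigr),
\]
and differentiating once more gives
\[
    \varphi_{\lambda,a}''(y)=\lambda\Bigl(1-\frac{a}{s}\Bigr)+\lambda y\cdot\frac{a y}{s^3}=\lambda\Bigl(1-\frac{a}{s}+\frac{a y^2}{s^3}\Bigr)=\lambda\Bigl(1-\frac{a^3}{s^3}\Bigr),
\]
where the last equality uses $y^2=s^2-a^2$. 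Since $s\geq a$, this yields $0\leq\varphi_{\lambda,a}''\leq\lambda$. The lower bound gives convexity, and the upper bound gives $\lambda$-smoothness, hence $L$-smoothness because $\lambda\leq L$.

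For the minimizer, note that $\varphi_{\lambda,a}\geq0$ with equality iff $s=a$, that is, iff $y=0$; so $y^*=0$ is the unique minimizer. For the negative properties, I would use $\varphi''(0)=0$ together with the Taylor expansion $s-a=y^2/(2a)+O(y^4)$. This gives $\varphi_{\lambda,a}(y)=\frac{\lambda}{8a^2}y^4+O(y^6)$, so $\varphi$ grows only quartically and no quadratic lower bound holds on any neighborhood of $0$; this rules out strong convexity near $y^*$. For PL, the expansion gives $\varphi'(y)^2=\frac{\lambda^2}{4a^4}y^6(1+o(1))$, while $\varphi(y)\sim\frac{\lambda}{8a^2}y^4$. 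The ratio $\varphi'(y)^2/\varphi(y)\sim\frac{2\lambda}{a^2}y^2\to0$ as $y\to0$, so no inequality $\varphi'^2\geq2\mu\varphi$ with $\mu>0$ can hold.

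For the first quantitative bound, the formula above gives $\varphi'(y)-\lambda y=-\lambda a y/s$. Since $|y|\leq s$, this has absolute value at most $\lambda a$. For the second bound, I would expand the difference directly:
\[
    q_\lambda(y)-\varphi_{\lambda,a}(y)=\frac{\lambda}{2}\bigl(y^2-(s-a)^2\bigr)=\frac{\lambda}{2}\bigl(s^2-a^2-s^2+2as-a^2\bigr)=\lambda a(s-a).
\]
This is nonnegative, and it is at most $\lambda a|y|$ because $s\leq a+|y|$ by the triangle inequality $\sqrt{a^2+y^2}\leq a+|y|$.

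I do not expect a genuine obstacle. The only step needing care is the PL/strong-convexity claim, where one must state the definitions used. Because the minimizer is unique and $\varphi(0)=0$, I would phrase PL as $\tfrac12\varphi'(y)^2\geq\mu\,\varphi(y)$ for all $y$, and phrase strong convexity near $0$ as $\varphi''\geq\mu>0$ on a neighborhood of $0$. The first is refuted by the small-$y$ asymptotic ratio above, and the second by $\varphi''(0)=0$ together with continuity of $\varphi''$.
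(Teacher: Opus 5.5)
Your proposal is correct and follows essentially the same route as the paper. You use the same formulas $\varphi_{\lambda,a}'(y)=\lambda y(1-a/s)$ and $\varphi_{\lambda,a}''(y)=\lambda(1-a^3/s^3)\in[0,\lambda]$, the same appeal to $\varphi_{\lambda,a}''(0)=0$, and the same identity $q_\lambda(y)-\varphi_{\lambda,a}(y)=\lambda a(s-a)$. The only cosmetic difference is in the PL refutation: the paper uses the exact ratio $\varphi_{\lambda,a}'(y)^2/(2\varphi_{\lambda,a}(y))=\lambda y^2/(a^2+y^2)$, which is slightly cleaner than your small-$y$ asymptotics but equivalent.
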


\begin{proof}
    Let \( s := \sqrt{a^2 + y^2} \). Direct differentiation gives
    \[
        \varphi_{\lambda, a}'(y) = \lambda y \left(1 - \frac{a}{s}\right), \quad
        \varphi_{\lambda, a}''(y) = \lambda
        \left(1 - \frac{a^3}{(a^2 + y^2)^{3/2}}\right) \in [0, \lambda].
    \]
    Thus, \( \varphi_{\lambda, a} \) is convex and \( \lambda \)-smooth, and hence also
    \( L \)-smooth. It is nonnegative and vanishes only at zero, so zero is its unique minimizer.
    Since \( \varphi_{\lambda, a}''(0) = 0 \), it is not strongly convex on any neighborhood of
    zero. For \( y \neq 0 \),
    \[
        \frac{\varphi_{\lambda, a}'(y)^2}{2\varphi_{\lambda, a}(y)}
        = \lambda \frac{y^2}{a^2 + y^2} \to 0, \quad \text{as } y \to 0,
    \]
    which rules out a PL inequality with a positive constant. The identity
    \[
        \sqrt{a^2 + y^2} - a = \frac{y^2}{\sqrt{a^2 + y^2} + a}
    \]
    further gives
    \[
        \varphi_{\lambda, a}(y) = \frac{\lambda}{8a^2} y^4 + \mathcal{O}(y^6),
        \quad \text{as } y \to 0.
    \]
    Therefore, the objective is quartic-flat at its minimizer.

    Finally,
    \[
        |\varphi_{\lambda, a}'(y) - \lambda y|
        = \frac{\lambda a |y|}{\sqrt{a^2 + y^2}}
        \leq \lambda a.
    \]
    Using \( y^2 = (s-a)(s+a) \), we also obtain
    \[
        q_{\lambda}(y) - \varphi_{\lambda, a}(y)
        = \lambda a(s-a) \in [0, \lambda a|y|].
    \]
\end{proof}

We next show that, over a fixed number of iterations, the recursion for
\( \varphi_{\lambda,a} \) converges pathwise to the corresponding quadratic recursion as
\( a \downarrow 0 \).

\begin{lemma}[Finite-horizon flat transfer]
    \label{lem:appendix-quartic-to-quadratic}
    Fix \( n \in \mathbb{N}_{+} \) and suppose \( \lambda S_n \leq 1/2 \). From the same initial
    point and with the same noise sequence, define
    \begin{align*}
        Y_{t+1}^{(a)} &= Y_t^{(a)} - \eta_t
        \left(\varphi_{\lambda,a}'(Y_t^{(a)}) + \zeta_t\right), \\
        Y_{t+1}^{(0)} &= Y_t^{(0)} - \eta_t
        \left(\lambda Y_t^{(0)} + \zeta_t\right).
    \end{align*}
    Then, pathwise,
    \[
        \max_{0 \leq t \leq n}|Y_t^{(a)} - Y_t^{(0)}|
        \leq \lambda aS_n \leq \frac{a}{2}.
    \]
    In particular,
    \[
        \varphi_{\lambda,a}(Y_n^{(a)}) \to q_{\lambda}(Y_n^{(0)}),
        \quad \text{as } a \downarrow 0.
    \]
\end{lemma}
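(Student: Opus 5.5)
The plan is a discrete Gr\"onwall comparison of the two recursions driven by the same noise. Set \(e_t := Y_t^{(a)} - Y_t^{(0)}\); then \(e_0 = 0\), and subtracting the two updates gives
\[
    e_{t+1} = e_t - \eta_t\bigl(\varphi_{\lambda,a}'(Y_t^{(a)}) - \lambda Y_t^{(0)}\bigr).
\]
The noise terms cancel exactly, so the argument is deterministic and pathwise. Next I split the drift difference as
\[
    \varphi_{\lambda,a}'(Y_t^{(a)}) - \lambda Y_t^{(0)} = \bigl(\varphi_{\lambda,a}'(Y_t^{(a)}) - \lambda Y_t^{(a)}\bigr) + \lambda e_t.
\]
By Lemma~\ref{lem:appendix-flat-active-geometry}, the first bracket has absolute value at most \(\lambda a\). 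This yields the one-step estimate
\[
    |e_{t+1}| \leq (1 - \eta_t\lambda)\,|e_t| + \eta_t \lambda a.
\]
The factor \(1-\eta_t\lambda\) is nonnegative because \(\eta_t\lambda \leq \lambda S_n \leq 1/2\) for \(t<n\). A cruder bound \(|1-\eta_t\lambda|\leq 1\) would also suffice.

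Dropping the contraction gives \(|e_{t+1}| \leq |e_t| + \eta_t\lambda a\). Summing from \(0\) to \(t-1\) gives \(|e_t| \leq \lambda a S_t \leq \lambda a S_n\) for all \(0\le t\le n\), since \(S_t\) is nondecreasing and the stepsizes are nonnegative. The hypothesis \(\lambda S_n \le 1/2\) then gives the bound \(a/2\). This settles the first claim.

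For the limit, I write
\[
    |\varphi_{\lambda,a}(Y_n^{(a)}) - q_\lambda(Y_n^{(0)})| \le |\varphi_{\lambda,a}(Y_n^{(a)}) - q_\lambda(Y_n^{(a)})| + |q_\lambda(Y_n^{(a)}) - q_\lambda(Y_n^{(0)})|.
\]
By Lemma~\ref{lem:appendix-flat-active-geometry}, the first term is at most \(\lambda a|Y_n^{(a)}| \le \lambda a(|Y_n^{(0)}| + a/2)\). The second term is
\[
    \frac{\lambda}{2}\,|Y_n^{(a)}-Y_n^{(0)}|\,|Y_n^{(a)}+Y_n^{(0)}| \le \frac{\lambda a}{2}\,(2|Y_n^{(0)}|+a/2).
\]
The quadratic path \(Y^{(0)}\) does not depend on \(a\), so both bounds are \(O(a)\) pathwise and tend to zero as \(a\downarrow 0\).

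The only delicate point is ensuring that the error does not compound multiplicatively. This is handled by the uniform bound \(\lambda a\) on \(\varphi'_{\lambda,a}(y)-\lambda y\), together with the fact that the linear part has coefficient in \([0,1]\). That fact is exactly where \(\lambda S_n\le 1/2\) enters (and \(\lambda\le L\) is not even needed). I do not anticipate any serious obstacle.
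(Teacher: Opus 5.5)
Your proposal is correct and follows essentially the same route as the paper: the same error recursion \(e_{t+1}=(1-\lambda\eta_t)e_t-\eta_t\bigl(\varphi_{\lambda,a}'(Y_t^{(a)})-\lambda Y_t^{(a)}\bigr)\), the uniform bound \(\lambda a\) from Lemma~\ref{lem:appendix-flat-active-geometry}, summation to \(\lambda a S_n\le a/2\), and the same triangle-inequality split for the convergence of the objective values. The only cosmetic differences are that you drop the contraction factor before summing, whereas the paper unrolls the product and then bounds it by one, and that you make the \(O(a)\) rate of the final convergence explicit.
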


\begin{proof}
    Let \( D_t := Y_t^{(a)} - Y_t^{(0)} \) and
    \( r_a(y) := \varphi_{\lambda,a}'(y) - \lambda y \). The coupled recursions imply
    \[
        D_{t+1} = (1 - \lambda\eta_t)D_t - \eta_t r_a(Y_t^{(a)}).
    \]
    Since \( 0 \leq \lambda\eta_t \leq \lambda S_n \leq 1/2 \),
    Lemma~\ref{lem:appendix-flat-active-geometry} gives
    \[
        |D_{t+1}| \leq (1 - \lambda\eta_t)|D_t| + \lambda a\eta_t.
    \]
    Starting from \( D_0 = 0 \), for every \( 1 \leq k \leq n \), we have
    \[
        |D_k| \leq \lambda a \sum_{i=0}^{k-1}\eta_i
        \prod_{r=i+1}^{k-1}(1 - \lambda\eta_r)
        \leq \lambda aS_n \leq \frac{a}{2}.
    \]
    This proves the uniform pathwise bound. In particular, \( Y_n^{(a)} \to Y_n^{(0)} \) as
    \( a \downarrow 0 \). Lemma~\ref{lem:appendix-flat-active-geometry} and the continuity of
    \( q_{\lambda} \) yield
    \begin{align*}
        |\varphi_{\lambda,a}(Y_n^{(a)}) - q_{\lambda}(Y_n^{(0)})|
        &\leq |\varphi_{\lambda,a}(Y_n^{(a)}) - q_{\lambda}(Y_n^{(a)})| 
        + |q_{\lambda}(Y_n^{(a)}) - q_{\lambda}(Y_n^{(0)})| \\
        &\leq \lambda a|Y_n^{(a)}| + \frac{\lambda}{2}
        |Y_n^{(a)} - Y_n^{(0)}|(|Y_n^{(a)}| + |Y_n^{(0)}|)
        \to 0.
    \end{align*}
\end{proof}

The preceding pathwise convergence also transfers any marginal coverage inequality from the
quartic-flat family to the limiting quadratic model.

\begin{lemma}[Transfer of marginal coverage]
    \label{lem:appendix-marginal-flat-transfer}
    Fix \( n \in \mathbb{N}_{+} \), an initial point, and a noise distribution, and suppose
    \( \lambda S_n \leq 1/2 \). If
    \[
        \mathbb{P}\left(\varphi_{\lambda,a}(Y_n^{(a)})
        \leq \varepsilon_{\alpha}(n)\right) \geq 1 - \alpha,
        \quad \forall a > 0,
    \]
    then
    \[
        \mathbb{P}\left(q_{\lambda}(Y_n^{(0)})
        \leq \varepsilon_{\alpha}(n)\right) \geq 1 - \alpha.
    \]
\end{lemma}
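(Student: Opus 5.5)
The plan is to pass from the quartic-flat family to the quadratic model through the pathwise coupling of Lemma~\ref{lem:appendix-quartic-to-quadratic}, and to handle the boundary of the event with a closedness argument. Fix $n$, the initial point, and the noise law, and realize all processes on one probability space with the same noise sequence $\{\zeta_t\}$. For each $a>0$ put $U_a := \varphi_{\lambda,a}(Y_n^{(a)})$ and $U_0 := q_\lambda(Y_n^{(0)})$. Lemma~\ref{lem:appendix-quartic-to-quadratic} gives $U_a \to U_0$ pathwise as $a \downarrow 0$; it only needs $\lambda S_n \le 1/2$, which is assumed.

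Next I would pass to a sequence $a_k \downarrow 0$, for instance $a_k = 1/k$. Write $E_k := \{U_{a_k} \le \varepsilon_\alpha(n)\}$. By hypothesis, $\mathbb{P}(E_k) \ge 1-\alpha$ for every $k$.

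The key observation is a limsup inclusion. If $\omega$ lies in $E_k$ for infinitely many $k$, then along that subsequence $U_{a_k}(\omega) \le \varepsilon_\alpha(n)$. Since $U_{a_k}(\omega) \to U_0(\omega)$, the closed inequality passes to the limit and gives $U_0(\omega) \le \varepsilon_\alpha(n)$. Hence
\[
\limsup_k E_k \subseteq \{U_0 \le \varepsilon_\alpha(n)\}.
\]

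The reverse Fatou lemma (equivalently, Fatou applied to the complements) then gives
\[
\mathbb{P}\bigl(\limsup_k E_k\bigr) \ge \limsup_k \mathbb{P}(E_k) \ge 1-\alpha,
\]
and the claim follows.

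The only delicate point is that convergence in distribution alone would not suffice, because the threshold event could carry mass at its boundary. The pathwise coupling together with the non-strict inequality avoids this issue, so I would state the limsup inclusion explicitly. For measurability, each $U_a$ is a continuous function of finitely many Gaussian noises, so every set above is measurable. I expect no real obstacle beyond carefully justifying this limsup inclusion.
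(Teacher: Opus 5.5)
Your argument is correct and is essentially the paper's: the paper combines the pathwise convergence from Lemma~\ref{lem:appendix-quartic-to-quadratic} with the closed-set part of the Portmanteau theorem, and your limsup inclusion plus reverse Fatou is a hands-on proof of that closed-set inequality under almost-sure convergence. One correction to your aside: because $(-\infty,\varepsilon_\alpha(n)]$ is closed, convergence in distribution alone would already suffice (this is exactly the Portmanteau inequality the paper invokes), since boundary mass only obstructs the reverse inequality $\liminf_k \mathbb{P}(E_k) \ge \mathbb{P}(U_0 \le \varepsilon_\alpha(n))$.
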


\begin{proof}
    Lemma~\ref{lem:appendix-quartic-to-quadratic} gives almost-sure convergence of the terminal
    objective values. Since \( (-\infty, \varepsilon_{\alpha}(n)] \) is closed, the closed-set part
    of the Portmanteau theorem gives
    \[
        \mathbb{P}\left(q_{\lambda}(Y_n^{(0)}) \leq \varepsilon_{\alpha}(n)\right)
        \geq \limsup_{a \downarrow 0}
        \mathbb{P}\left(\varphi_{\lambda,a}(Y_n^{(a)})
        \leq \varepsilon_{\alpha}(n)\right)
        \geq 1 - \alpha.
    \]
\end{proof}

We are now ready to prove the two fixed-horizon restrictions stated in the main text.

\begin{proof}[Proof of Lemma~\ref{lem:deterministic-progress-obstruction}]
    If \( S_n = 0 \), set \( \lambda_n := L/2 \). Otherwise, set
    \( \lambda_n := \min\{L, 1/S_n\}/2 \). In both cases, \( 0 < \lambda_n \leq L \) and
    \( \lambda_nS_n \leq 1/2 \). Start the quartic-flat recursion at \( Y_0^{(a)} = R \) and use
    zero noise. The limiting quadratic recursion satisfies
    \[
        Y_n^{(0)} = R\prod_{t<n}(1 - \lambda_n\eta_t).
    \]
    Every factor lies in \( [0,1] \). Therefore, the product inequality
    \( \prod_i(1-u_i) \geq 1 - \sum_i u_i \) gives
    \[
        |Y_n^{(0)}| \geq R(1 - \lambda_nS_n) \geq \frac{R}{2}.
    \]
    The zero-noise oracle is admissible for every \( a > 0 \). Time-uniform validity thus implies
    the marginal premise of Lemma~\ref{lem:appendix-marginal-flat-transfer}, and hence
    \[
        \varepsilon_{\alpha}(n)
        \geq q_{\lambda_n}(Y_n^{(0)})
        \geq \frac{\lambda_nR^2}{8}
        = \frac{R^2}{16}\min\left\{L, \frac{1}{S_n}\right\}.
    \]
\end{proof}

\begin{proof}[Proof of Lemma~\ref{lem:stochastic-energy-obstruction}]
    Set \( \lambda_n := 1/(2S_n) \), start from \( Y_0^{(a)} = 0 \), and take independent noises
    \( \zeta_t \sim \mathcal{N}(0, \sigma^2/4) \). This oracle is admissible because
    \[
        \mathbb{E}\left[\exp\left(\frac{\zeta_t^2}{\sigma^2}\right)\right]
        = \sqrt{2} < \mathrm{e}.
    \]
    Since \( S_n \geq 1/L \), we have \( \lambda_n \leq L/2 \), while
    \( \lambda_nS_n = 1/2 \). Lemma~\ref{lem:appendix-marginal-flat-transfer} therefore applies.
    The limiting quadratic recursion can be unrolled as
    \[
        Y_n^{(0)} = -\sum_{t<n}\eta_t\zeta_t
        \prod_{s=t+1}^{n-1}(1 - \lambda_n\eta_s).
    \]
    For every \( t<n \), the product inequality used above gives
    \[
        \prod_{s=t+1}^{n-1}(1 - \lambda_n\eta_s)
        \geq 1 - \lambda_n\sum_{s=t+1}^{n-1}\eta_s
        \geq \frac{1}{2}.
    \]
    Thus, \( Y_n^{(0)} \) is centered Gaussian with
    \[
        \operatorname{Var}(Y_n^{(0)})
        = \frac{\sigma^2}{4}\sum_{t<n}\eta_t^2
        \prod_{s=t+1}^{n-1}(1 - \lambda_n\eta_s)^2
        \geq \frac{\sigma^2Q_n}{16}.
    \]
    Marginal coverage of \( q_{\lambda_n}(Y_n^{(0)}) \) is equivalent to
    \[
        \mathbb{P}\bigg(
            |Y_n^{(0)}| \leq \sqrt{\frac{2\varepsilon_{\alpha}(n)}{\lambda_n}}
        \bigg) \geq 1 - \alpha.
    \]
    By the definition \( z_{\alpha} = \Phi^{-1}(1 - \alpha/2) \), the two-sided Gaussian quantile
    requires
    \[
        \frac{2\varepsilon_{\alpha}(n)}{\lambda_n}
        \geq z_{\alpha}^2\operatorname{Var}(Y_n^{(0)})
        \geq \frac{z_{\alpha}^2\sigma^2Q_n}{16}.
    \]
    Substituting \( \lambda_n = 1/(2S_n) \) proves the result.
\end{proof}

Corollary~\ref{cor:step-lower-bound} is a direct result from Lemma~\ref{lem:deterministic-progress-obstruction} and \ref{lem:stochastic-energy-obstruction}.

\begin{proof} [Proof of Corollary~\ref{cor:step-lower-bound}]
    Since \( h(n) = o(\sqrt{n}) \), the boundary \( \varepsilon_{\alpha}(n) \to 0 \) as \( n \to \infty \).
    Therefore, by Lemma~\ref{lem:deterministic-progress-obstruction}, for sufficiently large \( n \), we have \( S_{n} \geq 1/L \), and additionally, 
    \[
        \frac{R^2}{16} \frac{1}{S_{n}} \leq B \frac{h(n)}{\sqrt{n}} \implies 
        S_{n} \geq \frac{R^2}{16 B} \frac{\sqrt{n}}{h(n)} \to \infty.
    \]
    Thus, for sufficiently large \( j \), Lemma~\ref{lem:stochastic-energy-obstruction} applies, yielding
    \[
        \frac{z_{\alpha}^2 \sigma^2}{64} \frac{\mathcal{Q}_j}{S_{N_j}} \leq B \frac{h_j}{\sqrt{N_j}}
        \implies p_j \geq \frac{z_{\alpha}^2 \sigma^2}{64 B} \frac{\mathcal{Q}_j}{h_j}.
    \]
\end{proof}

\subsection{Preliminaries for the fixed Gaussian instance}
\label{appendixsub:preliminaries-Gaussian}

As mentioned in Section~\ref{subsec:necessity-witness}, the instance is given by
\[
    \phi(y) := \frac{L}{4} (\sqrt{R^2 + y^2} - R)^2 = \varphi_{L/2, R}(y).
\]
The oracle noise is Gaussian with variance \( \nu^2 := \vartheta \sigma^2 \), where
\( \vartheta := (1 - \mathrm{e}^{-2})/2 \). The recursion is initialized at the minimizer
and uses independent \( \zeta_t \sim \mathcal{N}(0, \nu^2) \) as the oracle noise:
\[
    Y_{t+1} = Y_{t} - \eta_t (\phi'(Y_t) + \zeta_t), \quad Y_0 = 0.
\]
We now give some properties of this instance that will be used in the necessity proof.

\begin{lemma} \label{lem:geometry-phi}
    The function \( \phi \) is convex, \( L \)-smooth, and non-PL. It is quartic-flat at the minimizer,
    and its derivative satisfies
    \[
        |\phi'(y)| \leq \frac{L|y|^3}{4R^2}, \quad \forall y \in \mathbb{R}.
    \]
\end{lemma}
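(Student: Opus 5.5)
The plan is to reduce everything to the family \(\varphi_{\lambda,a}\) of Lemma~\ref{lem:appendix-flat-active-geometry}, since \(\phi=\varphi_{L/2,R}\). Convexity, \(L\)-smoothness (indeed \((L/2)\)-smoothness), the non-PL property and the quartic flatness \(\phi(y)=\frac{L/2}{8R^2}y^4+\mathcal{O}(y^6)=\frac{L}{16R^2}y^4+\mathcal{O}(y^6)\) all follow directly from that lemma with \(\lambda=L/2\) and \(a=R\). What remains is the explicit derivative formula and the cubic bound.

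For the derivative, I would set \(s:=\sqrt{R^2+y^2}\) and differentiate to get
\[
\phi'(y)=\frac{L}{2}(s-R)\frac{y}{s}.
\]
Using the rationalization \(s-R=y^2/(s+R)\), this becomes
\[
\phi'(y)=\frac{Ly^3}{2s(s+R)},
\]
which is the displayed formula in Section~\ref{subsec:necessity-witness}.

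The bound then comes from the elementary inequalities \(s\ge R\) and \(s+R\ge 2R\). Together they give \(2s(s+R)\ge 4R^2\), and hence \(|\phi'(y)|\le L|y|^3/(4R^2)\) for all \(y\), including \(y=0\), where both sides vanish.

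There is no real obstacle here. The only point needing care is matching constants, namely that \(\lambda=L/2\) turns the coefficient \(\lambda/(8a^2)\) into \(L/(16R^2)\). I would also note that \(L\)-smoothness follows from \(0\le\phi''\le L/2\le L\).
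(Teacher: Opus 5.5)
Your proposal is correct and follows the paper's proof essentially verbatim. You inherit the qualitative properties from Lemma~\ref{lem:appendix-flat-active-geometry} via \(\phi=\varphi_{L/2,R}\), and you get the cubic bound from the explicit formula \(\phi'(y)=Ly^3/(2s(s+R))\) together with \(s(s+R)\ge 2R^2\); your constant check \(\lambda/(8a^2)=L/(16R^2)\) and the observation \(0\le\phi''\le L/2\) are also accurate.
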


\begin{proof}
    The first three properties follow from Lemma~\ref{lem:appendix-flat-active-geometry}. The derivative
    is given by
    \[
        \phi'(y) = \frac{Ly^3}{2\sqrt{R^2+y^2}(\sqrt{R^2+y^2}+R)}.
    \]
    Since \( \sqrt{R^2+y^2}(\sqrt{R^2+y^2}+R) \geq 2R^2 \), we have
    \[
        |\phi'(y)| \leq \frac{L|y|^3}{4R^2}.
    \]
\end{proof}

This instance forces the stepsize to vanish asymptotically, as stated in the following lemma.

\begin{lemma} \label{lem:deminishing-stepsize}
    Under the assumptions of Theorem~\ref{thm:necessity}, the stepsize sequence \( \{\eta_t\} \) must
    satisfy \( \eta_t \to 0 \) as \( t \to \infty \).
\end{lemma}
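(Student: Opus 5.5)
We argue by contradiction. Suppose $\eta_t \not\to 0$. Then there exist $c>0$ and an infinite set of indices $t_1<t_2<\cdots$ with $\eta_{t_k}\ge c$. We will show that on the fixed Gaussian instance $Y_{t+1}=Y_t-\eta_t(\phi'(Y_t)+\zeta_t)$, $Y_0=0$, the all-time success event $\mathcal{C}$ fails with probability one. This contradicts $\mathbb{P}(\mathcal{C})\ge 1-\alpha>0$.

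The mechanism is a single-step anti-concentration bound. Fix $k$ and condition on $\mathcal{F}_{t_k}$. Then $Y_{t_k+1}=Y_{t_k}-\eta_{t_k}\phi'(Y_{t_k})-\eta_{t_k}\zeta_{t_k}$. Here the first two terms are $\mathcal{F}_{t_k}$-measurable, and $\zeta_{t_k}\sim\mathcal{N}(0,\nu^2)$ is independent of $\mathcal{F}_{t_k}$. So, conditionally, $Y_{t_k+1}$ is Gaussian with standard deviation $\eta_{t_k}\nu\ge c\nu$ and some random mean $\mu_k$. On $\mathcal{C}$ and for $t_k+1\ge n_0$, we have $\phi(Y_{t_k+1})\le \varepsilon_\alpha(t_k+1)\le Bh(t_k+1)/\sqrt{t_k+1}=:u_k\to 0$, because $h(n)=o(\sqrt n)$. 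Hence $|Y_{t_k+1}|\le r(u_k)$, where $r^2(u)=4R\sqrt{u/L}+4u/L\to 0$ as $u\to0$; the sublevel set of $\phi$ is an interval $[-r,r]$ by convexity and symmetry. A Gaussian with standard deviation at least $c\nu$ puts mass at most $2r(u_k)/(c\nu\sqrt{2\pi})$ on any interval of length $2r(u_k)$, whatever its mean. Therefore
\[
\mathbb{P}\bigl(|Y_{t_k+1}|\le r(u_k)\,\big|\,\mathcal{F}_{t_k}\bigr)\le \frac{2r(u_k)}{c\nu\sqrt{2\pi}}=:\pi_k\to 0 \quad\text{a.s.}
\]

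To conclude, take $k$ large and use $\mathcal{C}\subseteq\{|Y_{t_k+1}|\le r(u_k)\}$. Taking expectations gives $\mathbb{P}(\mathcal{C})\le \pi_k$. Letting $k\to\infty$ yields $\mathbb{P}(\mathcal{C})=0<1-\alpha$, which is the contradiction. This needs only one bad step per index, so no independence across blocks is required. Alternatively, one could chain the conditional bounds to get $\prod_k \pi_k$, but this is unnecessary.

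The main point requiring care is that the conditional mean $\mu_k$ is random and possibly unbounded. The argument avoids controlling it because the Gaussian density is bounded by $1/(\eta_{t_k}\nu\sqrt{2\pi})$ uniformly in the mean. The other delicate inputs are easy. The noise $\zeta_{t_k}$ must be independent of $\mathcal{F}_{t_k}$ in this witness, which holds since the seeds are fresh. We also need $u_k\to0$, which uses only the envelope hypothesis of Theorem~\ref{thm:necessity} and $h\in\mathcal{H}_0$. Finally, $r(u)\to0$ follows from the explicit sublevel radius $\beta_j^2=r^2(\bar\varepsilon_j)$ already recorded in the paper.
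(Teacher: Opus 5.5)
Your proof is correct and is essentially the paper's argument: a one-step Gaussian anti-concentration bound, uniform in the random conditional mean because the conditional density is at most $1/(\sqrt{2\pi}\,\nu\eta_t)$, applied to the shrinking sublevel interval $\{|y|\le r(\varepsilon_\alpha(t+1))\}$. The only difference is presentation: you argue by contradiction along a subsequence with $\eta_{t_k}\ge c$, whereas the paper reads off the direct quantitative bound $\eta_t\le \sqrt{2/\pi}\, r(\varepsilon_\alpha(t+1))/(\nu(1-\alpha))$ for every positive $\eta_t$.
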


\begin{proof}
    For every \( u \geq 0 \), \( \phi(y) \leq u \) is equivalent to
    \[
        |y|^2 \leq r^2(u) := 4R \sqrt{\frac{u}{L}} + \frac{4u}{L}.
    \]
    It is easy to see that \( r^2(u) \to 0 \) as \( u \to 0^+ \). Since \( h(n) / \sqrt{n} \to 0 \),
    we have \( \varepsilon_{\alpha}(n) \to 0 \) as \( n \to \infty \). Hence, \(r^2(\varepsilon_{\alpha}(n))
    \to 0\) as \( n \to \infty \). Conditional on the past, \( Y_{t+1} \) is a translate of
    \( \mathcal{N}(0, \nu^2 \eta_t^2) \). When \( \eta_t > 0 \), its conditional density is bounded by
    \( (\sqrt{2\pi}\nu\eta_t)^{-1} \), regardless of the conditional mean. Integrating this bound over
    \( [-r(\varepsilon_{\alpha}(t+1)), r(\varepsilon_{\alpha}(t+1))] \) yields
    \[
        1-\alpha \leq \mathbb{P} \left( |Y_{t+1}| \leq r(\varepsilon_{\alpha}(t+1)) \right)
        \leq \sqrt{\frac{2}{\pi}} \frac{r(\varepsilon_{\alpha}(t+1))}{\nu\eta_t}.
    \]
    Thus every positive \( \eta_t \) is bounded by a constant multiple of a quantity tending to zero.
    Zero steps already satisfy the same conclusion, so \( \eta_t \to 0 \).
\end{proof}

In Section~\ref{subsec:necessity-witness}, we define \( N_j = 2^j \), \( h_j = h(N_j) \), and
\[
    \hat{h}_j := \max_{\substack{N_j \leq n \leq N_{j+1} \\ n \in \mathbb{N}_{+}}} h(n) \sqrt{\frac{N_j}{n}}.
\]
We also define \( \bar{\varepsilon}_j = B \hat{h}_j / \sqrt{N_j} \) and \(\beta_j^2 := r^2(\bar
{\varepsilon}_j)\). Now we will give a lower bound and an upper bound for \( \beta_j^4 \).

\begin{lemma} \label{lem:tolerance-bound}
    For sufficiently large \( j \in \mathbb{N}_+ \), we have
    \[
        \frac{16 R^2 \bar{\varepsilon}_j}{L} \leq \beta_j^4 \leq \frac{64 R^2 \bar{\varepsilon}_j}{L}.
    \]
\end{lemma}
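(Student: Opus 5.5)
The bound is an elementary algebraic estimate on $r^2(u)=4R\sqrt{u/L}+4u/L$ evaluated at $u=\bar{\varepsilon}_j$, once we know that $\bar{\varepsilon}_j$ is small. I will square the definition directly:
\[
\beta_j^4=\Bigl(4R\sqrt{\bar{\varepsilon}_j/L}+4\bar{\varepsilon}_j/L\Bigr)^2
=\frac{16R^2\bar{\varepsilon}_j}{L}+\frac{32R\,\bar{\varepsilon}_j^{3/2}}{L^{3/2}}+\frac{16\bar{\varepsilon}_j^2}{L^2}.
\]

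\textbf{Lower bound.} All three terms are nonnegative, so dropping the last two gives $\beta_j^4\geq 16R^2\bar{\varepsilon}_j/L$. This holds for every $j$.

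\textbf{Upper bound.} It suffices to show that the two extra terms together are at most $48R^2\bar{\varepsilon}_j/L$. Divide both by $16R^2\bar{\varepsilon}_j/L$ and set $s:=\sqrt{\bar{\varepsilon}_j/L}/R$. The two terms become $2s$ and $s^2$ times the leading term. So it is enough to have $2s+s^2\leq 3$, which holds whenever $s\leq 1$, that is, whenever $\bar{\varepsilon}_j\leq LR^2$.

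\textbf{Main step: $\bar{\varepsilon}_j\to 0$.} Recall $\bar{\varepsilon}_j=B\hat{h}_j/\sqrt{N_j}$. For $j\geq j_h$, eventual monotonicity gives $\hat{h}_j\leq h_{j+1}=h(2N_j)$, as noted in Section~\ref{subsec:necessity-witness}. Hence
\[
\bar{\varepsilon}_j\leq \sqrt{2}\,B\,\frac{h(N_{j+1})}{\sqrt{N_{j+1}}}.
\]
Since $h(n)=o(\sqrt{n})$, the right-hand side tends to $0$. Alternatively, one can bound $\hat{h}_j\leq \max_{N_j\le n\le N_{j+1}}h(n)\sqrt{N_j/n}$ directly and use $h(n)/\sqrt n\to0$ uniformly over the epoch. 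Either way $\bar{\varepsilon}_j\leq LR^2$ for all sufficiently large $j$, which gives the upper bound.

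The only step that needs care is this smallness of $\bar{\varepsilon}_j$; the rest is routine algebra.
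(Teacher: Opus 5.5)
Your proof is correct and follows essentially the same route as the paper. The lower bound comes from dropping nonnegative terms, and the upper bound reduces to $\bar{\varepsilon}_j\le LR^2$, which you derive as the paper does from $\hat{h}_j\le h_{j+1}$ and $h(n)=o(\sqrt{n})$; the only cosmetic difference is that you square first and check $(1+s)^2\le 4$, whereas the paper bounds $4\bar{\varepsilon}_j/L\le 4R\sqrt{\bar{\varepsilon}_j/L}$ before squaring.
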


\begin{proof}
    Since \( r^2(\varepsilon_j) \geq 4 R \sqrt{\varepsilon_j / L} \), the lower bound is immediate.
    For the upper bound, we notice that
    \[
        \bar{\varepsilon}_j = B \frac{\hat{h}_j}{\sqrt{N_j}} \leq B \frac{h_{j+1}}{\sqrt{N_j}}= \sqrt{2} B \frac{h_{j+1}}{\sqrt{N_{j+1}}}
        % = \sqrt{2} \varepsilon_{\alpha}(N_{j+1}) 
        \to 0, \quad \text{as } j \to \infty.
    \]
    Therefore, for sufficiently large \( j \), we have \( \bar{\varepsilon}_j \leq LR^2 \) and
    \[
        r^2(\bar{\varepsilon}_j) = 4R\sqrt{\frac{\bar{\varepsilon}_j}{L}} + \frac{4\bar{\varepsilon}_j}{L}
        \leq 8R \sqrt{\frac{\bar{\varepsilon}_j}{L}}.
    \]
    Hence, the desired upper bound holds for sufficiently large \( j \).
\end{proof}

\subsection{Greedy step-mass blocks}
\label{appendixsub:greedy-step-mass}

Now we define the accumulated stepsize over the \( j \)-th dyadic epoch as
\[
    H_j := \sum_{t=N_j}^{N_{j+1} - 1} \eta_t.
\]
We consider dividing the \( j \)-th epoch into several sub-blocks: let \( \tau_j > 0 \) be a tolerance
parameter, and define \( e_k \) as the smallest integer such that
\[
    H_j^{(k)} := \sum_{t=e_{k-1}}^{e_k - 1} \eta_t \geq \tau_j, \quad e_0 := N_j.
\]
We stop when the remaining step mass is less than \( \tau_j \). Let \( K \) be the number of complete
sub-blocks and \( m_k := e_k - e_{k-1} \) be the length of the \( k \)-th sub-block. Then we have
the following lemma.

\begin{lemma} \label{lem:sub-block-algebra}
    For sufficiently large \( j \in \mathbb{N}_{+} \), if \(\max_{N_j \leq t < N_{j+1}} \eta_t \leq \tau_j/4\), we have
    \[
        \tau_j \leq H_{j}^{(k)} \leq \frac{5}{4} \tau_j, \quad \sum_{k=1}^{K} m_k \leq N_j.
    \]
    Note that \( H_j = \sum_{t=N_j}^{N_{j+1} - 1} \eta_t \). We also have
    \[
        K \geq \frac{4}{5} \cdot \frac{H_j}{\tau_j} - \frac{4}{5}.
    \]
    In particular, if \( H_j \geq 2\tau_j \), then \( K \geq (2 H_j) / (5 \tau_j) \).
\end{lemma}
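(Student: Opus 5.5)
Each sub-block is built by adding steps one at a time until its mass first reaches $\tau_j$. So the block mass exceeds $\tau_j$ by at most one stepsize, and one stepsize is at most $\tau_j/4$ by hypothesis. The length bound and the lower bound on $K$ then follow by counting mass over the epoch $[N_j,N_{j+1})$. The plan has three steps.

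\textbf{Step 1: per-block mass.} By minimality of $e_k$, the partial sum $\sum_{t=e_{k-1}}^{e_k-2}\eta_t<\tau_j$. Adding the last term $\eta_{e_k-1}\le\tau_j/4$ gives
\[
\tau_j\le H_j^{(k)}<\tau_j+\tau_j/4=\tfrac54\tau_j .
\]
The lower bound holds by the defining condition.

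\textbf{Step 2: total length.} The blocks are formed inside the epoch; we only form a complete block while enough mass remains in $[e_{k-1},N_{j+1})$. Hence every complete block satisfies $e_K\le N_{j+1}$. Summing the block lengths telescopes:
\[
\sum_{k=1}^K m_k=e_K-N_j\le N_{j+1}-N_j=N_j .
\]

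\textbf{Step 3: lower bound on $K$.} After the $K$ complete blocks, the leftover mass $\sum_{t=e_K}^{N_{j+1}-1}\eta_t$ is less than $\tau_j$; this is the stopping rule. Therefore
\[
H_j=\sum_{k=1}^K H_j^{(k)}+\text{remainder}<\tfrac54K\tau_j+\tau_j ,
\]
and rearranging gives $K>\tfrac45\,H_j/\tau_j-\tfrac45$. If $H_j\ge2\tau_j$, then $\tfrac45\le\tfrac25\,H_j/\tau_j$, so $K\ge\tfrac45\,H_j/\tau_j-\tfrac25\,H_j/\tau_j=\tfrac25\,H_j/\tau_j$.

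The phrase ``for sufficiently large $j$'' plays no essential role here: the argument uses only the step-size hypothesis, which presumably is checked later using Lemma~\ref{lem:deminishing-stepsize}. The only point needing care is the convention that the remainder is the strictly-less-than-$\tau_j$ tail and that complete blocks never cross $N_{j+1}$. Given that convention, the proof is routine bookkeeping.
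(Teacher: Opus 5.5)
Your proposal is correct and follows the paper's proof essentially step for step. Minimality of $e_k$ plus $\eta_t\le\tau_j/4$ gives $H_j^{(k)}\le\frac54\tau_j$, containment of the blocks in the epoch gives $\sum_k m_k\le N_j$, and the stopping rule (remaining mass below $\tau_j$) gives $H_j\le\frac54K\tau_j+\tau_j$. Your remark that ``sufficiently large $j$'' only serves to verify the step-size hypothesis (via Lemma~\ref{lem:deminishing-stepsize} and $\tau_j\to\infty$) also matches how the paper uses it.
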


\begin{proof}
    \( \sum_{k=1}^{K} m_k \leq N_j \) is trivial since the sub-blocks are disjoint and contained in
    the \( j \)-th epoch. By the definition of \( e_k \), \( H_j^{(k)} \geq \tau_j \) for each \( k \).
    Since \( \eta_t \to 0 \) as \( t \to \infty \), we have \( \eta_t \leq \tau_j / 4 \) for sufficiently
    large \( j \) and \( t \in [N_j, N_{j+1}) \). Therefore, the minimality of \( e_k \) implies that
    the stepsize mass before the last step in the \( k \)-th sub-block is less than \( \tau_j \). Meanwhile,
    the last step contributes at most \( \tau_j / 4 \). Hence, \( H_j^{(k)} \leq 5\tau_j / 4 \).
    Finally, since
    \[
        \sum_{k=1}^{K} H_j^{(k)} \leq H_j \leq \sum_{k=1}^{K} H_j^{(k)} + \tau_j,
    \]
    we have
    \[
        H_j \leq \frac{5}{4} K \tau_j + \tau_j \implies K \geq \frac{4}{5} \cdot \frac{H_j}{\tau_j} - 
        \frac{4}{5}.
    \]
    If \( H_j \geq 2\tau_j \), then \( K \geq (2 H_j) / (5 \tau_j) \).
\end{proof}

Now we set \( \kappa_0 = 32/5 \), and \( \tau_j := \kappa_0 R^2 / (L \beta_j^2) \). 
Since \( \beta_j \to 0 \), we have \( \tau_j \to \infty \). Together with 
Lemma~\ref{lem:deminishing-stepsize}, this guarantees
\( \max_{N_j \leq t<N_{j+1}}\eta_t \leq \tau_j/4 \) for all sufficiently large \( j \).

\begin{lemma} \label{lem:block-noise-constraint}
    For a sufficiently large \( j \in \mathbb{N}_{+} \) with \( H_j / \tau_j \geq 2 \), we define
    \( G_k := \sum_{t=e_{k-1}}^{e_k - 1} \eta_t \zeta_t \), \( V_k := \operatorname{Var}(G_k) \).
    Then the variables \( G_1, \dots, G_K \) are independent
    centered Gaussian, and
    \[
        \mathcal{C} \subset \bigcap_{k=1}^{K} \{ |G_k| \leq 4 \beta_j \}.
    \]
    Moreover, we have
    \[
        \frac{1}{K} \sum_{k=1}^{K} \frac{\beta_j^2}{V_k} \leq \frac{25}{\vartheta} B \frac{\hat{h}_j
        \sqrt{N_j}}{\sigma^2 H_j}.
    \]
\end{lemma}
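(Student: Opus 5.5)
The plan is to prove the three assertions separately. Each rests on the block decomposition of Lemma~\ref{lem:sub-block-algebra}, which applies for large $j$ because $\max_{N_j\le t<N_{j+1}}\eta_t\le\tau_j/4$ eventually, and the hypothesis $H_j/\tau_j\ge 2$ then gives $K\ge 2H_j/(5\tau_j)$.

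\emph{Independence.} Each $G_k=\sum_{t=e_{k-1}}^{e_k-1}\eta_t\zeta_t$ is a deterministic linear combination of the independent centered Gaussians $\zeta_t$. The index ranges $[e_{k-1},e_k)$ are disjoint, and the block endpoints $e_k$ are determined by the deterministic schedule alone. Hence $G_1,\dots,G_K$ are jointly Gaussian, centered and independent, with $V_k=\nu^2\sum_{t=e_{k-1}}^{e_k-1}\eta_t^2$.

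\emph{Inclusion.} Summing the recursion $Y_{t+1}=Y_t-\eta_t(\phi'(Y_t)+\zeta_t)$ over block $k$ gives
\[
G_k=Y_{e_{k-1}}-Y_{e_k}-\sum_{t=e_{k-1}}^{e_k-1}\eta_t\phi'(Y_t).
\]
All indices involved lie in $[N_j,N_{j+1}]$, where $\phi(Y_n)\le\varepsilon_\alpha(n)\le\bar\varepsilon_j$ on $\mathcal{C}$ for $j$ large. This is the definition of $\hat h_j$ together with the eventual bound $\varepsilon_\alpha(n)\le Bh(n)/\sqrt n$. It follows that $|Y_n|\le\beta_j$ for these indices. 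Lemma~\ref{lem:geometry-phi} then gives $|\phi'(Y_t)|\le L\beta_j^3/(4R^2)$, and using $H_j^{(k)}\le\frac54\tau_j$ we obtain
\[
|G_k|\le 2\beta_j+\tfrac54\tau_j\frac{L\beta_j^3}{4R^2}=2\beta_j+\tfrac{5\kappa_0}{16}\beta_j=4\beta_j,
\]
since $\kappa_0=32/5$. This is exactly why that constant was chosen.

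\emph{Variance average.} By Cauchy--Schwarz, $V_k\ge\nu^2(H_j^{(k)})^2/m_k\ge\nu^2\tau_j^2/m_k$. Summing, and using $\sum_k m_k\le N_j$ and $K\ge 2H_j/(5\tau_j)$,
\[
\frac1K\sum_{k=1}^K\frac{\beta_j^2}{V_k}\le\frac{\beta_j^2N_j}{K\nu^2\tau_j^2}\le\frac{5\beta_j^2N_j}{2H_j\nu^2\tau_j}.
\]
Next, $\beta_j^2/\tau_j=L\beta_j^4/(\kappa_0R^2)\le 64\bar\varepsilon_j/\kappa_0=10\bar\varepsilon_j$, by the upper bound in Lemma~\ref{lem:tolerance-bound}. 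Substituting $\nu^2=\vartheta\sigma^2$ and $\bar\varepsilon_j=B\hat h_j/\sqrt{N_j}$ yields $25B\hat h_j\sqrt{N_j}/(\vartheta\sigma^2H_j)$, as claimed.

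The main point needing care is the inclusion step. One must check that every iterate index $t\in[e_{k-1},e_k]$ lies in the epoch window $[N_j,N_{j+1}]$ where the $\bar\varepsilon_j$ envelope applies. This includes the endpoint $e_K\le N_{j+1}$, which holds by construction. One must also check that "sufficiently large $j$" simultaneously covers the eventual envelope, Lemma~\ref{lem:tolerance-bound}, and the small-step condition. The rest is bookkeeping.
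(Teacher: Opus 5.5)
Your proposal is correct and follows the paper's proof essentially step for step. It uses the same telescoped block identity, with the drift bounded by $\tfrac54\tau_j\cdot L\beta_j^3/(4R^2)=2\beta_j$, and the same Cauchy--Schwarz bound $V_k\ge\nu^2\tau_j^2/m_k$, combined with $K\ge 2H_j/(5\tau_j)$ and the upper bound of Lemma~\ref{lem:tolerance-bound}. Your remarks that the endpoints $e_k$ are deterministic (so the $G_k$ are genuinely independent Gaussians) and that all indices lie in $[N_j,N_{j+1}]$ make explicit what the paper leaves implicit.
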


\begin{proof}
    Independence and zero mean are trivial since the \( G_k \)'s are linear combinations of disjoint sets of
    independent centered Gaussian variables. Notice that
    \[
        Y_{e_k} - Y_{e_{k-1}} = -\sum_{t=e_{k-1}}^{e_k - 1} \eta_t (\phi'(Y_t) + \zeta_t)
        = -\sum_{t=e_{k-1}}^{e_k - 1} \eta_t \phi'(Y_t) - G_k.
    \]
    Now we control the deterministic term. By Lemma~\ref{lem:geometry-phi}, on the event \( \mathcal{C} \),
    we have
    \[
        \bigg| \sum_{t=e_{k-1}}^{e_k - 1} \eta_t \phi'(Y_t) \bigg| \leq \sum_{t=e_{k-1}}^{e_k - 1} \eta_t |\phi'(Y_t)|  \leq \frac{L \beta_j^3}{4 R^2} H_j^{(k)}
        \leq \frac{5}{16} \kappa_0 \beta_j = 2 \beta_j.
    \]
    Since \( |Y_{e_{k-1}}|, |Y_{e_k}| \leq \beta_j \) on the event \( \mathcal{C} \), we have
    \[
        |G_k| \leq |Y_{e_k} - Y_{e_{k-1}}| + \bigg| \sum_{t=e_{k-1}}^{e_k - 1} \eta_t \phi'(Y_t) \bigg|
        \leq 4 \beta_j.
    \]
    Hence, \( \mathcal{C} \subset \bigcap_{k=1}^{K} \{ |G_k| \leq 4 \beta_j \} \).

    Finally, by Cauchy--Schwarz, we have
    \[
        V_k = \nu^2 \sum_{t=e_{k-1}}^{e_k - 1} \eta_t^2 \geq \frac{\nu^2 (H_j^{(k)})^2}{m_k} \geq
        \frac{\vartheta \sigma^2 \tau_j^2}{m_k}.
    \]
    Using the fact that \( \sum_{k=1}^{K} m_k \leq N_j \) and \( K \geq (2 H_j) / (5 \tau_j) \), we get
    \begin{align*}
        \frac{1}{K} \sum_{k=1}^{K} \frac{\beta_j^2}{V_k} &\leq \frac{\beta_j^2}{K} \sum_{k=1}^{K}
        \frac{m_k}{\vartheta \sigma^2 \tau_j^2} 
        \leq \frac{\beta_j^2 N_j}{\vartheta \sigma^2 K \tau_j^2} \\
        &\leq \frac{5 \tau_j \beta_j^2 N_j}{2 \vartheta \sigma^2  H_j \tau_j^2}
        = \frac{5 \beta_j^2 N_j}{2 \vartheta \sigma^2 H_j \tau_j} \\
        &= \frac{5 L N_j}{2 \vartheta \sigma^2 H_j} \cdot \frac{1}{\kappa_0 R^2} \cdot \beta_j^4
    \end{align*}
    Now use the upper bound in Lemma~\ref{lem:tolerance-bound} to conclude
    \[
        \frac{1}{K} \sum_{k=1}^{K} \frac{\beta_j^2}{V_k}
        \leq \frac{5 L N_j}{2 \vartheta \sigma^2 H_j} \cdot \frac{1}{\kappa_0 R^2}
        \cdot \frac{64 R^2 \bar{\varepsilon}_j}{L} = \frac{25}{\vartheta} B \frac{\hat{h}_j \sqrt{N_j}}
        {\sigma^2 H_j}.
    \]
\end{proof}

Now we are ready to prove Lemma~\ref{lem:dyadic-epoch-ceiling}.

\begin{proof}[Proof of Lemma~\ref{lem:dyadic-epoch-ceiling}]
    We use contradiction. Suppose
    \[
        H_j > C_b \frac{B}{\sigma^2} \frac{\hat{h}_j \sqrt{N_j}}{j}.
    \]
    Then by the definition of \( \tau_j \) and Lemma~\ref{lem:tolerance-bound}, we have
    \begin{align*}
        \frac{H_j}{\tau_j} &= H_j \cdot \frac{L \beta_j^2}{\kappa_0 R^2} \\
        &\geq C_b \frac{B}{\sigma^2} \frac{\hat{h}_j \sqrt{N_j}}{j} \cdot \frac{L \beta_j^2}{\kappa_0 R^2} \\
        &\geq C_b \frac{B}{\sigma^2} \frac{\hat{h}_j \sqrt{N_j}}{j} \cdot \frac{L}{\kappa_0 R^2}
        \cdot 4 R \sqrt{\frac{\bar{\varepsilon}_j}{L}} \\
        &= \frac{4 C_b B^{3/2}}{\kappa_0} \frac{L^{1/2}}{R \sigma^2} \frac{\hat{h}_j^{3/2} N_j^{1/4}}{j}
        \to \infty, \quad \text{as } j \to \infty.
    \end{align*}
    Then for sufficiently large \( j \), we have \( H_j / \tau_j \geq 2 \). Then 
    Lemma~\ref{lem:block-noise-constraint} applies, and we have
    \[
        \mathcal{C} \subset \bigcap_{k=1}^{K} \{ |G_k| \leq 4 \beta_j \},
    \]
    and
    \[
        \frac{1}{K} \sum_{k=1}^{K} \frac{(4 \beta_j)^2}{V_k} \leq \frac{400}{\vartheta} 
        B \frac{\hat{h}_j \sqrt{N_j}}{\sigma^2 H_j}.
    \]
    By Markov's inequality, this implies that there exists at least \( K/2 \) sub-blocks with
    \[
        \frac{(4 \beta_j)^2}{V_k} \leq \frac{800}{\vartheta} B \frac{\hat{h}_j \sqrt{N_j}}{\sigma^2 H_j}
        \leq \gamma_0 j, \quad j \in \mathcal{G}_j, |\mathcal{G}_j| \geq K/2.
    \]
    where the last inequality follows from the contradiction assumption and the definition of \( C_b \).

    Also, Lemma~\ref{lem:sub-block-algebra} gives \( K \geq (2 H_j) / (5 \tau_j) \). Therefore, we have
    \[
        K \geq \frac{C_b B^{3/2}}{4} \frac{L^{1/2}}{R \sigma^2} \frac{\hat{h}_j^{3/2} N_j^{1/4}}{j}.
    \]
    Since
    \[
        \mathcal{C} \subset \bigcap_{k=1}^{K} \{ |G_k| \leq 4 \beta_j \} \subset
        \bigcap_{k \in \mathcal{G}_j} \{ |G_k| \leq 4 \beta_j \},
    \]
    we have
    \begin{align*}
        \mathbb{P}(\mathcal{C}) &\leq \prod_{k \in \mathcal{G}_j} \mathbb{P}(|G_k| \leq 4 \beta_j) \\
        &= \prod_{k \in \mathcal{G}_j} \mathbb{P} \bigg( \frac{|G_k|}{\sqrt{V_k}} \leq 
        \frac{4 \beta_j}{\sqrt{V_k}} \bigg) \\
        &\leq \prod_{k \in \mathcal{G}_j} \mathbb{P} \bigg( |Z| \leq \sqrt{\gamma_0 j} \bigg) \\
        &= \prod_{k \in \mathcal{G}_j} (1 - 2 \Phi(-\sqrt{\gamma_0 j})) \\
        &\leq \exp(-|\mathcal{G}_j| \cdot 2 \Phi(-\sqrt{\gamma_0 j})) \\
        &\leq \exp\bigg( - K \cdot \frac{\sqrt{\gamma_0 j}}{1 + \gamma_0 j} \frac{1}{\sqrt{2} \pi}
        \mathrm{e}^{-\gamma_0 j/2} \bigg),
    \end{align*}
    where the last inequality uses the Mills lower bound
    \[
        \Phi(-x) \geq \frac{x}{1+x^2} \frac{1}{\sqrt{2\pi}} \mathrm{e}^{-x^2/2}, \quad x > 0.
    \]
    Now substituting the lower bound of \( K \) gives
    \[
        \mathbb{P}(\mathcal{C}) \leq \exp \bigg( -\frac{C_b B^{3/2}}{4 \sqrt{2} \pi} \frac{L^{1/2}}{R \sigma^2}
        \cdot \frac{\sqrt{\gamma_0} \hat{h}_j^{3/2} 2^{j/8}}{j^{1/2}(1 + \gamma_0 j)} \bigg) \to 0,
        \quad \text{as } j \to \infty.
    \]
    This contradicts the assumption that \( \mathbb{P}(\mathcal{C}) \geq 1 - \alpha \). Hence, we must have
    \[
        H_j \leq C_b \frac{B}{\sigma^2} \frac{\hat{h}_j \sqrt{N_j}}{j},
    \]
    for all sufficiently large \( j \).
\end{proof}

\subsection{Dyadic sequence estimates}
\label{appendixsub:dyadic-sequence-estimates}

Our main focus here is to prove Lemma~\ref{lem:step-upper-bound} and \ref{lem:energy-amplification}.

\begin{proof} [Proof of Lemma~\ref{lem:step-upper-bound}]
    Note that \( H_j = S_{N_{j+1}} - S_{N_j} \). By Lemma~\ref{lem:dyadic-epoch-ceiling}, there
    exists \( j_0 \in \mathbb{N}_{+} \) such that
    \[
        \frac{H_j}{\sqrt{N_j}} = \sqrt{2} p_{j+1} - p_j \leq C_b \frac{B}{\sigma^2} \frac{\hat{h}_j}{j},
        \quad \forall j \geq j_0.
    \]
    The above recursion implies that
    \[
        p_j \leq 2^{-(j-j_0)/2} p_{j_0} + \sum_{i=j_0}^{j-1} 2^{-(j-i)/2} C_b \frac{B}{\sigma^2} 
        \frac{\hat{h}_i}{i}.
    \]
    Note that for \(j > i\), we have \( \hat{h}_i \leq h_j \). Now we split the summation into two parts:
    \begin{align*}
        p_j &\leq 2^{-(j-j_0)/2} p_{j_0} + \sum_{i=j_0}^{\lfloor j/2 \rfloor - 1} 2^{-(j-i)/2} 
        C_b \frac{B}{\sigma^2} \frac{h_j}{i} + \sum_{i=\lfloor j/2 \rfloor}^{j-1} 2^{-(j-i)/2} C_b 
        \frac{B}{\sigma^2} \frac{h_j}{i} \\
        &\leq 2^{-(j-j_0)/2} p_{j_0} + \frac{j}{j_0} 2^{-j/4} C_b \frac{B}{\sigma^2} h_j + 
        C_b \frac{B}{\sigma^2} \frac{2 h_j}{j(1 - 2^{-1/2})}. 
    \end{align*}
    It is straightforward to verify that \( j 2^{-j/4} = o(1/j) \). Thus, the first two terms can be absorbed into
    the last term multiplied by a constant. Therefore, there exists a constant \( C_p \) such that
    \[
        p_j \leq C_p \frac{B}{\sigma^2} \frac{h_j}{j}.
    \]
    By Corollary~\ref{cor:step-lower-bound}, we have
    \[
        \mathcal{Q}_j \leq \frac{64 B}{z_{\alpha}^2 \sigma^2} p_j h_j
        \leq C_q \frac{B^2}{\sigma^4} \frac{h_j^2}{j},
    \]
    where \( C_q \) is a constant.
\end{proof}

\begin{proof} [Proof of Lemma~\ref{lem:energy-amplification}]
    Consider the recursion
    \[
        \mathcal{Q}_{j+1} - \mathcal{Q}_j = \sum_{t=N_j}^{N_{j+1}-1} \eta_t^2 \geq \frac{H_j^2}{N_j}.
    \]
    Note that \( H_j / \sqrt{N_j} = \sqrt{2} p_{j+1} - p_j \). Then we have
    \begin{align*}
        \mathcal{Q}_J - \mathcal{Q}_m &\geq \sum_{j=m}^{J-1} (\sqrt{2} p_{j+1} - p_j)^2 \\
        &= \sum_{j=m}^{J-1} (2 p_{j+1}^2 + p_j^2 - 2 \sqrt{2} p_j p_{j+1}) \\
        &\geq \sum_{j=m}^{J-1} ((2 - \sqrt{2}) p_{j+1}^2 + (1 - \sqrt{2}) p_j^2) \\
        &= (2 - \sqrt{2}) p_J^2 + (1 - \sqrt{2}) p_m^2 + (\sqrt{2} - 1)^2 \sum_{j=m+1}^{J-1}
        p_j^2 \\
        &\geq (1 - \sqrt{2}) p_m^2 + (\sqrt{2} - 1)^2 \sum_{j=m+1}^{J-1} p_j^2.
    \end{align*}
    Therefore, we have
    \[
        \mathcal{Q}_J \geq (2 - \sqrt{2}) \mathcal{Q}_m + (\sqrt{2} - 1)^2 \sum_{j=m+1}^{J-1} p_j^2.
    \]
    Then by Corollary~\ref{cor:step-lower-bound}, we have
    \[
        \mathcal{Q}_J \geq (2 - \sqrt{2}) \mathcal{Q}_m + (\sqrt{2} - 1)^2 \frac{z_{\alpha}^4 \sigma^4}
        {2^{12} B^2} \sum_{j=m+1}^{J-1} \frac{\mathcal{Q}_j^2}{h_j^2}.
    \]
\end{proof}